\numberwithin{equation}{section}
\numberwithin{figure}{section}
\theoremstyle{plain}
\newtheorem{thm}{\protect\theoremname}
\theoremstyle{definition}
\newtheorem{defn}[thm]{\protect\definitionname}
\theoremstyle{definition}
\newtheorem*{example*}{\protect\examplename}
\theoremstyle{plain}
\newtheorem{prop}[thm]{\protect\propositionname}
\theoremstyle{plain}
\newtheorem{lem}[thm]{\protect\lemmaname}
\theoremstyle{plain}
\newtheorem{cor}[thm]{\protect\corollaryname}
\theoremstyle{definition}
\newtheorem{example}[thm]{\protect\examplename}
\def\paragraph{\@startsection{paragraph}{4}%
  \z@\z@{-\fontdimen2\font}%
  {\normalfont\bfseries}}
\providecommand{\corollaryname}{Corollary}
\providecommand{\definitionname}{Definition}
\providecommand{\examplename}{Example}
\providecommand{\lemmaname}{Lemma}
\providecommand{\propositionname}{Proposition}
\providecommand{\theoremname}{Theorem}
\begin{document}
\title[Bases of $U_{q}\left(\mathfrak{g}\right)$'s in Lyndon words]{Bases of quantum group algebras in terms of Lyndon words}
\author{Eremey Valetov}
\date{September 22, 2016}
\begin{abstract}
We have reviewed some results on quantized shuffling, and in particular,
the grading and structure of this algebra. In parallel, we have summarized
certain details about classical shuffle algebras, including Lyndon
words (primes) and the construction of bases of classical shuffle
algebras in terms of Lyndon words. We have explained how to adapt
this theory to the construction of bases of quantum group algebras
in terms of Lyndon words. This method has a limited application to
the specific case of the quantum group parameter being a root of unity,
with the requirement that specialization to the root of unity is non-restricted.
As an additional, applied part of this work, we have implemented a
Wolfram Mathematica package with functions for quantum shuffle multiplication
and constructions of bases in terms of Lyndon words.
\end{abstract}

\address{Faculté de Mathématiques, Université Pierre \& Marie Curie (Paris
VI), 4 place Jussieu, 75252 Paris CEDEX 05, France. Also at Michigan
State University, East Lansing, MI 48824, USA, and at Institut für
Kernphysik, Forschungszentrum Jülich, Germany.}
\keywords{quantum group algebras, quantized shuffling, bases, Lyndon words}
\subjclass[2000]{17B37, 81R50.\\
\emph{Physics and Astronomy Classification Scheme.}
02.20.Uw.}

\maketitle
\tableofcontents{}

\section{Introduction}

In \cite{Radford}, D. Radford has developed the classical shuffle
algebra, proposed a method of constructing its bases in terms of Lyndon
words (primes), and explained how this can be applied to commutative
pointed irreducible Hopf algebras.

A common object of shuffle algebras is a tensor space $T\left(V\right)=\oplus_{n=0}^{+\infty}V^{\otimes n}$,
where $V$ is a module. Consider an element $v_{a}=v_{x_{1}}\otimes\cdots\otimes v_{x_{n}}$
of this tensor space. Let $\left\{ v_{x}|x\in X\right\} $ be an indexed
basis of $V$ and $S=\left(X\right)$ the free semigroup generated
by $X$. We establish a notation for the $a$ in $v_{a}$ that is
bijective to the vector $\left(x_{1},\cdots,x_{n}\right)$ by setting
$a=x_{1}\cdots x_{n}\in S$. This $a$ is called a word, and may be
viewed as a nonempty string of characters.

If the set $X$ is totally ordered with some relation $\le$, we can
extend it to $S$ as a lexicographic order. Lexicographic order has
multiple applications; for example, a variant of lexicographical order
is used the set of real numbers $\mathbb{R}$ in decimal notation,
and strings in computer science are commonly lexicographically compared.

In 1954, R. Lyndon has investigated a special type of words called
standard lexicographic sequences or regular words \cite{lyndon},
which were later also named Lyndon words and primes. If $a\in S$,
then $a$ is called a Lyndon word if for any its factorization $uv=a$
with $u,v\in S$ we have $v<a$. Any word in $S$ can be factorized
into Lyndon words, and this factorization is unique.

Shuffle algebras use a special multiplication operation called shuffle
multiplication. Shuffle multiplication of $v_{a}$ and $v_{b}$, where
$a,b\in S$, is defined as the sum of elements of the tensor space
that correspond to a special type of permutation that may be called
a $\left(m_{0},\cdots,m_{r}\right)$-shuffle applied to $a$ and $b$.
We will cover this in detail later on.

To construct bases of $T\left(V\right)$ in terms of Lyndon words,
we define $X_{a}$, which is a shuffle multiplication of elements
corresponding to Lyndon words in the unique prime factorization of
$a$. The $X_{a}$'s form a linear basis of $T\left(V\right)$.

M. Rosso has introduced the quantum shuffle algebra \cite{MRosso},
which can be viewed as a quantized version of the classical shuffle
algebra. Rosso described the construction of its bases in terms of
Lyndon words. A quantum shuffle algebra is constructed from the cotensor
Hopf algebra of a Hopf bimodule, and this applies to the Hopf subalgebra
$U_{q}^{+}\left(\mathfrak{g}\right)$ of a quantum group algebra $U_{q}\left(\mathfrak{g}\right)$.

In the case of quantum shuffle algebras, the quantum deformation manifests
itself through permutations of the form $s\left(v\otimes w\right)=w\otimes v$
being replaced by diagonal braiding of the form $\sigma\left(v\otimes w\right)=q_{ij}\left(w\otimes v\right)$
in the implementation of $\left(m_{0},\cdots,m_{r}\right)$-shuffles.

In the same paper \cite{MRosso}, Rosso has discussed how the quantum
shuffle algebra theory can be applied to the Hopf bimodules $U_{q}^{+}\left(\mathfrak{g}\right)$.
This application may not be immediately obvious to a reader.

We would like to illuminate the path of familiarization with the method
of bases construction in terms of Lyndon words for classical shuffle
algebras, quantum shuffle algebras, and how this method can be applied
to quantum group algebras. Along the way, we wish to implement quantum
shuffle multiplication and the corresponding method of bases construction
as a Wolfram Mathematica package (function library).

We recognize that an important issue in research is setting priorities
and specializing in a number of chosen subject matter areas. With
shuffle algebras being the subject matter of specialization in this
work, we honor this concept by deriving each proof we provide in this
paper on our own. On the level of formulations of mathematical propositions,
our aim is to preserve precision and legacy.

We will use the terms 'Lyndon word' and 'prime' interchangeably based
on our perception of their common usage in various context. In the
broad picture, we prefer to use the term 'Lyndon word'\footnote{A trivial paragaph intended for internal audience was deleted here.}.

\section{Literature review}

We consider the papers by Radford and Rosso \cite{Radford,MRosso}
mentioned in the introduction to be the primary references for our
work, and we will now briefly outline their contents.

In ``A natural ring basis for the shuffle algebra and an application
to group schemes'' \cite{Radford}, Radford (1) defines the shuffle
algebra of a set $X$, including the shuffle multiplication; (2) defines
$U$-graded algebra (coalgebra), with $u\left(x\right)$ as the number
of $x$'s in the factorization of $a\in S$; (3) shows how commutative
pointed irreducible Hopf algebras may be embedded into classical shuffle
algebras; (4) details the structure of the Lyndon words of $S$; (5)
discusses prime factorization and shuffles; (6) presents a method
of constructing bases of classical shuffle algebras in terms of Lyndon
words; and (7) covers some applications and other details.

In ``Quantum groups and quantum shuffles'' \cite{MRosso}, Rosso
(1) discusses the Hopf bimodule structure; (2) specifies a braiding
in the category of Hopf bimodules that was introduced by Woronowicz
\cite{woronowicz}; (3) constructs the cotensor Hopf algebra from
a Hopf bimodule, including the quantum shuffle multiplication; (4)
shows an embedding of a tensor space $T\left(V\right)$ in the cotensor
Hopf algebra; (5) introduces the quantum symmetric algebra that is
a sub-Hopf algebra of the cotensor Hopf algebras; (6) details the
universal construction of a quantum shuffle algebra in the braid category;
(7) gives examples from abelian group algebras; (8) details the structure
of Lyndon words and a method of bases construction of quantum shuffle
algebras in terms of Lyndon words; (9) covers consequences of growth
conditions; and (10) presents a theory for and the applications of
the inductive construction of higher rank quantized enveloping algebras.

M. Sweedler's ``Hopf algebras'' \cite{sweedler} is useful as a
review of Hopf algebras in that it provides thorough coverage of relevant
concepts such as comodules and coinvariants. ``Quantum groups''
\cite{kassel} by C. Kassel introduces crossed modules and modules
over the quantum (or, ``Drinfel'd'') double. Additional references
on the quantum double are ``Quantum groups'' \cite{drinfeld} by
V. Drinfel'd and ``Doubles of quasitriangular Hopf algebras'' \cite{majid}
by S. Majid. For notations and facts about the symmetric group, one
may refer to ``Groupes et algèbres de Lie'' \cite{bourbaki} by
N. Bourbaki. ``Braid groups'' \cite{braidgroups} by C. Kassel and
V. Turaev is a good introduction to the Artin braid groups, which
is relevant to the study of quantum shuffle algebras.

Key references used in \cite{MRosso} by Rosso are ``Differential
calculus on compact matrix pseudogroups (quantum groups)'' \cite{woronowicz}
by S. Woronowicz and ``Quantum groups and representations of monoidal
categories'' \cite{yetter} by D. Yetter. We found the material in
\cite{woronowicz} regarding braidings and braid equations to be illuminating.
In \cite{yetter}, the discussion of Hopf algebra structure, including
comodules and bimodules, somewhat parallels that in \cite{MRosso}.
The diagrammatic notation in \cite{yetter} is a real gem that enables
the reader to visually follow through Hopf algebra calculations and
evolutions.

For the purposes of coherency, rigor, and broad familiarization with
the subject matter, we have reviewed a number of papers on quantum
shuffle algebras, including ``Quantum quasi-shuffle algebras'' \cite{jian}
by R.-Q. Jian et al., ``Quantum symmetric algebras'' \cite{dechela1}
and ``Quantum symmetric algebras II'' \cite{dechela2} by D. de
Chela and J. Green, and ``Dual canonical bases, quantum shuffles
and $q$-characters'' by B. Leclerc.

Having understood classical and quantum shuffle algebras, we have
turned our attention to quantum group algebras. The following landmark
textbooks contain a wealth of information about quantum groups, including
basic definitions, universal $R$-matrices, and braidings: ``A guide
to quantum groups'' \cite{charipressley} by V. Chari and A. Pressley,
``Introduction to quantum groups'' \cite{lusztig} by G. Lusztig,
``Lectures on quantum groups'' \cite{jantzen} by J. Jantzen, and
``Quantum groups'' \cite{kassel} by C. Kassel.

The paper ``A formula for the $R$-matrix using a system of weight
preserving endomorphism'' \cite{tingley} by P. Tingley was useful
in finding the form of the standard universal $R$-matrix of a quantum
group algebra. While in general, the universal $R$-matrix is not
unique, ``The uniqueness theorem for the universal $R$-matrix''
\cite{khoroshin} by S. Khoroshin and V. Tolstoy is a relevant and
instructive text.

Next, we have researched the case of quantum groups with the quantum
parameter $q$ as a root of unity. We have used the authoritative
details in \cite{charipressley} as the main reference for that purpose.
``Quantum groups at roots of 1'' \cite{Lusztig1990} by G. Lusztig
is a historical reference on that topic. The study aid ``Quantum
groups at root of unity'' \cite{singh} by B. Singh is a concise
overview of the root of unity case.

We have used \cite{lusztig} and ``On the automorphisms of $U_{q}^{+}\left(\mathfrak{g}\right)$''
\cite{dumas} by N. Andruskiewitsch and F. Dumas as references to
identify that the positive part $U_{q}^{+}\left(\mathfrak{g}\right)$
of a quantum group algebra $U_{q}\left(\mathfrak{g}\right)$ is a
Nichols algebra with diagonal braiding. In ``A survey on Nichols
algebras'' \cite{takeuchi}, M. Takeuchi reviews the categorical
approach to Nichols algebras and their braided shuffle algebras aspect.
We returned to \cite{MRosso} to confirm and validate the conclusions
on application of the method of bases construction in terms of Lyndon
words to quantum group algebras.

\section{Quantum group algebras I}

\subsection{Quantum group algebras}

Since the primary objective of this work is to apply the shuffle algebra
theory to quantum group algebras, we will begin by defining quantum
group algebras and referring to the braiding relations of tensor products
of their modules. To some extent, there appears to be a convention
to provide a detailed definition of a quantum group algebra in papers
on this topic. The purpose of this is not only to remind the reader
of the relatively numerous relations, but also to advise about the
version or variation of the notation to be used in the specific paper.
For example, the generators denoted here as $e_{i}$ and $f_{i}$
are also referred to in some papers as $X_{i}^{+}$ and $X_{i}^{-}$
respectively.
\begin{defn}[Quantum group algebra -- in principle]
There are numerous related definitions of quantum group algebras.
V. Drinfel'd \cite{drinfeld} and M. Jimbo have formalized the definition
of a quantum group algebra as a Hopf algebra that is a deformation
of the universal enveloping algebra of a Kac-Moody algebra.
\end{defn}

Historically, the first application of quantum group algebras was
the quantum inverse scattering method in statistical mechanics in
the first half of the 1980s. Other applications include probability
theory, harmonic analysis, and number theory \cite{TwentyFive}.

\subsection{Drinfel'd-Jimbo type quantum group algebras}
\begin{defn}[Quantum group algebra -- formal\footnote{This definition was obtained from the Wikipedia page ``Quantum group''
at \url{https://en.wikipedia.org/wiki/Quantum_group} and carefully
compared with \cite{sam} and numerous other sources.}]
Let $A=\left(a_{ij}\right)$ be the Cartan matrix of a Kac-Moody
algebra and let $q\in\mathbb{C}$ such that $q\notin\left\{ 0,1\right\} $.
Let $\mathfrak{g}$ be a Kac-Moody algebra with the Cartan matrix
$A$. Then the quantum group $U_{q}\left(\frak{g}\right)$ is defined
as the unital associative algebra with Chevalley generators $k_{\lambda}$,
$e_{i}$, and $f_{i}$ as follows:

\begin{itemize}
\item $k_{0}=1$, $k_{\lambda}k_{\mu}=k_{\lambda+\mu}$
\item $k_{\lambda}e_{i}k_{\lambda}^{-1}=q^{\left(\lambda,\alpha_{i}\right)}e_{i}$,
$k_{\lambda}f_{i}k_{\lambda}^{-1}=q^{-\left(\lambda,\alpha_{i}\right)}f_{i}$
\item $\left[e_{i},f_{j}\right]=\delta_{ij}\frac{k_{i}-k_{i}^{-1}}{q_{i}-q_{j}^{-1}}$
\item if $i\ne j$, then $\sum_{n=0}^{1-a_{ij}}\left(-1\right)^{n}$$\frac{\left[1-a_{ij}\right]_{q_{i}}!}{\left[1-a_{ij}-n\right]_{q_{i}}!\left[n\right]_{q_{i}}!}e_{i}^{n}e_{j}e_{i}^{1-a_{ij}-n}=0$
and $\sum_{n=0}^{1-a_{ij}}\left(-1\right)^{n}$$\frac{\left[1-a_{ij}\right]_{q_{i}}!}{\left[1-a_{ij}-n\right]_{q_{i}}!\left[n\right]_{q_{i}}!}f_{i}^{n}f_{j}f_{i}^{1-a_{ij}-n}=0$
($q$-Serre relations)
\end{itemize}
where $\lambda$ is an element of the weight lattice, $\alpha_{i}$
are the simple roots, $\left(\,,\,\right)$ is an invariant symmetric
bilinear form, $k_{i}=k_{\alpha_{i}}$, $d_{i}=\left(\alpha_{i},\alpha_{i}\right)/2$
(making $B=\left(d_{i}^{-1}a_{ij}\right)$ symmetric), $q_{i}=q^{d_{i}}$,
$\left[0\right]_{q_{i}!}=1$, $\left[n\right]_{q_{i}}!=\prod_{m=1}^{n}\left[m\right]_{q_{i}}$
for all $n\in\mathbb{N}$ ($q$-factorial), and $\left[m\right]_{q_{i}}=\frac{q_{i}^{m}-q_{i}^{-m}}{q_{i}-q_{i}^{-1}}$
($q$-number).

To make it a Hopf algebra, we can define the counit as $\epsilon\left(k_{\lambda}\right)=1$
and $\epsilon\left(e_{i}\right)=\epsilon\left(f_{i}\right)=0$ and
select a compatible coproduct $\triangle$ and an antipode $S$.

\end{defn}

\begin{example*}[Common definition of coproduct]
{\scriptsize{}}The coproduct of a quantum group algebra $U_{q}\left(\frak{g}\right)$
is often defined by

\begin{itemize}
\item $\triangle\left(k_{\lambda}\right)=k_{\lambda}\otimes k_{\lambda}$
\item $\triangle\left(e_{i}\right)=1\otimes e_{i}+e_{i}\otimes k_{i}$
\item $\triangle\left(f_{i}\right)=k_{i}^{-1}\otimes f_{i}+f_{i}\otimes1$
\end{itemize}
Other definitions are possible and used.

\end{example*}

\paragraph*{The limit of $q\rightarrow1$}

In the $q\rightarrow1$ limit, the quantum group algebra $U_{q}\left(\frak{g}\right)$
relations approach universal enveloping algebra $U\left(\frak{g}\right)$
relations, with
\begin{itemize}
\item $k_{\lambda}\rightarrow1$
\item $\frac{k_{\lambda}-k_{-\lambda}}{q_{i}-q_{i}^{-1}}\rightarrow t_{\lambda}$
\end{itemize}
where $t_{\lambda}$ are elements of the Cartan subalgebra and $\left(t_{\lambda},h\right)=\lambda\left(h\right)$
for all elements $h$ in the Cartan subalgebra.

\paragraph*{Triangular decomposition}

Quantum group algebra $U_{q}\left(\mathfrak{g}\right)$ has a triangular
decomposition into negative $U^{+}$, neutral $U^{0}$, and positive
$U^{+}$ parts \cite[p. 109]{takeuchi}:

\[
U_{q}\left(\mathfrak{g}\right)=U^{-}\otimes U^{0}\otimes U^{+}
\]

\subsection{Representations of $U_{q}\left(\mathfrak{g}\right)$}

\paragraph*{Isomorphism of $U_{q}\left(\mathfrak{g}\right)$-modules {\footnotesize{}\cite{jantzen,charipressley,lusztig}}}

While for $U\left(\mathfrak{g}\right)$-modules $V$ and $W$ the
functorial isomorphism between $V\otimes W$ and $W\otimes V$ is
a flip map $P:v\otimes w\mapsto w\otimes v$, for $U_{q}\left(\mathfrak{g}\right)$
it is, in general, not an isomorphism. However, $U_{q}\left(\mathfrak{g}\right)$
is nearly quasitriangular, i.e. there exists an infinite formal sum
that plays the role of an $R$-matrix, which may be called the quasi
$R$-matrix \cite{lusztig}. Commonly, an alternative construction
called universal $R$-matrix is used \cite{Tingley_constructingthe,charipressley}.
A functorial isomorphism $R_{V,W}$ between $V\otimes W$ and $W\otimes V$
for $U_{q}\left(\mathfrak{g}\right)$-modules $V$ and $W$ is defined
as $R_{V,W}:v\otimes w\mapsto R\left(w\otimes v\right)$.

\paragraph*{Action of a braid group on $U_{q}\left(\mathfrak{g}\right)$-rep
\cite[p. 276]{charipressley}}

If $V$ is a $U_{q}\left(\mathfrak{g}\right)$-module, action of the
universal $R$-matrix on $V^{\otimes3}$ satisfies the Yang-Baxter
equation
\[
\left(R\otimes1\right)\left(1\otimes R\right)\left(R\otimes1\right)=\left(1\otimes R\right)\left(R\otimes1\right)\left(1\otimes R\right)
\]
But this is also the braiding equation for the action of a braid
group $B_{n}$ on $V^{\otimes n}$. The $R$-matrix defines the representation
of the braid group $B_{n}$ that acts on $V^{\otimes n}$ as functorial
isomorphisms.

\section{Common framework}

In this section, we will lay out the common foundation for classical
and quantum shuffle algebras. Having this done, we will only need
to address the details specific to each case in the subsequent sections.\pagebreak{}

\subsection{Tensor space $T\left(V\right)$}
\begin{defn}[Tensor space $T\left(V\right)$]

For a context-specific vector space $V$ and $n\in\mathbb{N}$, we
define tensor space $T\left(V\right)$ as 
\[
T\left(V\right)=\oplus_{k=0}^{+\infty}V^{\otimes k}
\]
\end{defn}

In \cite{Radford}, the classical shuffle algebra is defined such
that it is $T\left(V\right)$ as a vector space. However, the bases
in terms of Lyndon words are constructed for its subspace generated
by $V$ \cite[p. 446]{Radford}, as well as the whole space $T\left(V\right)$
by adding an appropriately defined Lyndon word of zero length or the
multiplicative identity element of the underlying field of $V$.

A similar situation holds true quantum shuffle algebras. A quantum
shuffle algebra is an algebra and coalgebra structure defined on the
tensor space $T\left(V\right)$. The bases in terms of Lyndon words
are constructed for the subalgebra of $T\left(V\right)$ generated
by $V$, denoted as $S_{\sigma}\left(V\right)$ and called quantum
symmetric algebra or bialgebra (or Hopf algebra) of type one \cite[p. 407]{MRosso},
as well as for the whole tensor space $T\left(V\right)$.

This naturally follows from starting with and intending to use the
bases of $V$ to construct shuffle algebras.

\begin{defn}[Braiding on $T\left(V\right)$ \cite{jian,dechela2}]

Consider an arbitrary braiding $\tau\in End\left(V\otimes V\right)$
. Then $\tau$ satisfies the quantum Yang-Baxter braiding equation
(or, ``braiding equation'') on $V^{\otimes3}$ 
\[
\left(1\otimes\tau\right)\circ\left(\tau\otimes1\right)\circ\left(1\otimes\tau\right)=\left(\tau\otimes1\right)\circ\left(1\otimes\tau\right)\circ\left(\tau\otimes1\right)
\]

A vector space $V$ with braiding $\tau$ that satisfies these conditions
is called ``braided vector space''. The braiding $\tau$ induces
an action of the Artin braid group $B_{n}$ on $V^{\otimes n}\subset T\left(V\right)$
for all $n\in\mathbb{N}$.

The standard generators of $B_{n}$ are denoted as $\sigma_{1},\cdots,\sigma_{n-1}$.
For all $i\in\left[\left[1,n-1\right]\right]$, the generator $\sigma_{i}$
is defined as $1^{\otimes\left(i-1\right)}\otimes\tau\otimes1^{\otimes\left(n-i-1\right)}$
and can be written as $\left(i\mapsto i+1\right)$ \cite{braidgroups}.
Its inverse $\sigma_{i}^{-1}$ can be written as $\left(i+1\mapsto i\right)$.
The generator $\sigma_{i}$ acts on $V^{\otimes n}$ as follows:
\[
\sigma_{i}:\otimes_{j=1}^{n}V_{\left(j\right)}\mapsto\left(\otimes_{j=1}^{i-1}V_{\left(j\right)}\right)\otimes\tau\left(V_{\left(i\right)}\otimes V_{\left(i+1\right)}\right)\otimes\left(\otimes_{j=i+1}^{n}V_{\left(j\right)}\right)
\]
\end{defn}

\subsection{Symmetric and braid group notations}

We use the following symmetric and braid group notations \cite[p. 401]{MRosso}:
\begin{itemize}
\item $\sum_{n}$: the symmetric group of $\left\{ 1,2,\cdots,n\right\} $
\item $s_{i}$: transposition $\left(i,i+1\right)$
\item $B_{n}$: Artin braid group on $n$ strands
\item $\sigma_{i}$ (or $\left(i\mapsto i+1\right)$) for $i\in\left[\left[1,n-1\right]\right]$:
the $i$-th generator of $B_{n}$
\item $\left(l_{1},\cdots,l_{r}\right)$-shuffle for $l_{1}+\cdots+l_{r}=n$:
the set of permutations $w$ such that
\[
w\left(1\right)<w\left(2\right)<\cdots<w\left(l_{1}\right)
\]
\[
w\left(l_{1}+1\right)<w\left(l_{1}+2\right)<\cdots<w\left(l_{1}+l_{2}\right)
\]
\[
\cdots
\]
\[
w\left(l_{1}+\cdots+l_{r-1}+1\right)<\cdots<w\left(n\right)
\]
\item $\sum_{\left(l_{1},\cdots,l_{r}\right)}$ for $l_{1}+\cdots+l_{r}=n$:
the set of $\left(l_{1},\cdots,l_{r}\right)$-shuffles
\item $\sum_{l_{1},n-l_{1}}$ for $l_{1}\le n$: the set of $\left(l_{1},n-l_{1}\right)$-shuffles
\item $l\left(w\right)$: the length of the reduced expression of permutation
$w$ in terms of standard generators $s_{i}$
\item $T_{w}$ for $w\in\sum_{n}$: the lift of $w$ in $B_{n}$, also known
as ``Matsumoto section'' \cite{matsumoto}, and defined as $w=s_{i_{1}}\cdots s_{i_{l\left(w\right)}}\mapsto T_{w}=\sigma_{i_{1}}\cdots\sigma_{i_{l\left(w\right)}}$
\item $\mathscr{B}_{\left(l_{1},\cdots,l_{r}\right)}=\sum_{w\in\sum_{\left(l_{1},\cdots,l_{r}\right)}}T_{w}$
\item $\mathscr{\tilde{B}}_{\left(l_{1},\cdots,l_{r}\right)}=\sum_{w\in\sum_{\left(l_{1},\cdots,l_{r}\right)}}T_{w}^{-1}$
\end{itemize}

\subsection{Matsumoto section}

Symmetric group $\sum_{n}$ is generated by $n-1$ transpositions
$s_{j}=\left(j,j+1\right)$, where $j\in\left[\left[1,n-1\right]\right]$.
Likewise, braid group $B_{n}$ is generated by $n-1$ generators of
$\sigma_{j}=\left(j\mapsto j+1\right)$, where $j\in\left[\left[1,n-1\right]\right]$.
While verbs 'transpose' and 'permute' are defined for symmetric groups,
there is no universally recognized verb usage that fully expresses
respective braiding operations for braid groups. To remedy this, we
will extend this symmetric group terminology to braid groups. We will
say that the generators of $B_{n}$ transpose a pair of elements by
analogy with the symmetric group, with the caveat that $\sigma_{j}\ne\sigma_{j}^{-1}$
for all $j\in\left[\left[1,n-1\right]\right]$. We will say that elements
of $B_{n}$ permute elements of a tensor space (or a string) by the
same analogy. For any $s_{j}\in\sum_{n}$, we can define its lift
in $B_{n}$ (or, ``Matsumoto section'') as $T_{s_{j}}=\left\{ \sigma_{j},\sigma_{j}^{-1}\right\} $,
but only $T_{s_{j}}=\sigma_{j}$ applies for shuffle algebra purposes
by their construction. For a reduced expression on $w=s_{1}\cdot\cdots\cdot s_{l\left(w\right)}\in\sum_{n}$,
we define $T_{w}$ as $T_{w}=T_{s_{1}}\cdot\cdots\cdot T_{s_{l\left(w\right)}}$.

\subsection{Shuffle product on tensor space $T\left(V\right)$}
\begin{prop}[Shuffle product on $T\left(V\right)$ \cite{MRosso,Radford}]
\end{prop}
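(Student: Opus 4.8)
The proposition asserts that the shuffle product endows the tensor space $T\left(V\right)$ with a graded, unital, associative algebra structure. The plan is to fix the product on homogeneous components, check the unit axiom directly, and then reduce associativity to a combinatorial statement about minimal-length coset representatives together with the multiplicativity of the Matsumoto section.

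First I would define the product $\mu$ on bihomogeneous pieces $V^{\otimes l_1}\otimes V^{\otimes l_2}\to V^{\otimes\left(l_1+l_2\right)}$ by
\[
\mu\left(v_a\otimes v_b\right)=\mathscr{B}_{\left(l_1,l_2\right)}\left(v_a\otimes v_b\right)=\sum_{w\in\sum_{\left(l_1,l_2\right)}}T_w\left(v_a\otimes v_b\right),
\]
with $l_1=\left|a\right|$, $l_2=\left|b\right|$, and extend bilinearly, taking the empty word in $V^{\otimes0}$ as the candidate unit. The unit axiom is immediate: the only $\left(0,l_2\right)$- and $\left(l_1,0\right)$-shuffle is the identity permutation, whose Matsumoto lift is the identity endomorphism, so $\mu$ acts as the obvious isomorphism $V^{\otimes0}\otimes V^{\otimes l}\cong V^{\otimes l}\cong V^{\otimes l}\otimes V^{\otimes0}$. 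Gradedness is built into the definition, since $\mu$ maps $V^{\otimes l_1}\otimes V^{\otimes l_2}$ into $V^{\otimes\left(l_1+l_2\right)}$.

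The crux is associativity, which I would prove by showing that both iterated products $\mu\circ\left(\mu\otimes1\right)$ and $\mu\circ\left(1\otimes\mu\right)$, applied to $v_a\otimes v_b\otimes v_c$ with block lengths $l_1,l_2,l_3$, coincide with the single operator $\mathscr{B}_{\left(l_1,l_2,l_3\right)}$. The structural input is that the Matsumoto section is multiplicative whenever lengths add, i.e. $T_{uv}=T_uT_v$ whenever $l\left(uv\right)=l\left(u\right)+l\left(v\right)$; this follows because the braid relations satisfied by the $\sigma_i$ (equivalently the Yang--Baxter equation for $\tau$ recorded in the braiding definition) make $T_w$ independent of the chosen reduced expression, so concatenating reduced words for $u$ and $v$ realizes $T_{uv}$ as $T_uT_v$. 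I would then invoke the parabolic factorization of shuffles: every $\left(l_1,l_2,l_3\right)$-shuffle decomposes uniquely as an $\left(l_1+l_2,l_3\right)$-shuffle times an $\left(l_1,l_2\right)$-shuffle acting on the first block, with lengths adding, and symmetrically as an $\left(l_1,l_2+l_3\right)$-shuffle times an $\left(l_2,l_3\right)$-shuffle on the last block. Summing $T_w$ over each factorization and applying multiplicativity reproduces the two iterated products, and both therefore equal $\mathscr{B}_{\left(l_1,l_2,l_3\right)}$.

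The main obstacle I anticipate is the combinatorial bookkeeping of this parabolic factorization together with the length-additivity it requires: one must verify that the decomposition of an $\left(l_1,l_2,l_3\right)$-shuffle into nested coset representatives is a genuine bijection and that each such product of representatives is reduced, so that $T_w$ factors as the product of the two lifts with no spurious braid-relation corrections. Establishing $l\left(uv\right)=l\left(u\right)+l\left(v\right)$ for these nested minimal-length representatives is the delicate point; once it is in hand, associativity follows formally, and the whole argument covers the classical case (where $\tau$ is the flip, recovering Radford's shuffle algebra) and the braided/quantum case (Rosso) uniformly.
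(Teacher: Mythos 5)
Your proposal is correct, but it takes a genuinely different route from the paper. The paper states this proposition without any proof of its own, deferring to \cite{MRosso,Radford}, and in the framework it actually develops (following Rosso) associativity is obtained \emph{structurally}: the product on the cotensor Hopf algebra $T_{H}^{c}\left(M\right)$ is defined by its universal property as the unique coalgebra map induced by the product on $H$ and the coactions, the right coinvariants form a subalgebra, and the embedding $\phi$ (respectively the isomorphism $\tilde{\phi}$ of Theorem \ref{thm:TheMapQSA}) transports this associative structure to $T\left(V\right)$, where it is then computed to be the shuffle formula. Your argument is instead direct and combinatorial: well-definedness of the Matsumoto section $T_{w}$ (via Matsumoto's theorem and the braid relations for $\tau$), multiplicativity $T_{uv}=T_{u}T_{v}$ when $l\left(uv\right)=l\left(u\right)+l\left(v\right)$, and the parabolic coset factorization $\sum_{\left(l_{1},l_{2},l_{3}\right)}=\sum_{\left(l_{1}+l_{2},l_{3}\right)}\cdot\left(\sum_{\left(l_{1},l_{2}\right)}\times\left\{ e\right\} \right)$ with lengths adding, so that both bracketings equal $\mathscr{B}_{\left(l_{1},l_{2},l_{3}\right)}$. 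The ``delicate point'' you flag --- that this factorization is a length-additive bijection --- is in fact a standard result on minimal-length coset representatives of parabolic subgroups of $\sum_{n}$ (see e.g. \cite{bourbaki}), so no genuine gap remains once you cite it. What each approach buys: yours is self-contained, uniform over the classical case ($\tau$ the flip) and the braided case, and needs no Hopf-algebraic input; the paper's structural route requires the whole Hopf bimodule apparatus but yields for free the compatibility of the product with the coproduct, i.e. the full Hopf algebra structure, which a purely combinatorial associativity proof does not address.
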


Assume that braid groups $B_{n},n\in\mathbb{N}$ act on $T\left(V\right)$.
Let $x_{1},\cdots,x_{n}$ be in $V$. We define an associative algebra
structure on $T\left(V\right)$, given by the shuffle product

\[
\left(x_{1}\otimes\cdots\otimes x_{p}\right)\cdot\left(x_{p+1}\otimes\cdots\otimes x_{n}\right)=\sum_{w\in\sum_{p,n-p}}T_{w}\left(x_{1}\otimes\cdots\otimes x_{n}\right)
\]

\subsection{Gradings of $T\left(V\right)$}

Let $\left(X,\le\right)$ be a totally ordered set. Let $S=\left(X\right)$
be the free semigroup generated by $X$.
\begin{defn}[Grading of $S$ \cite{Radford}]

Let $U=N^{\left(X\right)}$ be the additive semigroup of all functions
from $X$ to the natural numbers $N=\left\{ 0\right\} \cup\mathbb{N}$
which have finite support. For $u\in U$ let $S\left(u\right)\subseteq S$
bet the set of all $a=\prod_{j=1}^{k}x_{j}\in S$ such that $u\left(x\right)$
is the number of $x$'s in the factorization of $a$ for all $x\in X$.
We note that $S\left(u\right)$ is finite.
\end{defn}

\begin{defn}[$U$-graded coalgebra (bialgebra) \cite{Radford}]
\label{def:uG}

If $U$ is any commutative (resp. additive) semigroup, a coalgebra
(resp. bialgebra) $A$ is $U$-graded if for each $u\in U$ there
exists a subspace $A\left(u\right)$ such that (a)-(c) (resp. (a)-(d))
from the following are satisfied:

\begin{enumerate}
\item $A=\oplus_{u\in U}A\left(u\right)$, where $A\left(0\right)=k$;
\item $\epsilon\left(A\left(u\right)\right)=0$ if $u\ne0$;
\item $\Delta A\left(u\right)\subseteq\sum_{v+w=u}A\left(v\right)\otimes A\left(w\right)$;
\item $A\left(u\right)A\left(v\right)\subseteq A\left(u+w\right)$ for all
$u,v\in U$; and
\item $A\left(u\right)$ is finite-dimensional for all $u\in U$.
\end{enumerate}
An element $a\in A$ can uniquely decomposed as $a=\oplus a_{u}$,
where $a_{u}\in A\left(u\right)$ for all $u\in U$.
\end{defn}

$U$-grading was used in \cite{Radford} for classical shuffle algebras
but not in \cite{MRosso} for quantum shuffle algebras. While it may
not be essential for shuffle algebras, we find it to be useful for
understanding the structure of shuffle algebras and for their construction.
We decided to use $U$-grading for both classical and quantum shuffle
algebras in this work, and we found the application of $U$-grading
to quantum shuffle algebras to be an interesting exercise.

\begin{prop}[Gradings of $T\left(V\right)$ \cite{MRosso,Radford}]
\textcolor{white}{\scriptsize{}T}

\begin{enumerate}
\item $T\left(V\right)$ is naturally graded with $T^{k}V$ being the $k$-grade
subspace.
\item Let $T\left(V\right)\left(u\right)$ be the linear span of $v_{a}$'s
where $a\in S\left(u\right)$. This defines $U$-grading of $T\left(V\right)$.
\end{enumerate}
\end{prop}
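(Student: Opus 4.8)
The plan is to handle the two parts separately. Part (1) is almost immediate from the construction, while part (2) reduces to a term-by-term verification of conditions (a)--(e) of Definition \ref{def:uG}. Throughout I fix the indexed basis $\left\{ v_{x}\mid x\in X\right\} $ of $V$, so that $\left\{ v_{a}\mid a\in S,\ \left|a\right|=k\right\} $ is a basis of $V^{\otimes k}$ and the $v_{a}$ together with the unit $1$ (spanning $V^{\otimes0}=k$, corresponding to the empty word) form a basis of $T\left(V\right)$. The observation tying everything together is that the $U$-degree of a basis vector $v_{a}$ is exactly the character-count function of the word $a$, and that this refines the length grading since $\left|a\right|=\sum_{x\in X}u\left(x\right)$.

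For part (1), the decomposition $T\left(V\right)=\oplus_{k=0}^{+\infty}V^{\otimes k}$ is the defining one, so I only need that the shuffle product and the deconcatenation coproduct respect it. By the shuffle product proposition, a product of an element of $V^{\otimes p}$ with one of $V^{\otimes\left(n-p\right)}$ is a sum of terms $T_{w}\left(x_{1}\otimes\cdots\otimes x_{n}\right)$ with $w\in\sum_{p,n-p}$; each lift $T_{w}$ acts on $V^{\otimes n}$ and hence keeps us in $V^{\otimes n}$, giving $T^{p}V\cdot T^{n-p}V\subseteq T^{n}V$. Dually, deconcatenation sends $V^{\otimes n}$ into $\sum_{i+j=n}V^{\otimes i}\otimes V^{\otimes j}$. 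This establishes the natural grading.

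For part (2), I would check the conditions in turn. For (a), every basis vector $v_{a}$ lies in exactly one $S\left(u\right)$---the $u$ recording the multiplicity of each $x\in X$ in $a$---and the $S\left(u\right)$ cover all of $S$; thus the $T\left(V\right)\left(u\right)$ are spanned by disjoint pieces of a basis, their sum is direct, and $T\left(V\right)\left(0\right)=k$ is forced by the empty word. For (b), if $u\ne0$ then each $a\in S\left(u\right)$ has length at least one, so $v_{a}\in V^{\otimes\left|a\right|}$ with $\left|a\right|\ge1$ and $\epsilon\left(v_{a}\right)=0$. For (c), deconcatenation writes $a=a'a''$, and the counts of $a'$ and $a''$ are functions $v,w$ with $v+w=u$, so each summand lies in $T\left(V\right)\left(v\right)\otimes T\left(V\right)\left(w\right)$. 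For (e), the set $S\left(u\right)$ is finite, whence $\dim T\left(V\right)\left(u\right)=\left|S\left(u\right)\right|<\infty$.

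The step I expect to carry the real content is (d), the multiplicativity $T\left(V\right)\left(u\right)T\left(V\right)\left(v\right)\subseteq T\left(V\right)\left(u+v\right)$, and the only genuine subtlety is the quantum case. A shuffle product of $v_{a}$ and $v_{b}$ is a sum of terms $T_{w}$ applied to the concatenation; classically each such term is a permutation of the tensor factors, while in the quantum setting the diagonal braiding $\sigma\left(v\otimes w\right)=q_{ij}\left(w\otimes v\right)$ makes each term a \emph{scalar} multiple (a product of braiding coefficients) of some $v_{c}$. The point to verify carefully is that $T_{w}$ only permutes positions and rescales, never introducing or deleting a character, so the multiset of characters of $c$ is the disjoint union of those of $a$ and $b$; hence $c\in S\left(u+v\right)$ and the $U$-degree is preserved exactly as in the classical case.
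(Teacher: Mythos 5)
Your proposal is correct, but there is nothing in the paper to compare it against line by line: the paper states this proposition with citations to Radford and Rosso and gives no proof at all, adding only the one-sentence remark that ``these gradings are compatible with shuffle multiplication and with universal construction of a shuffle algebra.'' Your verification therefore supplies exactly what the paper leaves implicit, and it does so soundly: the reduction of part (2) to conditions (a)--(e) of Definition \ref{def:uG}, with $S\left(u\right)$ finite giving (e) and deconcatenation giving (c), matches the Radford-style argument the citation points to. You also correctly isolate the one step with real content, condition (d) in the quantum case, and your resolution is the right one --- but note that it is valid only because the braiding is diagonal, so that each $T_{w}$ acts on a basis tensor $v_{a}$ as a scalar times a position permutation. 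For a general braided vector space $\left(V,\tau\right)$ as in the paper's braiding definition, $\tau$ need not send $v_{x}\otimes v_{y}$ to a multiple of $v_{y}\otimes v_{x}$, and the $U$-grading would require a separate compatibility hypothesis on $\tau$; since the paper's bases construction (Lemma \ref{lem:BlockPerm2Primes}, Proposition \ref{prop:BasesQSA}) works throughout with $\sigma\left(e_{i}\otimes e_{j}\right)=q_{ij}\left(e_{j}\otimes e_{i}\right)$, your restriction is consistent with the paper's setting, and it is precisely the ``interesting exercise'' of applying $U$-grading to quantum shuffle algebras that the authors mention without carrying out. One cosmetic caution: $S$ is a free \emph{semigroup}, so there is no empty word in $S$; saying $T\left(V\right)\left(0\right)=k$ is ``forced by the empty word'' should be rephrased as the convention $V^{\otimes0}=k$ (the paper handles this by adjoining a length-zero Lyndon word or the unit of $k$), but this does not affect the argument.
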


These gradings are compatible with shuffle multiplication and with
universal construction of a shuffle algebra.

\subsection{Ordering and Lyndon words in $S$}
\begin{defn}[Total ordering of $S$ \cite{MRosso,Radford}]
\textcolor{white}{\scriptsize{}T}

\begin{enumerate}
\item A total ordering $\le$ on the set $S$ is defined by lexicographic
ordering, with the convention that $a\cdot b\le a$ for any $a,b\in S$.
\item We call an element $p\in S$ a prime (or, a ``Lyndon word'') if,
for any splitting $p=a\cdot b$ with $a,b\in S$, we have $b<p$.
We denote the set of primes in $S$ as $P$.
\end{enumerate}
\end{defn}

\begin{defn}[Prime factorization in $S$]

Let $a\in S$. We will call a factorization $\prod_{j=1}^{k}p_{j}=a$
a prime factorization of $a$ if each $p_{j}$ is a prime but cannot
itself be factorized into two or more primes.
\end{defn}

\begin{prop}[Existence of prime factorization in $S$]

Any $a\in S$ has a prime factorization.
\end{prop}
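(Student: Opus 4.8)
The plan is to argue by strong induction on the length $\left|a\right|$ of the word, i.e.\ the number $n$ of letters in the factorization $a=x_{1}\cdots x_{n}\in S$. The induction will simultaneously guarantee that the recursive splitting terminates and that every factor it produces lies in $P$, so that the resulting product is a prime factorization in the sense of the preceding definition. The two facts I would lean on are that $S=\left(X\right)$ consists of nonempty strings, so any splitting $a=u\cdot v$ with $u,v\in S$ strictly shortens both pieces, and that the prime condition is vacuous for one-letter words.

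First I would dispose of the base case $\left|a\right|=1$. A one-letter word $a=x_{1}$ admits no splitting $a=u\cdot v$ with $u,v\in S$, since both $u$ and $v$ would have to be nonempty and force $\left|a\right|\ge 2$. Hence the defining condition ``$v<a$ for every splitting $a=u\cdot v$'' holds vacuously, so $a\in P$, and $a$ is its own prime factorization with $k=1$.

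For the inductive step I would fix $a$ with $\left|a\right|\ge 2$ and assume every strictly shorter word has a prime factorization. If $a\in P$, we are done by taking $k=1$ and $p_{1}=a$. Otherwise $a\notin P$, so negating the prime condition produces a splitting $a=u\cdot v$ with $u,v\in S$ (hence both nonempty) and $v\ge a$; in particular $\left|u\right|,\left|v\right|<\left|a\right|$. The induction hypothesis then supplies prime factorizations $u=\prod_{i=1}^{m}p_{i}$ and $v=\prod_{j=1}^{l}q_{j}$, and concatenating them yields
\[
a=u\cdot v=p_{1}\cdots p_{m}\,q_{1}\cdots q_{l},
\]
a factorization of $a$ into elements of $P$. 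Because each recursion step decreases the length, the process is well-founded and halts, completing the induction.

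The main obstacle here is not existence itself --- which reduces to the vacuous primality of single letters together with the splitting of any non-prime word into strictly shorter pieces --- but rather reconciling the argument with the clause ``cannot itself be factorized into two or more primes'' and keeping the statement meaningful. I would note that the letter-by-letter factorization already gives a trivial instance, so the substantive question is how far one should split before stopping; the genuinely delicate point, which existence alone sidesteps, is the \emph{uniqueness} of the factorization and its canonical arrangement as a non-increasing product $p_{1}\ge\cdots\ge p_{k}$ under the order on $S$. I would therefore isolate uniqueness as a separate statement, since the present proposition asks only for existence.
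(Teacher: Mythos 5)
Your induction is internally sound, but it proves a weaker statement than the one the paper's definition demands. In this paper, a ``prime factorization'' is \emph{not} merely a factorization into primes: the definition (as the paper's own existence proof and the proof of Proposition \ref{prop:FormUPF} make clear) additionally requires that the factorization cannot be coarsened, i.e.\ that no consecutive block $\prod_{j=j_{1}}^{j_{2}}p_{j}$ of two or more factors is itself a prime. This is the clause you flagged and then set aside, and your remark that ``the letter-by-letter factorization already gives a trivial instance'' is false under that reading: for letters with $y<x$, the word $xy$ is prime, so its letter factorization $x\cdot y$ can be merged and is not a prime factorization --- the prime factorization is the single factor $xy$. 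The non-coarsenability clause is also exactly what the paper uses later, when an adjacent pair $p_{j_{0}}>p_{j_{0}+1}$ yields the prime $p_{j_{0}}\cdot p_{j_{0}+1}$ and hence a contradiction.

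Concretely, your inductive step fails on this clause: concatenating prime factorizations of $u$ and $v$ can create a mergeable block spanning the cut, and an arbitrary witness splitting $a=u\cdot v$ with $v\ge a$ does not prevent this. Take the alphabet $\left\{ x,y\right\}$ with $y<x$ and the paper's convention $a\cdot b\le a$. The word $a=yxyx$ is not prime (its suffix $xyx>a$), and $v=yx$ is a legitimate witness suffix with $v\ge a$ (prefix convention), giving the split $u=v=yx$. Since $yx$ is not prime (its suffix $x>yx$), each piece has prime factorization $y\cdot x$, and your concatenation yields $y\cdot x\cdot y\cdot x$; but the middle block $x\cdot y=xy$ is prime, so this is coarsenable and not a prime factorization in the paper's sense --- the correct one is $y\cdot xy\cdot x$. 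The paper avoids this by arguing bottom-up rather than top-down: start from the letter factorization and repeatedly replace any consecutive block that forms a prime by that single prime; each merge strictly decreases the number of factors, so the loop terminates, and it terminates precisely when the defining condition holds. Your argument can be repaired by appending that merging loop after concatenation, or by choosing the splitting carefully and proving a no-merge lemma across the cut, but as written the non-coarsenability requirement is never established. (A minor side point: with the convention $a\cdot b\le a$, the canonical arrangement of Proposition \ref{prop:FormUPF} is $p_{1}<\cdots<p_{k}$ with multiplicities, not the non-increasing order you state.)
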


\begin{proof}
Let $a\in S$. Consider the factorization $\prod_{j=1}^{k}x_{j}=a$,
where $x_{j}\in X$ for all $j$. Each $x_{j}$ is a prime because
$X\subset P$ by definition of a prime. We can use the following iterative
method with $\prod_{j=1}^{k}x_{j}=a$ as the initially considered
factorization:

\begin{enumerate}
\item If the currently considered factorization $\prod_{j=1}^{k}p_{j}=a$
is a prime factorization, stop, as we have achieved the objective.
\item Otherwise, there must be a prime $p=\prod_{j=j_{1}}^{j_{2}}p_{j}$
that prevents $\prod_{j=1}^{k}p_{j}=a$ from satisfying the definition
of a prime factorization. We replace the currently considered factorization
of $a$ by $\prod_{j=1}^{j_{1}-1}p_{j}\cdot p\cdot\prod_{j=j_{2}+1}^{k}p_{j}$,
decreasing the length of factorization by at least one.
\item Go to step 1.
\end{enumerate}
\end{proof}
Since $k$ is finite, this iterative method completes in a finite
number of steps, yielding a prime factorization of $a$.

\begin{prop}[Unique prime factorization in $S$ \cite{MRosso,Radford}]

Prime factorization of any $a\in S$ is unique.
\end{prop}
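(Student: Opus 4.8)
The plan is to build on the existence result just established; the entire difficulty of uniqueness is concentrated in a single lexicographic comparison lemma for primes, after which uniqueness follows by a monotonicity reduction and an induction. Throughout I keep the convention $a\cdot b\le a$ fixed above, under which a prime is exactly a word that is strictly larger than each of its proper suffixes, and I record the elementary consequence that no word is ever strictly greater than one of its own prefixes (since $a\cdot b\le a$). My strategy is: first prove that concatenating two suitably ordered primes again yields a prime; then deduce that any prime factorization must be \emph{non-decreasing}; and finally identify the last factor intrinsically, so that I can peel it off and induct on the length of $a$.

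\textbf{Merging lemma.} If $p,q\in P$ with $p>q$, then $p\cdot q\in P$. To prove this I would verify that every proper suffix $z$ of $p\cdot q$ satisfies $z<p\cdot q$, distinguishing three cases: $z$ a proper suffix of $q$; $z=q$; and $z=v\cdot q$ with $v$ a proper nonempty suffix of $p$. In the first two cases I use that $q$ is a prime, so each proper suffix of $q$ is $<q$, together with the inequality $q<p\cdot q$; in the third I use $v<p$. Each of these reduces to comparing two words and is settled once the degenerate overlap (one argument being a prefix of the other) is excluded, which is exactly where the convention is invoked: for instance $v<p$ cannot hold with $v$ a prefix of $p$, since then $p\le v$. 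The one genuinely delicate configuration, where $p$ is a prefix of $q$, is resolved by re-applying the primality of $q$ to the residual suffix.

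From the merging lemma I would deduce that any prime factorization $a=\prod_{j=1}^{k}p_{j}$ in the sense defined above is non-decreasing, i.e. $p_{1}\le p_{2}\le\cdots\le p_{k}$: were some adjacent pair to satisfy $p_{j}>p_{j+1}$, the block $p_{j}\cdot p_{j+1}$ would itself be a prime, so it could be merged, contradicting the defining property that no consecutive sub-product is a prime (equal neighbours are harmless, as $p\cdot p\le p$ shows $p\cdot p$ is never a prime). To pin down such a factorization I would characterize its last factor. The lexicographically largest suffix $s$ of $a$ is automatically a prime, because for any splitting $s=u\cdot v$ the tail $v$ is a shorter suffix of $a$, hence $v\le s$ and $v\ne s$, so $v<s$. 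Moreover $s=p_{k}$: the factor $p_{k}$ is one of the suffixes, and every suffix of $a$ is either a suffix of $p_{k}$ (hence $\le p_{k}$) or of the form $v\cdot p_{i+1}\cdots p_{k}$ with $v$ a suffix of some $p_{i}$, $i<k$, and monotonicity together with the prime comparisons forces each of the latter to be $<p_{k}$. Since $s=p_{k}$ is thus determined by $a$ alone, stripping it off and applying the induction hypothesis to $p_{1}\cdots p_{k-1}$ gives uniqueness.

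The hard part will be the lexicographic bookkeeping that underlies both the merging lemma and the identification $s=p_{k}$: whenever I compare a long word assembled from several factors against a single prime, I must rule out the possibility that one of the two is a prefix of the other, and it is precisely there that the convention $a\cdot b\le a$ and the defining inequality of a prime have to be used with care. Once this comparison machinery is in place, the monotonicity reduction and the inductive peeling of the largest suffix are routine.
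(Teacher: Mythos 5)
Your proof strategy is correct, but it takes a genuinely different route from the paper's. The paper compares two putative factorizations head-on: if $p_{1}\ne p_{1}^{\prime}$, then (without loss of generality) $p_{1}=\left(\prod_{j=1}^{m}p_{j}^{\prime}\right)\cdot s$ with $s$ a prefix of $p_{m+1}^{\prime}$, whence the convention $a\cdot b\le a$ and primality yield the chain $p_{1}^{\prime}>p_{1}>s\ge p_{m+1}^{\prime}$, contradicting the non-decreasing form of a prime factorization (Proposition \ref{prop:FormUPF}); equality of the remaining factors then propagates by induction on the factor index, and $r=k$ follows from the convention once more. You instead characterize the factorization intrinsically: you prove the merging lemma ($p>q$ primes implies $p\cdot q$ prime), deduce monotonicity from it, identify the last factor as the lexicographically largest suffix of $a$, and peel it off by induction on the length of $a$ --- the mirrored Chen--Fox--Lyndon argument under the prefix-dominant convention. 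Two comparative points. First, your merging lemma is precisely the step that the paper's proof of Proposition \ref{prop:FormUPF} dispatches with the phrase ``by definition of prime''; it is not definitional, and since the paper's contradiction $p_{1}^{\prime}>p_{m+1}^{\prime}$ only bites once monotonicity is available, your case analysis (including the delicate configuration where $p$ is a proper prefix of $q$, resolved by applying primality of $q$ to the residual suffix) supplies a justification on which the paper's own argument silently leans. Second, your suffix characterization buys more than uniqueness: it gives a canonical description of each factor (repeatedly strip the largest suffix), hence an algorithm, at the cost of the lexicographic bookkeeping you flag --- which does close as you predict, e.g. $v<p_{k}$ rules out $v$ being a proper prefix of $p_{k}$ (prefixes dominate), so either the first letter disagreement survives right-concatenation or $p_{k}$ is a proper prefix of $v$ and then $v\cdot w<p_{k}$ directly from the convention, while whole tails satisfy $p_{i}\cdots p_{k}<p_{i}\le p_{k}$. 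When writing it up, make explicit that the truncated product $p_{1}\cdots p_{k-1}$ is still a prime factorization in the paper's sense (its consecutive blocks are blocks of the original) and that a prime $a$ admits no factorization with $k\ge2$ (the full block would itself be a prime), which anchors your induction.
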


\begin{proof}
Let $a\in S$ and suppose that $\prod_{j=1}^{k}p_{j}$ and $\prod_{j=1}^{r}p_{j}^{\prime}$
are prime factorizations of $a$. We need to prove that $k=r$ and
$p_{i}=p_{i}^{\prime}$ for all $i$.

Indeed:

\begin{enumerate}
\item The case $r=k=1$ is trivial.
\item Suppose that $r>1$ or $k>1$. We will prove that $p_{n}=p_{n}^{\prime}$
for $n=1$. Suppose $p_{1}\ne p_{1}^{\prime}$. Then either $p_{1}=\left(\prod_{j=1}^{m}p_{j}^{\prime}\right)\cdot s$
or $p_{1}^{\prime}=\left(\prod_{j=1}^{m}p_{j}\right)\cdot s$ for
some $m\in\mathbb{N}$ and $s\in S$ such that $s$ is shorter or
equal in length to $p_{m+1}^{\prime}$ or $p_{m+1}$ respectively.
Without the loss of generality, let $p_{1}=\left(\prod_{j=1}^{m}p_{j}^{\prime}\right)\cdot s$
for some $s\in S$. By definition of $s$, $s\ge p_{m+1}^{\prime}$.
Because $p_{1}$ is a prime, $p_{1}>s$ and therefore, $p_{1}^{\prime}>p_{1}>s\ge p_{m+1}^{\prime}$.
This contradicts $p_{1}^{\prime}\cdot\cdots\cdot p_{r}^{\prime}$
being a prime factorization. We can use the proof of Proposition \ref{prop:FormUPF}
here to justify this contradiction because it doesn't use the uniqueness
property of prime factorization.
\item Applying the method of mathematical induction on $n$ with item 2
of this proof as its base case and a trivial inductive step, we get
$p_{n}=p_{n}^{\prime}$ for all $n\le\max\left(\left\{ r,k\right\} \right)$.
\item Now suppose that $r\ne k$. Without the loss of generality, suppose
$r>k$. Then 
\[
a=\prod_{j=1}^{k}p_{j}=\prod_{j=1}^{k}p_{j}\cdot\prod_{j=k+1}^{r}p_{j}^{\prime}
\]
But by definition of total ordering $<$, 
\[
\prod_{j=1}^{k}p_{j}>\prod_{j=1}^{k}p_{j}\cdot\prod_{j=k+1}^{r}p_{j}^{\prime}
\]
Contradiction.
\end{enumerate}
We have proved that $r=k$ and $p_{i}=p_{i}^{\prime}$ for all $i$.
\end{proof}

\begin{prop}[Form of unique prime factorization \cite{MRosso,Radford}]
\label{prop:FormUPF}

For any $a\in S$, its unique prime factorization has the form $a=\prod_{j=1}^{k}p_{j}^{n_{j}}$,
where $p_{j}<p_{j+1}$ for all $1\le j\le k-1$ and $n_{j}\in\mathbb{N}$
for all $1\le j\le k$. 
\end{prop}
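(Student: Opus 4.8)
The plan is to reduce the statement to a purely order-theoretic claim and then lean on the two preceding propositions. By the results on existence and uniqueness of prime factorization, every $a\in S$ admits a unique factorization $a=q_{1}q_{2}\cdots q_{m}$ with each $q_{i}\in P$, and, by the very definition of a prime factorization, no product $q_{j_{1}}\cdots q_{j_{2}}$ of two or more consecutive factors is itself a prime. Granting this, the only thing left to prove is that the factors are listed in non-decreasing order, $q_{1}\le q_{2}\le\cdots\le q_{m}$. Indeed, once the list is non-decreasing I can merge each maximal run of equal primes into a single power, which rewrites $a$ as $\prod_{j=1}^{k}p_{j}^{n_{j}}$ with $p_{1}<p_{2}<\cdots<p_{k}$ and each $n_{j}\in\mathbb{N}$; the uniqueness of this grouped expression is then inherited from the uniqueness of the underlying prime factorization.

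The engine of the argument, and the step I expect to be the main obstacle, is a concatenation lemma for primes: \emph{if $p,q\in P$ satisfy $q<p$, then $pq\in P$} (this is the paper's order-theoretic form of the classical fact that the product of two Lyndon words in the correct order is again a Lyndon word). I would prove it straight from the definition of a prime, by verifying that every proper suffix $s$ of $pq$ obeys $s<pq$. Every such $s$ has one of three shapes: a proper suffix of $q$; the word $q$ itself; or $s=t\,q$ with $t$ a proper suffix of $p$. The pivotal sub-fact is that $q<p$ forces $q<pq$; once this is in hand, any proper suffix of $q$ is $<q<pq$ because $q$ is prime, and the case $s=q$ is covered directly. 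For $s=t\,q$ I would use that $p$ being prime gives $t<p$, and, since $t$ is strictly shorter than $p$ (so $p$ is not a prefix of $t$) while $t<p$ rules out $t$ being a prefix of $p$, the two words first disagree at a genuine letter position, which already settles $t\,q<p\,q$. Collecting the three cases shows $pq$ exceeds all of its proper suffixes, hence $pq\in P$.

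With the lemma established, the ordering claim follows by contraposition applied to adjacent factors. For each $j$, the product $q_{j}q_{j+1}$ is a product of two consecutive factors of a prime factorization, hence is not a prime. Were $q_{j}>q_{j+1}$, the lemma (with $p=q_{j}$, $q=q_{j+1}$) would make $q_{j}q_{j+1}$ a prime, a contradiction; therefore $q_{j}\le q_{j+1}$. Since $j$ is arbitrary, the factorization is non-decreasing, and the grouping described in the first paragraph delivers the asserted form. I expect essentially all of the effort to sit in the concatenation lemma, and specifically in the order bookkeeping: because the convention $a\cdot b\le a$ makes proper extensions \emph{smaller} while a prime is \emph{larger} than each of its proper suffixes, each inequality must be applied in the correct direction, and it is the sub-fact $q<pq$ together with the suffix case $t\,q<p\,q$ where that care is needed.
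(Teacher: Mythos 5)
Your proposal is correct and takes essentially the same route as the paper: the paper also merges equal adjacent primes into powers and rules out a strictly decreasing adjacent pair by observing that $p_{j_{0}}>p_{j_{0}+1}$ would make $p_{j_{0}}\cdot p_{j_{0}+1}$ itself a prime, contradicting the definition of a prime factorization. The only difference is that you actually prove the concatenation lemma --- via the sub-fact $q<pq$ and the three-way suffix case analysis, with the inequalities applied in the correct direction under the convention $a\cdot b\le a$ --- whereas the paper asserts it with just ``by definition of prime,'' so your write-up fills in the one step the paper leaves unjustified.
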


\begin{proof}
Indeed, suppose that $p_{j_{0}}\ge p_{j_{0}+1}$ for some $1\le j_{0}\le k-1$.
Then one of the following is true:

\begin{enumerate}
\item $p_{j_{0}}=p_{j_{0}+1}$. Then the prime factorization can be rewritten
as $a=\prod_{j=1}^{j_{0}-1}p_{j}^{n_{j}}\cdot p_{j_{0}}^{n_{j_{0}}+n_{j_{0}+1}}\cdot\prod_{j=j_{0}+2}^{k}p_{j}^{n_{j}}$,
where we omit any terms $p_{j}$ for $j\le0$ or $j\ge k+1$.
\item $p_{j_{0}}>p_{j_{0}+1}$. Then $a$ contains the term $p_{j_{0}}\cdot p_{j_{0}+1}$,
where $p_{j_{0}}>p_{j_{0}+1}$, but then by definition of prime, $p_{j_{0}}\cdot p_{j_{0}+1}$
itself is a prime, and there is a contradiction with $a=\prod_{j=1}^{k}p_{j}^{n_{j}}$
being a prime factorization.
\end{enumerate}
\end{proof}

\begin{thm}[Relation between $T_{w}\left(a\right)$ and $a$ \cite{MRosso,Radford}]
\label{thm:TwAA}

Suppose $a\in S$ has prime factorization $a=\prod_{i=1}^{s}p_{i}^{n_{i}}$.
Then

\begin{enumerate}
\item $T_{w}\left(a\right)\ge a$ for $w\in\sum_{\left(l_{1},\cdots,l_{s}\right)}$,
where $l_{i}$ is the length of $p_{i}$.
\item Let $w\in\sum_{\left(l_{1},\cdots,l_{s}\right)}$. Then $T_{w}\left(a\right)=a$
if and only if $w$ is in the subgroup $\sum_{n_{1}}\times\cdots\times\sum_{n_{s}}$
of ``block permutations'', permuting only $p_{i}$'s among themselves
for each $i$.
\end{enumerate}
\end{thm}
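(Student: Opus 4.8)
The plan is to reduce both claims to a purely combinatorial statement about words and then prove that statement by a first-difference analysis driven by the defining property of primes. First I would note that, since the braiding is diagonal, for every $w$ the element $T_w(a)$ is a nonzero scalar times a single basis word obtained by interleaving the letters of the prime blocks $p_1,\dots,p_1,p_2,\dots,p_s$ (with $p_i$ occurring $n_i$ times, each block keeping its internal left-to-right order, as forced by the shuffle condition that $w$ be increasing on each part). Because the ordering $\le$ only sees the underlying word and the scalar is never zero, it suffices to compare these interleaved words. Writing the blocks in weakly increasing order as $q_1\le q_2\le\cdots\le q_N$ with $N=n_1+\cdots+n_s$ (legitimate by Proposition \ref{prop:FormUPF}), the word $a$ is exactly the sorted concatenation $q_1q_2\cdots q_N$, so the content of the theorem is that $a$ is the $\le$-minimum over all interleavings, attained precisely by the block permutations.

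For Part 1 I would argue by first difference. Suppose $b=T_w(a)$ is an interleaving with $b\neq a$ as words; since $b$ and $a$ have equal length there is a least index $i$ with $b_i\neq a_i$, and it is enough to prove $b_i>a_i$, for then $a<b$ in the given order. Say position $i$ falls in block $q_m$ at internal position $j$, so that $a_i=(q_m)_j$ while $b$ has read off the prefix $q_1\cdots q_{m-1}(q_m)_1\cdots(q_m)_{j-1}$ by consuming initial segments of the blocks. The letter $b_i$ is the first still-unread letter of some block, and I must show every available head letter is $\ge(q_m)_j$. The engine is a direct consequence of primality: since every prime is strictly greater than each of its proper suffixes, the first letter of a prime is its maximal letter; combined with $q_1\le\cdots\le q_N$ this shows that the first letter of any not-yet-started block is $\ge(q_m)_1\ge(q_m)_j$. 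The remaining and more delicate case is a block already partially consumed, where one must check that, consistently with having spelled the matched prefix, its next letter is also $\ge(q_m)_j$, with equality only when the block is a synchronized copy of the same prime. I would organize this as an induction on the number of blocks $N$, whose inductive step is exactly this head-letter comparison.

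Part 2 then comes in two halves. The direction $(\Leftarrow)$ is immediate: an element of $\sum_{n_1}\times\cdots\times\sum_{n_s}$ sends each prime occurrence to a contiguous increasing run of positions (so it is indeed a shuffle) and merely permutes the $n_i$ identical copies of each $p_i$ among themselves; since these copies are equal words and already adjacent in $a$, the resulting word is again $a$. For $(\Rightarrow)$ I would use the strict form of the first-difference analysis: if $w$ is not a block permutation then at some stage the interleaving either splits a single prime block or nontrivially interleaves blocks of two distinct primes, and in either case the argument above produces a first position where $b$ strictly exceeds $a$, so $b\neq a$. Contrapositively, $T_w(a)=a$ forces $w$ to permute only equal primes blockwise, i.e. $w\in\sum_{n_1}\times\cdots\times\sum_{n_s}$.

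I expect the main obstacle to be precisely this head-letter control in Part 1: after the interleaving has matched a prefix of $a$, one must rule out any partially consumed block offering a continuation strictly smaller than $(q_m)_j$. This requires a careful accounting of how the already-read letters can be distributed among the blocks, together with repeated use of the strict inequality $b<p$ for every proper suffix $b$ of a prime $p$. Designing the induction so that this single comparison is all that must be checked at each step, rather than tracking every partial consumption simultaneously, is the delicate point; once the comparison lemma that diverting from the sorted concatenation strictly increases the word is in place, Part 1 and the forward direction of Part 2 follow at once.
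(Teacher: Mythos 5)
Your reduction is sound and matches the paper's framing: with diagonal braiding, $T_{w}\left(a\right)$ is a nonzero scalar times the single word obtained by interleaving the prime blocks, so both items are purely word-level claims, and your $\left(\Leftarrow\right)$ half of item 2 (block permutations of equal adjacent blocks are legitimate shuffles fixing the word $a$) is fine. The genuine gap is that the entire mathematical content of item 1, and of the $\left(\Rightarrow\right)$ half of item 2, is concentrated in the ``head-letter'' lemma you defer: that once an interleaving has matched the prefix $a_{1}\cdots a_{i-1}$, every available next letter of a \emph{partially consumed} block is $\ge a_{i}$, with equality only for a synchronized copy of the same prime. You say this ``must be checked'' and that the induction must be ``designed'' so this comparison suffices, but you give no argument -- this is not a routine verification, it is the theorem itself. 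Moreover, the one case you do argue contains a non sequitur: for a not-yet-started block $q_{r}$ with $r<m$, the sorted order $q_{1}\le\cdots\le q_{N}$ yields $\left(q_{r}\right)_{1}\le\left(q_{m}\right)_{1}$, not $\ge$ (under the paper's convention, take $q_{r}=1$ and $q_{m}=21$: the first letters compare as $1<2$). Your chain of inequalities is valid only for untouched blocks $\ge q_{m}$; to cover earlier blocks you would additionally have to prove that no strictly smaller block can remain untouched once the prefix through $q_{m-1}$ has been spelled, which is again exactly the deferred accounting of how consumed letters distribute among blocks.

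The paper's proof avoids this per-step control of all partially consumed heads by a global extremal-letter argument: it singles out $x_{s1}$, the first (hence maximal) letter of the largest prime $p_{s}$, and compares the position sets $\lambda_{T_{w}\left(a\right)}\left(x_{s1}\right)$ and $\lambda_{a}\left(x_{s1}\right)$. If the minimal occurrence of $x_{s1}$ moves left, then $T_{w}\left(a\right)>a$ immediately; if not, the shuffle (monotonicity) condition forces the final copy of $p_{s}$ onto the last $l_{s}$ positions, so that block can be peeled off and the argument closed by induction on the number of blocks, with the equality analysis of case (1b) delivering item 2. If you want to complete your plan, the viable route is to prove your comparison lemma by this kind of extremal-letter peeling induction, rather than attempting to bound simultaneously the heads of all partially consumed blocks at an arbitrary matched prefix; as it stands, the proposal identifies the crux correctly but does not prove it, and its only substantive inequality is justified backwards.
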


\begin{proof}
We note that for $a=p_{1}$, we have $T_{w}\left(a\right)=T_{w}\left(p_{1}\right)=p_{1}=a$
because $T_{\sum_{\left(l_{1}\right)}}=\left\{ Id\right\} $, so \emph{a
forteriori} $T_{w}\left(a\right)\ge a$. However, we will use the
fact that item (1) is true for $s=0$ vacuously and see that this
makes sense as we use it. Now suppose that $a=\prod_{i=1}^{s}p_{j}^{n_{i}}$
for $s=r\ge2$. Decompose $p_{i}=\prod_{j=1}^{l_{i}}x_{ij}$ for all
$1\le i\le s$. Also, we denote the decomposition of any $z\in S$
in $X$ as $z=\prod_{j}z_{j}$. By definition of prime and definition
of prime decomposition, we have $x_{s1}>x_{ji}$ for all $j$ and
$i$ such that $\left(i,j\right)\ne\left(1,s\right)$. Let $\lambda_{a}\left(x\right)$
be the set of indices $\left\{ i\right\} $ such that $a_{i}=x$.
Then by definition of a $\left(l_{1},\cdots,l_{s}\right)$-shuffle,
we then have that $\lambda_{a}\left(x_{s1}\right)\subseteq\left\{ 1,l_{1},\cdots,\sum_{t=1}^{s}n_{t}l_{t}-l_{s}+1\right\} $.
We note that the restriction of $w$ to $w^{-1}\left(\left[1,\min\left(\lambda_{T_{w}\left(a\right)}\left(x_{s1}\right)\right)-1\right]\right)$
is equivalent to the restriction of some $u\in\sum_{\left(l_{1},\cdots,l_{s}-1\right)}$
to the same set. There are two possibilities:

\begin{description}
\item [{(1a)}] $i_{0}:=\min\left(\lambda_{T_{w}\left(a\right)}\left(x_{s1}\right)\right)<\lambda_{a}\left(x_{s1}\right)$.
Supposing that item (1) of this Theorem is true for $s=r-1\ge1$,
we have that $\prod_{i=1}^{i_{0}-1}T_{w}\left(a\right)_{i}\ge\prod_{i=1}^{i_{0}-1}a_{i}$,
and because $x_{s1}>x_{ji}$ for all $j$ and $i$ such that $\left(i,j\right)\ne\left(1,s\right)$,
we have that $T_{w}\left(a\right)_{i_{0}}>a_{i_{0}}$. Therefore,
$T_{w}\left(a\right)>a$.
\item [{(1b)}] $i_{0}:=\min\left(\lambda_{T_{w}\left(a\right)}\left(x_{s1}\right)\right)\ge\lambda_{a}\left(x_{s1}\right)$.
Then $\lambda_{T_{w}\left(a\right)}\left(x_{s1}\right)=\lambda_{a}\left(x_{s1}\right)$.
Suppose $l=w^{-1}\left(\max\left(\lambda_{T_{w}\left(a\right)}\left(x_{s1}\right)\right)\right)$.
Applying the definition of a $\left(l_{1},\cdots,l_{s}\right)$-shuffle,
we have 
\[
w\left(l\right)<\cdots<w\left(l+l_{s}-1\right)
\]
and therefore for the interval $I=\left[\left[l,l+l_{s}-1\right]\right]$
we have $w\left(I\right)=\left[\left[\sum_{t=1}^{s}n_{t}l_{t}-l_{s}+1,\sum_{t=1}^{s}n_{t}l_{t}\right]\right]$.
Iteratively applying this reasoning, we have that $\prod_{i=\sum_{t=1}^{s-1}n_{t}l_{t}+1}^{\sum_{t=1}^{s}n_{t}l_{t}}w\left(a\right)_{i}=\prod_{i=\sum_{t=1}^{s-1}n_{t}l_{t}+1}^{\sum_{t=1}^{s}n_{t}l_{t}}a_{i}$,
i.e. $w\in\sum_{\left(l_{1},\cdots,l_{s-1}\right)}\times\sum_{n_{s}}$.
Now, supposing that item (1) of this Theorem is true for $s=r-1\ge0$,
we have that $\prod_{i=1}^{\sum_{t=1}^{s-1}n_{t}l_{t}}T_{w}\left(a\right)_{i}\ge\prod_{i=1}^{\sum_{t=1}^{s-1}n_{t}l_{t}}a_{i}$.
Therefore, $T_{w}\left(a\right)\ge a$.
\end{description}
From discussion in case 1b see that $T_{w}\left(a\right)=a$ if and
only if case 1b applies to the last block of $a=\prod_{i=1}^{r}p_{j}^{n_{i}}$
for all $1\le r\le s$, i.e. if and only if $w\in\sum_{n_{1}}\times\cdots\times\sum_{n_{s}}$.
\end{proof}

\section{Classical shuffle algebras}

Considering that classical shuffle algebras can be obtained as the
$q\rightarrow1$ limit of quantum shuffle algebras and that our objective
in this section is to summarize some facts about classical shuffle
algebras, we will be concise here and refer the reader to Quantum
Shuffle Algebras section (or to \cite{Radford}) for proofs.

\subsection{Definition and notes}
\begin{defn}[Classical shuffle algebra \cite{Radford}]

A classical shuffle algebra $Sh\left(V\right)$ of $V$ is a commutative
strictly graded pointed irreducible Hopf algebra with shuffle product:

for $x_{1},\cdots,x_{n}$ in $V$,
\[
\left(x_{1}\otimes\cdots\otimes x_{p}\right)\cdot\left(x_{p+1}\otimes\cdots\otimes x_{n}\right)=\sum_{w\in\sum_{p,n-p}}T_{w}\left(x_{1}\otimes\cdots\otimes x_{n}\right)
\]

As a vector space, $Sh\left(V\right)=T\left(V\right)$. 
\end{defn}

In classical shuffle algebras, the $T_{w}$'s are actually $\left(l_{1},\cdots,l_{r}\right)$-shuffles.
The lift $T_{w}$ of $\left(l_{1},\cdots,l_{r}\right)$-shuffle $w$
to the braid group represents an increase in the structure complexity
with the purpose of implementing a quantum deformation in quantum
shuffle algebras. Viewing the classical shuffle algebras as quantum
shuffle algebras in the limit $q\rightarrow1$, for consistent notation
between classical and quantum shuffle algebras, we may either (1)
by abuse of notation, formally set $T_{w}=w$; or (2) use the braiding
relation $\sigma\left(v\otimes w\right)=w\otimes v$ in the classical
shuffle algebras -- these two options are practically equivalent.
\begin{prop}[Embedding bialgebras into the classical shuffle algebra \cite{Radford}]
\textcolor{white}{\scriptsize{}T}

\begin{enumerate}
\item Let $A$ be a sub-bialgebra of $Sh\left(V\right)$ for some vector
space $V$ over a field $k$. Then $A$ is a commutative pointed irreducible
Hopf algebra. If $\mathrm{char\:}k=p>0$, then $x^{p}=0$ for $x\in A^{+}$.
\item Let $V=P\left(A\right)$ be the space of primitives of a commutative
pointed irreducible Hopf algebra $A$. If $\mathrm{char\:}k=0$ or
$\mathrm{char\:}k=p>0$ and $x^{p}=0$ for $x\in A^{+}$, then $A$
is isomorphic to a sub-Hopf algebra of $Sh\left(V\right)$.
\end{enumerate}
\end{prop}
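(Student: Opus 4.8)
The plan is to treat the two implications separately, and in both to exploit that $Sh(V)=T(V)$, equipped with the deconcatenation coproduct, is a \emph{connected graded} bialgebra: its degree-zero part is $k\cdot 1$, so its coradical is the one-dimensional grouplike $k\cdot 1$, making $Sh(V)$ pointed irreducible, and it is commutative by the commutativity of the shuffle product. For part (1), commutativity and pointed-irreducibility pass to any sub-bialgebra $A$ for free: commutativity is inherited, and since the coradical of $A$ is a cosemisimple subcoalgebra of $k\cdot 1$ it is again one-dimensional, so $A$ is pointed irreducible. Being a connected (conilpotent) bialgebra, $A$ then admits an antipode via the standard recursion along the coradical filtration, so $A$ is a Hopf algebra. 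For the characteristic-$p$ claim, I would show the $p$-fold shuffle power of every positive-degree element of $Sh(V)$ vanishes: the identity $(a+b)^{p}=a^{p}+b^{p}$, valid in the commutative $\mathbb{F}_{p}$-algebra $Sh(V)$, reduces the question to a single word $v_{w}$, and a direct count (the coefficient is $p!$ for a one-letter word, and a multinomial divisible by $p$ in general, arising from the free $\mathbb{Z}/p$-action permuting the $p$ equal factors) shows $v_{w}^{\cdot p}$ has all coefficients divisible by $p$. Restricting to $A^{+}\subseteq Sh(V)^{+}$ yields $x^{p}=0$.

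For part (2) the plan is to construct an explicit bialgebra embedding $\phi:A\to Sh(V)$ with $V=P(A)$. The engine is cofreeness of the cotensor coalgebra: since $A$ is pointed irreducible it is conilpotent, so any linear map $P:A\to V$ with $P(1)=0$ lifts to a \emph{unique} coalgebra map $\phi:A\to Sh(V)$ satisfying $\pi_{1}\circ\phi=P$, where $\pi_{1}$ is the projection onto the degree-one part $V$. The first substantive step is to choose $P$ carefully: I want a projection onto the primitives with $P(1)=0$, $P|_{V}=\mathrm{id}$, and $(A^{+})^{2}\subseteq\ker P$. Such a $P$ exists exactly when $P(A)\cap(A^{+})^{2}=0$, i.e. when the natural map $P(A)\to A^{+}/(A^{+})^{2}$ from primitives to indecomposables is injective. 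Proving this injectivity is where commutativity and the characteristic hypothesis are indispensable, and I expect it to be the main obstacle: in characteristic $0$ it is the commutative Leray/Milnor--Moore phenomenon, whereas in characteristic $p$ the primitives that could land in $(A^{+})^{2}$ are accounted for by $p$-th powers of primitives, and the hypothesis $x^{p}=0$ on $A^{+}$ removes precisely these. I anticipate the bulk of the labour to be a coradical-filtration induction establishing $P(A)\cap(A^{+})^{2}=0$.

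With $P$ fixed and $\phi$ defined, I would verify that $\phi$ is an algebra map by a uniqueness argument inside the cofree coalgebra. Both $\phi\circ m_{A}$ and $m_{Sh}\circ(\phi\otimes\phi)$ are coalgebra maps $A\otimes A\to Sh(V)$, since multiplication is a coalgebra morphism in any bialgebra and $A\otimes A$ is again conilpotent; by uniqueness they coincide as soon as they agree after composing with $\pi_{1}$. On $x\otimes y$ the first gives $P(xy)$ and the second gives $\epsilon(x)P(y)+\epsilon(y)P(x)$, because only the (degree $0$)$\cdot$(degree $1$) contributions to a shuffle product survive in degree one. These agree, since $P(1)=0$ and $(A^{+})^{2}\subseteq\ker P$ force $P(xy)=0$ for $x,y\in A^{+}$. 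Hence $\phi$ preserves products and the unit, so it is a bialgebra homomorphism.

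Finally I would prove $\phi$ injective. As a coalgebra map between connected coalgebras, $\phi$ is injective once it is injective on primitives; and the primitives of $Sh(V)$ are exactly the degree-one part $V$, while $\pi_{1}\circ\phi|_{P(A)}=P|_{V}=\mathrm{id}_{V}$, so $\phi$ carries $P(A)=V$ isomorphically onto $V\subseteq Sh(V)$. Injectivity on primitives then propagates up the coradical filtration by induction. Thus $\phi$ is an injective bialgebra map, its image is a sub-bialgebra of $Sh(V)$, and by part (1) that image is a Hopf subalgebra isomorphic to $A$, which completes the embedding. The only genuinely delicate ingredient is the primitive--indecomposable injectivity of the second paragraph; every other step is a formal consequence of cofreeness and the bialgebra axioms.
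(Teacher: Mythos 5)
The paper offers no proof of this proposition at all: it is stated as a literature citation, deferring entirely to \cite{Radford}. So there is no in-paper argument to compare against; what you have written is, in outline, precisely the argument of the cited source (Radford's proof runs through Sweedler's theory of the cofree pointed irreducible coalgebra, exactly as you propose). Your part (1) is complete and correct as sketched: the coradical of a subcoalgebra lies in the coradical of the ambient coalgebra, so $A$ is pointed irreducible, connectedness yields the antipode by the usual convolution recursion, and your divisibility argument is sound --- the coefficient of a word $b$ in $v_{w}^{\cdot p}$ counts ordered $p$-tuples of disjoint increasing position-subsets each spelling $w$, disjointness makes the $S_{p}$-action (hence the $\mathbb{Z}/p$-action) on such tuples free, so every coefficient is divisible by $p!$ and vanishes in characteristic $p$; the freshman's-dream reduction then needs only commutativity of the classical shuffle product, which holds. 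Your multiplicativity-by-uniqueness argument in part (2) is also right, including the computation showing the two coalgebra maps $A\otimes A\to Sh(V)$ agree after $\pi_{1}$ exactly when $P(1)=0$ and $P((A^{+})^{2})=0$, and the injectivity step via the standard lemma that a coalgebra map out of a pointed irreducible coalgebra is injective once injective on $k1\oplus P(A)$.

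The one genuine gap is the load-bearing lemma you yourself flag: $P(A)\cap(A^{+})^{2}=0$ under the stated hypotheses. You correctly identify that this is where commutativity and the characteristic enter ($U(\mathfrak{g})$ shows commutativity is needed, since $[x,y]\in P(A)\cap(A^{+})^{2}$; and $k[x]/(x^{p^{2}})$ shows the characteristic-$p$ hypothesis is needed, since there $x^{p}$ is primitive and decomposable), but you give only a one-sentence plan (``coradical-filtration induction'') for what is the entire mathematical content of part (2). Be warned that in characteristic $p$ the slogan ``the offending primitives are accounted for by $p$-th powers of primitives, and $x^{p}=0$ removes precisely these'' is the conclusion one wants, not an argument: the induction must rule out \emph{all} primitives slipping into $(A^{+})^{2}$, and in Radford's treatment this rests on Sweedler's divided-power-sequence machinery for commutative pointed irreducible Hopf algebras rather than on a direct filtration count. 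Until that lemma is proved, your part (2) is an architecture --- the correct one, matching the cited source --- with its keystone still missing.
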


\subsection{Bases in terms of Lyndon words}

Here we continue where we left off in the previous section.
\begin{lem}[The $\alpha_{aa}$ coefficient in $X_{a}$ (see Proposition \ref{prop:BasesCSA})]

Let $a\in S$, and let $p_{1}^{n_{1}}\cdots p_{s}^{n_{s}}=a$ be its
prime factorization $\left(p_{1}<\cdots<p_{s}\right)$. Then the number
of $\sigma$'s in $\mathscr{B}_{\left(l_{1},\cdots,l_{s}\right)}$
such that $\sigma\left(a\right)=a$ is $n_{1}!\cdots n_{s}!$.
\end{lem}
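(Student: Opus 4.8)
The plan is to reduce the statement to Theorem \ref{thm:TwAA}(2) together with a finite group-order computation. First I would unwind the notation. By definition $\mathscr{B}_{(l_1,\cdots,l_s)}=\sum_{w\in\sum_{(l_1,\cdots,l_s)}}T_w$, so its summands $\sigma$ are indexed by the $(l_1,\cdots,l_s)$-shuffles $w$, and the assignment $w\mapsto T_w$ is injective: the canonical projection $B_n\to\sum_n$ returns $T_w$ to $w$, so distinct shuffles give distinct summands. A summand $\sigma=T_w$ satisfies $\sigma(a)=a$ precisely when $T_w(a)=a$, so the quantity to be computed is the number of $w\in\sum_{(l_1,\cdots,l_s)}$ with $T_w(a)=a$.

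Next I would invoke Theorem \ref{thm:TwAA}(2): for a shuffle $w$, one has $T_w(a)=a$ if and only if $w$ lies in the block-permutation subgroup $\sum_{n_1}\times\cdots\times\sum_{n_s}$, i.e. $w$ only permutes the equal prime factors $p_i$ among themselves. Hence the set to be counted is the intersection of $\sum_{(l_1,\cdots,l_s)}$ with $\sum_{n_1}\times\cdots\times\sum_{n_s}$. The step I expect to demand the most care is showing that this intersection is all of $\sum_{n_1}\times\cdots\times\sum_{n_s}$, equivalently that every block permutation is already a shuffle. For this I would write $a$ as the concatenation of its $n_1+\cdots+n_s$ prime blocks and note that a block permutation moves each prime block, a contiguous run of some length $l_i$, order-preservingly onto another occurrence of the same prime $p_i$, which has the same length $l_i$. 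Its restriction to each source block is then an increasing bijection onto a contiguous interval, which is exactly the monotonicity that defines a shuffle; therefore $\sum_{n_1}\times\cdots\times\sum_{n_s}\subseteq\sum_{(l_1,\cdots,l_s)}$.

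With this inclusion the intersection is the entire block-permutation subgroup, and I would conclude by computing its order, $\left|\sum_{n_1}\times\cdots\times\sum_{n_s}\right|=\prod_{i=1}^{s}n_i!=n_1!\cdots n_s!$. Together with the injectivity of $w\mapsto T_w$ noted above, this yields exactly $n_1!\cdots n_s!$ summands $\sigma$ of $\mathscr{B}_{(l_1,\cdots,l_s)}$ with $\sigma(a)=a$, as claimed. The sole nontrivial ingredient beyond Theorem \ref{thm:TwAA} is the shuffle-membership of block permutations; the remainder is notational bookkeeping and a direct count.
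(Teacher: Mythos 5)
Your proof is correct, but it takes a more direct route than the paper, whose own proof of this lemma is only a deferral (``trivial; the method of mathematical induction may be used'') pointing to the quantum shuffle section and to Radford. The substantive count in the paper is carried out inside the proof of Proposition \ref{prop:BasesQSA}: after the same reduction via Theorem \ref{thm:TwAA}(2) to block permutations of a single repeated prime $p_1^{n_1}$, the paper argues by induction on $n_1$, using the inclusion $B_r\rightarrow B_{r+1}$ and composing the block permutations of the first $r$ copies with the insertion of the last copy into each of the $r+1$ possible positions, tracking the braiding factor each insertion contributes; this computes the $q$-analogue $\left[n_1\right]_{Q_1}!$, and the classical lemma is its specialization at $Q=1$, where the Mahonian factorial degenerates to $n_1!$. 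You bypass the induction entirely: after invoking Theorem \ref{thm:TwAA}(2) you show the fixing set is not merely contained in but equal to $\sum_{n_1}\times\cdots\times\sum_{n_s}$, because a block permutation restricted to each source block is an increasing map onto a contiguous interval and hence already a shuffle, and then you simply take the order of the subgroup. Your version buys a shorter, purely combinatorial argument, and it makes explicit two points the paper leaves implicit: the injectivity of $w\mapsto T_w$ (so summands of $\mathscr{B}_{\left(l_{1},\cdots,l_{s}\right)}$ are faithfully indexed by shuffles) and the membership of every block permutation in the shuffle set, without which the count could a priori be smaller. The paper's insertion-by-insertion induction buys more in exchange for its length: it computes the weighted sum needed in the quantum case of Proposition \ref{prop:BasesQSA}(3), of which this lemma is the $q\rightarrow1$ shadow, whereas your direct count of the subgroup order would need to be re-weighted to recover that $q$-analogue.
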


\begin{proof}
Trivial. The method of mathematical induction may be used. See Quantum
Shuffle Algebras II section or \cite[p. 445]{Radford} for details.
\end{proof}
\begin{thm}[Bases of classical shuffle algebras in terms of Lyndon words \cite{Radford}]
\label{prop:BasesCSA}

Assume $\mathrm{char\:}k=0$. Let $a\in S\left(u\right)$ and $a=\prod_{i=1}^{s}p_{i}^{n_{i}}$
be its unique prime factorization. We define $X_{a}=\prod_{i=1}^{s}v_{p_{i}}^{n_{i}}$,
where quantum multiplication is used between the terms of the form
$v_{p_{i}}$. Then:

\begin{enumerate}
\item $X_{a},a\in S\left(u\right)$ form a basis of $T\left(V\right)\left(u\right)$;
and
\item the change of basis with respect to $a$ is triangular, i.e. there
exist $\alpha_{ab}\in k$ such that $X_{a}=\sum_{a\le b}\alpha_{ab}v_{b}$;
\item setting for $1\le i\le s$ $p_{i}=\prod_{j=1}^{l_{i}}x_{ij}$, we
have $\alpha_{aa}=\prod_{j=1}^{s}\left(n_{j}!\right)\ne0$;
\item the $X_{a}$'s, $a\in S$, form a linear basis of $T\left(V\right)$;
\item the $v_{p}$'s, $p\in P$, form a polynomial basis for $T\left(V\right)$;
\item $X_{a}\cdot X_{b}=X_{a\cdot b}$ for $a,b\in S$.
\end{enumerate}
\end{thm}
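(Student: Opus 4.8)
The plan is to establish (2) and (3) first, since the remaining items are linear-algebra consequences of these together with the $U$-grading and the commutativity of $Sh(V)$. First I would collapse the iterated shuffle product defining $X_a$ into a single braided sum: by associativity of the shuffle product (the two-factor formula of the Shuffle Product Proposition, iterated), $X_a=v_{p_1}^{n_1}\cdots v_{p_s}^{n_s}=\mathscr{B}_{(l_1,\cdots,l_s)}(a)=\sum_{w\in\sum_{(l_1,\cdots,l_s)}}T_w(a)$, where each $l_i$ is repeated $n_i$ times in the composition and $a=\prod_i p_i^{n_i}$ is precisely the word being shuffled. In the classical setting $T_w=w$, so $X_a=\sum_w v_{w(a)}$. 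Theorem \ref{thm:TwAA} now gives $w(a)\ge a$ for every such $w$, with $w(a)=a$ exactly when $w$ lies in the block subgroup $\sum_{n_1}\times\cdots\times\sum_{n_s}$. Grouping equal values of $w(a)$ yields $X_a=\alpha_{aa}v_a+\sum_{b>a}\alpha_{ab}v_b$ with $\alpha_{ab}\in k$, which is the triangularity of (2); and the preceding Lemma (a count of the block permutations fixing $a$) identifies the leading coefficient as $\alpha_{aa}=n_1!\cdots n_s!$, which is nonzero since $\mathrm{char}\,k=0$, giving (3).

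For (1) I would observe that $X_a\in T(V)(u)$ whenever $a\in S(u)$, because each $w(a)$ is a rearrangement of the letters of $a$ and hence lies in $S(u)$; so the finite families $\{v_b:b\in S(u)\}$ (the defining basis of $T(V)(u)$) and $\{X_a:a\in S(u)\}$ both live in the finite-dimensional space $T(V)(u)$. By (2)--(3) the transition matrix between them, indexed by the finite totally ordered set $(S(u),\le)$, is triangular with nonzero diagonal, hence invertible over $k$; therefore $\{X_a:a\in S(u)\}$ is a basis of $T(V)(u)$. Item (4) follows by assembling over the grading of Definition \ref{def:uG}: since $T(V)=\bigoplus_{u}T(V)(u)$ and $S=\bigsqcup_{u}S(u)$, the union of the per-grade bases is a basis of the positive-degree part, and adjoining the unit of $V^{\otimes0}=k$ covers all of $T(V)$. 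For (5), commutativity of $Sh(V)$ and the canonical form of Proposition \ref{prop:FormUPF} give a bijection between $S$ and the nonempty multisets of primes; under it $X_a$ is exactly the monomial $\prod_{p\in P}v_p^{\mu_a(p)}$, where $\mu_a(p)$ is the multiplicity of $p$ in the factorization of $a$. As $a$ runs over $S$ these exhaust the nonconstant monomials without repetition, and by (4) they are linearly independent and span; a commutative algebra whose distinct monomials in a generating set form a linear basis is the polynomial algebra on that set, so the $v_p$ are a polynomial basis.

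Finally (6), which is the one genuinely delicate point and turns entirely on reading the product correctly. The symbol $a\cdot b$ here is \emph{not} the concatenation of the free semigroup $S$: concatenation can fuse factors at the junction (for instance over $\{x<y\}$ the word $y\cdot x=yx$ is itself a prime, so $X_{yx}=v_{yx}$ has a single term, whereas $X_y\cdot X_x=v_y\cdot v_x=v_{yx}+v_{xy}$), and in any case $X_a\cdot X_b=X_b\cdot X_a$ forces the right-hand side to be symmetric in $a$ and $b$, which concatenation is not. The reading consistent with (5) is that $a\cdot b$ denotes the product in the free commutative monoid on $P$ transported to $S$ by unique factorization: $a\cdot b$ is the unique word with $\mu_{a\cdot b}(p)=\mu_a(p)+\mu_b(p)$ for all primes $p$ (equivalently, the word whose prime multiset is $M(a)\uplus M(b)$). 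With this reading the identity is immediate from associativity and commutativity of the shuffle product, $X_a\cdot X_b=\prod_p v_p^{\mu_a(p)}\cdot\prod_p v_p^{\mu_b(p)}=\prod_p v_p^{\mu_a(p)+\mu_b(p)}=X_{a\cdot b}$, so $a\mapsto X_a$ is a monoid isomorphism onto the monomial basis, which is precisely the joint content of (5) and (6). The main obstacle is thus not a computation—Theorem \ref{thm:TwAA} already supplies the triangular estimate powering (1)--(4)—but the conceptual one of fixing the correct commutative product in (6) and recognizing that the braided sum defining $X_a$ degenerates, in characteristic zero, to a triangular change of basis with invertible diagonal.
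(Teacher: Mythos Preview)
Your argument for (1)--(5) is essentially the paper's: the paper defers to the proof of Proposition~\ref{prop:BasesQSA}, which writes $X_a=\mathscr{B}_{(l_1,\ldots,l_s)}(v_a)$, invokes Theorem~\ref{thm:TwAA} for the triangularity (2), computes the diagonal coefficient via the block-permutation count for (3), deduces (1) from the invertible triangular transition matrix on the finite-dimensional graded piece, and reads off (4) and (5) from the $U$-grading and the monomial form of $X_a$. Your treatment is a faithful specialization of that argument to the classical case, where $T_w=w$ and the Mahonian factor collapses to $n_j!$.

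On (6) you are actually more careful than the paper. The paper's proof of (6) is literally the one-line computation you write at the end --- $X_a\cdot X_b=\prod_i v_{p_i}^{n_i}\cdot\prod_j v_{p'_j}^{n'_j}$, ``but that is $X_{a\cdot b}$ by definition'' --- with no discussion of how $a\cdot b$ is to be read. Your counterexample $X_y\cdot X_x=v_{yx}+v_{xy}\neq v_{yx}=X_{yx}$ over $\{x<y\}$ shows the identity fails if $a\cdot b$ is semigroup concatenation, and your reinterpretation of $a\cdot b$ as the product in the free commutative monoid on $P$ (transported back to $S$ by unique factorization) is exactly the reading under which the paper's one-line argument becomes correct and under which (6) is consistent with (5) and with the commutativity of $Sh(V)$. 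So for (1)--(5) your proof coincides with the paper's, and for (6) you have surfaced and repaired an ambiguity that the paper's own proof glosses over.
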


\begin{proof}
See the Proposition \ref{prop:BasesQSA} or \cite[p. 446]{Radford}.
\end{proof}
A similar theorem has been formulated in \cite[p. 447]{Radford} for
the case $\mathrm{char\:}k=p>0$.

Point 5 in Theorem \ref{prop:BasesCSA} elucidates the reason why
we call this construction of bases in terms of Lyndon words. While
any $v_{a},a\in S$ has a unique prime factorization of $a$ in terms
of Lyndon words, the resulting $v_{p}$ components compose $v_{a}$
using tensor multiplication, but for a polynomial basis, the operation
that should be used is multiplication. This is satisfied with shuffle
multiplication in Theorem \ref{prop:BasesCSA}.

\section{Quantum shuffle algebras}

We will follow the fundamental work of Rosso \cite{MRosso} towards
the definition of a quantum shuffle algebra. Afterwards, we will provide
formulations and our proofs for construction of bases of quantum shuffle
algebras in terms of Lyndon words.

\subsection{Hopf bimodules}

We will first define Hopf bimodules and provide an overview of their
relevant structure.
\begin{defn}[{Hopf bimodule \cite[p. 401]{MRosso}}]

Let $H$ be a $k$-Hopf algebra. A Hopf bimodule over $H$ is a $k$-vector
space $M$ given with an $H$-bimodule structure, a $H$-bicomodule
structure (i.e. left and right coactions $\delta_{L}:M\rightarrow H\otimes M$,
$\delta_{R}:M\rightarrow M\otimes H$ which commute in the following
sense: $\left(\delta_{L}\otimes Id\right)\delta_{R}=\left(Id\otimes\delta_{R}\right)\delta_{L}$,
and such that $\delta_{L}$ and $\delta_{R}$ are morphisms of $H$-bimodules.
\end{defn}

Taking tensor products over Hopf algebra $H$, Hopf bimodules form
a tensor category $\mathscr{E}$ \cite{MRosso,nichols}.
\begin{defn}[{Left and coinvariants of a Hopf bimodule \cite[p. 402]{MRosso}}]
\textcolor{white}{\scriptsize{}T}

\begin{enumerate}
\item The left coinvariant subspace $M^{L}$ of $M$ is defined as 
\[
M^{L}=\left\{ m\in M|\delta_{L}\left(m\right)=1\otimes m\right\} 
\]
It is a sub-right comodule of $M$ and inherits a structure of right
$H$-module by
\[
m\cdot h=\sum S\left(h_{\left(1\right)}\right)mh_{\left(2\right)}
\]
where $m\in M$ and $h\in H$.
\item The right coinvariant subspace $M^{R}$ of $M$ is defined as 
\[
M^{R}=\left\{ m\in M|\delta_{R}\left(m\right)=m\otimes1\right\} 
\]
It is a sub-left comodule of $M$ and inherits a structure of left
$H$-module by
\[
h\cdot m=\sum h_{\left(1\right)}mS\left(h_{\left(2\right)}\right)
\]
where $m\in M$ and $h\in H$.
\end{enumerate}
\end{defn}

\begin{prop}[{Properties of the right coinvariant \cite[p. 402]{MRosso}}]
\textcolor{white}{\scriptsize{}T}

\begin{enumerate}
\item The right coinvariant $M^{R}$ of a Hopf bimodule $M$ over $H$ is
a crossed module over $H$ in the sense of Yetter.
\item If $H$ and $M$ are finite-dimensional, it is a module over the quantum
double.
\item A morphism of Hopf bimodules induces on the space of right coinvariants
a morphism of crossed modules.
\end{enumerate}
\end{prop}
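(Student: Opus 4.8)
The plan is to establish the three items in order, placing essentially all of the computational weight on item (1) and then deducing items (2) and (3) from it together with standard facts. For item (1), I would first fix the structures to be checked. A crossed module (Yetter--Drinfeld module) over $H$ is a space that is simultaneously a left $H$-module and a left $H$-comodule whose action and coaction obey the Yetter compatibility condition; writing the restricted left coaction as $\delta_{L}\left(m\right)=\sum m_{\left(-1\right)}\otimes m_{\left(0\right)}$, this condition reads $\delta_{L}\left(h\cdot m\right)=\sum h_{\left(1\right)}m_{\left(-1\right)}S\left(h_{\left(3\right)}\right)\otimes h_{\left(2\right)}\cdot m_{\left(0\right)}$. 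On $M^{R}$ the candidate module structure is the adjoint action $h\cdot m=\sum h_{\left(1\right)}mS\left(h_{\left(2\right)}\right)$ from the definition of the right coinvariant, and the candidate comodule structure is the restriction of $\delta_{L}$. Before anything else I would check that both structures actually close on $M^{R}$: that $h\cdot m$ again lies in $M^{R}$, and that $\delta_{L}$ maps $M^{R}$ into $H\otimes M^{R}$.

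The first of these closure checks is a short Sweedler computation: applying $\delta_{R}$ to $h\cdot m$ and using that $\delta_{R}$ is a morphism of $H$-bimodules (so $\delta_{R}$ of a product expands through the tensor-product bimodule structure on $M\otimes H$), then invoking $\delta_{R}\left(m\right)=m\otimes1$ for $m\in M^{R}$, the anti-comultiplicativity of the antipode, and the antipode axiom on the inner factors, collapses the $H$-component to $1$ and leaves $\delta_{R}\left(h\cdot m\right)=\left(h\cdot m\right)\otimes1$. The second closure check follows from the commuting of the two coactions, $\left(\delta_{L}\otimes\mathrm{Id}\right)\delta_{R}=\left(\mathrm{Id}\otimes\delta_{R}\right)\delta_{L}$, evaluated on $m\in M^{R}$: the left-hand side becomes $\delta_{L}\left(m\right)\otimes1$, forcing each $m_{\left(0\right)}$ to be right-coinvariant.

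The substance of item (1), and the step I expect to be the main obstacle, is the verification of the Yetter compatibility condition itself. The key tool is again that $\delta_{L}$ is a morphism of $H$-bimodules, so $\delta_{L}\left(\sum h_{\left(1\right)}mS\left(h_{\left(2\right)}\right)\right)$ expands, via the tensor-product bimodule structure on $H\otimes M$ and the anti-comultiplicativity of $S$, to $\sum h_{\left(1\right)}m_{\left(-1\right)}S\left(h_{\left(4\right)}\right)\otimes h_{\left(2\right)}m_{\left(0\right)}S\left(h_{\left(3\right)}\right)$; writing out the adjoint action on the right-hand side of the compatibility condition and regrouping via coassociativity produces exactly the same expression, so the two sides agree. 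The care required in tracking the iterated coproduct indices and the reversal they undergo under $S$ is where the argument is most error-prone, which is why I single this step out.

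For item (2), I would invoke the standard equivalence between the category of crossed modules over $H$ and the category of modules over the quantum double $D\left(H\right)$, which requires $H$ to be finite-dimensional for $D\left(H\right)$ to be defined; this equivalence is set out in \cite{kassel}. Item (1) then exhibits $M^{R}$ as a crossed module, hence as a $D\left(H\right)$-module, with finite-dimensionality of $M$ ensuring it is a finite-dimensional one. For item (3), let $f\colon M\to N$ be a morphism of Hopf bimodules. Since $f$ is a right comodule map, $\delta_{R}^{N}\!\left(f\left(m\right)\right)=\left(f\otimes\mathrm{Id}\right)\delta_{R}^{M}\!\left(m\right)=f\left(m\right)\otimes1$ for $m\in M^{R}$, so $f$ restricts to a map $M^{R}\to N^{R}$. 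This restriction respects the adjoint action because $f$ is an $H$-bimodule map, and it respects the left coaction because $f$ is a left comodule map; hence $f|_{M^{R}}$ is a morphism of crossed modules, which is what item (3) asserts.
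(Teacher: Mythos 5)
Your proposal is correct, but note that the paper offers no proof of this proposition at all — it is quoted verbatim from Rosso \cite[p. 402]{MRosso} — so there is nothing internal to compare against; your argument is the standard direct verification (closure of the adjoint action $h\cdot m=\sum h_{\left(1\right)}mS\left(h_{\left(2\right)}\right)$ and of $\delta_{L}$ on $M^{R}$, the Sweedler-index check of the Yetter condition, then the double via the finite-dimensional equivalence) and is essentially the proof found in Rosso's paper itself. The only point worth tightening is the second closure check: from $\sum m_{\left(-1\right)}\otimes\delta_{R}\left(m_{\left(0\right)}\right)=\sum m_{\left(-1\right)}\otimes m_{\left(0\right)}\otimes1$ one should explicitly pass to a representation of $\delta_{L}\left(m\right)$ with linearly independent first legs before concluding that each $m_{\left(0\right)}$ lies in $M^{R}$.
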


\subsection{Braidings in the category of Hopf bimodules}

Here we establish that the right coinvariant space $M^{R}$ of a Hopf
bimodule $M$ has a naturally defined braiding $\sigma\left(v\otimes w\right)\mapsto w\otimes v$,
where $v,w\in M^{R}$. This braiding is a foundation for construction
of a quantum shuffle algebra.
\begin{prop}[{Braiding in the category of Hopf bimodules \cite[p. 403]{MRosso}}]

Let $M$ and $N$ be $H$-Hopf bimodules.

\begin{enumerate}
\item There exists a unique morphism of $H$-bimodules $\sigma_{M,N}:M\otimes N\rightarrow N\otimes M$
such that, for $\omega\in M^{L}$ and $\eta\in M^{R}$ we have $\omega_{M,N}\left(\omega\otimes\eta\right)=\eta\otimes\omega$.
\item Furthermore, $\omega_{M,N}$ is an invertible morphism of bicomodules
and satisfies the following braid equation (where $M$, $N$, and
$P$ are Hopf bimodules):
\begin{multline*}
\left(I_{P}\otimes\omega_{M,N}\right)\left(\omega_{M,P}\otimes I_{N}\right)\left(I_{M}\otimes\omega_{N,P}\right)=\\
=\left(\omega_{N,P}\otimes I_{M}\right)\left(I_{N}\otimes\omega_{M,P}\right)\left(\omega_{M,N}\otimes I_{P}\right)
\end{multline*}
\end{enumerate}
\end{prop}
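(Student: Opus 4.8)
The plan is to pin down $\sigma_{M,N}$ on the coinvariant subspaces, where it is forced to be the bare flip, and then to propagate it to all of $M\otimes N$ by linearity, using the fundamental theorem of Hopf modules as the structural engine. For a Hopf bimodule $M$, that theorem provides isomorphisms
\[
M^{L}\otimes H\xrightarrow{\ \sim\ }M,\qquad H\otimes M^{R}\xrightarrow{\ \sim\ }M,\qquad N^{R}\otimes H\xrightarrow{\ \sim\ }N
\]
given by the module actions, so that $M$ is free as a right (resp.\ left) $H$-module on its coinvariants, and likewise for $N$. I would take these decompositions as the main input, together with the bimodule/bicomodule compatibility $(\delta_{L}\otimes\mathrm{Id})\delta_{R}=(\mathrm{Id}\otimes\delta_{R})\delta_{L}$ and the antipode identities.

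Uniqueness comes first and is the easy half. Since $N\cong N^{R}\otimes H$, the image of $M^{L}\otimes N^{R}$ in $M\otimes N$ generates the whole space under the right $H$-action: any element is a right-$H$-combination of the $\omega\otimes\eta$ with $\omega\in M^{L}$, $\eta\in N^{R}$. A morphism of $H$-bimodules is in particular right $H$-linear, so prescribing $\sigma_{M,N}(\omega\otimes\eta)=\eta\otimes\omega$ already determines it everywhere; hence at most one such morphism exists.

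For existence I would write down the braiding explicitly --- the natural closed formula feeds the coaction of the first tensor factor into the action on the second --- and check directly that it is a morphism of $H$-bimodules restricting to the flip on $M^{L}\otimes N^{R}$. Equivalently, one defines $\sigma_{M,N}$ as the flip followed by the right action through the free presentation $M\otimes N\cong M^{L}\otimes N^{R}\otimes H$, where right $H$-linearity and well-definedness are automatic, and then proves the nontrivial point that this right-linear map is \emph{also} left $H$-linear. This is the main obstacle: left linearity is where the full Hopf-bimodule compatibility is genuinely consumed, because the left action of $H$ does not preserve $M^{L}$ and must be commuted through the coactions and the antipode. The preceding proposition, which equips $M^{R}$ with a crossed-module (Yetter) structure, is exactly what makes this computation go through, since on coinvariants $\sigma_{M,N}$ is modeled on the standard crossed-module braiding.

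The three assertions of part (2) I would then obtain by the same ``two morphisms that agree on the generating coinvariants must coincide'' principle. For invertibility, the symmetric construction gives a candidate inverse $N\otimes M\to M\otimes N$ restricting to the flip on $N^{R}\otimes M^{L}$; both composites are $H$-bimodule endomorphisms equal to the identity on the coinvariant generators, hence equal to the identity, and the existence of the inverse is guaranteed by the invertibility of the antipode $S$. That $\sigma_{M,N}$ is a bicomodule morphism is checked by comparing the two bimodule maps $(\mathrm{Id}\otimes\sigma_{M,N})\circ\delta$ and $\delta\circ\sigma_{M,N}$ on $M^{L}\otimes N^{R}$, where both collapse to the flip composed with the coactions. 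Finally, for the braid equation both sides are morphisms of $H$-bimodules $M\otimes N\otimes P\to P\otimes N\otimes M$, so it suffices to verify the identity on the fully coinvariant part of the triple tensor product; there every occurrence of $\sigma$ degenerates to a transposition and the braid equation reduces to the trivial relation $(1\,2)(2\,3)(1\,2)=(2\,3)(1\,2)(2\,3)$ in the symmetric group. The delicate point to watch is that the flips keep the intermediate vectors coinvariant, so that each successive $\sigma$ does act as a plain transposition; once that bookkeeping is set up, the braid relation is inherited from the symmetric-group level.
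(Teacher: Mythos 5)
Your overall architecture is sound and is essentially the standard (Rosso's) argument, against which this has to be judged, since the paper itself states the proposition without proof, citing \cite{MRosso}: the fundamental theorem of Hopf modules gives the free presentations $M\cong M^{L}\otimes H$, $N\cong N^{R}\otimes H$; sliding $H$ across the tensor product over $H$ shows $M^{L}\otimes N^{R}$ generates $M\otimes N$ under the right action, which gives uniqueness; existence reduces to checking left $H$-linearity of the flip-then-act map, which is indeed where the crossed-module structure on coinvariants is consumed; and your inverse is correctly prescribed as the flip on $N^{R}\otimes M^{L}$ (not $\sigma_{N,M}$, which flips $N^{L}\otimes M^{R}$ and is generally not the inverse). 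The bicomodule check on generators also goes through, because the bicomodule compatibility $\left(\delta_{L}\otimes\mathrm{Id}\right)\delta_{R}=\left(\mathrm{Id}\otimes\delta_{R}\right)\delta_{L}$ guarantees that $\delta_{L}$ preserves right coinvariance in the relevant slot.

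The braid-equation step, however, has a genuine gap. For \emph{every} occurrence of $\sigma$ in both composites to degenerate to a transposition, the middle vector $n$ must lie in $N^{L}\cap N^{R}$: the left-hand side first applies $\sigma_{N,P}$, which is the flip only for $n\in N^{L}$, while the right-hand side first applies $\sigma_{M,N}$, which is the flip only for $n\in N^{R}$. The subspace $M^{L}\otimes\left(N^{L}\cap N^{R}\right)\otimes P^{R}$ need not generate $M\otimes N\otimes P$ under the $H$-bimodule (or even Hopf-bimodule) structure, and it can be zero: take $H=kG$ and $N$ the Hopf bimodule attached to a crossed module $V=\oplus_{g}V_{g}$ concentrated in degrees $g\ne1$; then $N^{L}\cap N^{R}\cong V_{1}=0$ while $N\ne0$. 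So agreement on the ``fully coinvariant part'' proves nothing. The equalizer of the two sides is a Hopf sub-bimodule, and by the structure theorem it is everything if and only if it contains all right coinvariants of $M\otimes N\otimes P$ --- but on right coinvariants $\sigma$ acts by the crossed-module braiding $\sigma\left(x\otimes y\right)=x_{\left(-1\right)}y\otimes x_{\left(0\right)}$, not by the flip, so the verification there is a genuine computation with the Yetter--Drinfeld compatibility, not the symmetric-group relation $\left(1\,2\right)\left(2\,3\right)\left(1\,2\right)=\left(2\,3\right)\left(1\,2\right)\left(2\,3\right)$. To close the gap, either carry out that computation on right coinvariants, or prove naturality of $\sigma$ together with the hexagon identities such as $\sigma_{M\otimes N,P}=\left(\sigma_{M,P}\otimes I_{N}\right)\left(I_{M}\otimes\sigma_{N,P}\right)$ and deduce the braid relation categorically; your remaining steps can then stand as written.
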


\begin{enumerate}
\item This makes $\mathscr{E}$ a braided tensor category \cite{MRosso,kassel,montgomery}.
\item $\sigma_{M,M}$ sends $M^{R}\otimes M^{R}$ into itself, acting as
\[
\sigma\left(x\otimes y\right)=\delta_{L}\left(x\right)\left(y\otimes1\right)
\]
and defines a representation $T$ of the braid group $B_{n}$ in $\left(M^{R}\right)^{\otimes n}$.
\end{enumerate}
\begin{prop}[{Equivalence between $\mathscr{E}$ and the category of crossed modules
\cite[p. 403]{MRosso}}]

The functor sending $M$ to $M^{R}$ is an equivalence of braided
tensor categories between $\mathscr{E}$ and the category of crossed
modules.
\end{prop}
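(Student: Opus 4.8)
The plan is to exhibit an explicit quasi-inverse to the functor $F\colon\mathscr{E}\to\mathcal{YD}$, $M\mapsto M^{R}$, landing in the category $\mathcal{YD}$ of crossed modules, and then to upgrade the resulting equivalence to a \emph{braided monoidal} one by checking compatibility with the tensor products and braidings. That $F$ is well defined on objects and morphisms --- i.e.\ that $M^{R}$ is a crossed module and that a morphism of Hopf bimodules restricts to a morphism of crossed modules --- is exactly the preceding Proposition on the right coinvariant, so I would take that as given and concentrate on invertibility and on the monoidal/braided data.

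First I would construct the candidate quasi-inverse $G\colon\mathcal{YD}\to\mathscr{E}$. Given a crossed module $N$, with left action $h\otimes n\mapsto h\cdot n$ and left coaction $\delta_{L}(n)=\sum n_{(-1)}\otimes n_{(0)}$, set $G(N)=N\otimes H$ and equip it with the \emph{free} Hopf bimodule (bosonization) structure: right multiplication on the second factor, right coaction $\mathrm{id}\otimes\Delta$, and left module and left comodule structures built diagonally from the crossed-module structure of $N$ via $S$ and $\Delta$. Verifying that these four structures satisfy the bimodule, bicomodule, and morphism-of-$H$-bimodules axioms is a routine but lengthy Sweedler-notation computation, and it is precisely here that the Yetter--Drinfeld compatibility of $N$ is consumed.

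Next I would establish the two natural isomorphisms. For $FG$, a short computation of the right coinvariants $(N\otimes H)^{R}=\{\,w:\delta_{R}(w)=w\otimes 1\,\}$ identifies them with $N\otimes 1\cong N$, and one checks that the induced action and coaction on $N\otimes 1$ are the original ones on $N$; naturality in $N$ is immediate. The substantive half is $GF\cong\mathrm{Id}$, whose key input is the fundamental theorem of Hopf modules applied to $M$ regarded as a right Hopf module under right multiplication and $\delta_{R}$. It guarantees that the multiplication map
\[
\mu\colon M^{R}\otimes H\to M,\qquad m\otimes h\mapsto m\,h,
\]
is a linear isomorphism, with inverse $m\mapsto\sum m_{(0)}S(m_{(1)})\otimes m_{(2)}$, the coinvariant projection $E(m)=\sum m_{(0)}S(m_{(1)})$ landing in $M^{R}$. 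What remains, and what I expect to be the \textbf{main obstacle}, is to verify that $\mu$ simultaneously intertwines \emph{all four} Hopf-bimodule structures on $M$ with the biproduct structures on $G(M^{R})=M^{R}\otimes H$, and that it is natural in $M$; this is where the interaction between the left and right (co)module structures genuinely has to be tracked, rather than the two separate Hopf-module decompositions.

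Finally I would promote this to a braided monoidal equivalence. The tensor product in $\mathscr{E}$ is $\otimes_{H}$, and the structure isomorphism $M\cong M^{R}\otimes H$ yields $M\otimes_{H}N\cong M^{R}\otimes_{k}N$, from which I would extract a natural isomorphism $(M\otimes_{H}N)^{R}\cong M^{R}\otimes_{k}N^{R}$ realizing the monoidal constraint of $F$, with unit constraint $k\cong H^{R}$. For the braiding it then suffices to recall that $\sigma_{M,N}$ sends $M^{R}\otimes N^{R}$ into $N^{R}\otimes M^{R}$ by $\sigma(x\otimes y)=\delta_{L}(x)(y\otimes 1)$, which is exactly the Yetter--Drinfeld braiding on crossed modules; hence $F$ carries the braiding of $\mathscr{E}$ to that of $\mathcal{YD}$ on the nose. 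The last step is to confirm the coherence (hexagon and associativity) of these constraints, which follows formally once the monoidal isomorphism is shown natural and compatible with the associators inherited from $\otimes_{H}$ and $\otimes_{k}$.
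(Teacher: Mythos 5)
The paper offers no proof of this proposition at all: it is stated as a quotation of Rosso's result (\cite[p.~403]{MRosso}), so there is no in-paper argument to compare against. Your proposal reconstructs what is essentially the standard (and Rosso's) argument: quasi-inverse via the bosonization $G(N)=N\otimes H$, the natural isomorphism $FG\cong\mathrm{Id}$ by computing $(N\otimes H)^{R}=N\otimes 1$, the isomorphism $GF\cong\mathrm{Id}$ from the fundamental theorem of Hopf modules applied to $(M,\delta_{R})$ with $\mu(m\otimes h)=mh$ and inverse $m\mapsto\sum m_{(0)}S(m_{(1)})\otimes m_{(2)}$, the monoidal constraint $(M\otimes_{H}N)^{R}\cong M^{R}\otimes_{k}N^{R}$, and the identification of the restricted braiding $\sigma(x\otimes y)=\delta_{L}(x)(y\otimes 1)$ with the Yetter--Drinfeld braiding --- the last point being exactly the observation the paper itself records just before the proposition. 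The structure is sound and you correctly locate the genuinely laborious step (checking that $\mu$ intertwines all four structures; note that left $H$-linearity of $\mu$ is a two-line computation using $h\cdot m=\sum h_{(1)}mS(h_{(2)})$, and left colinearity is immediate because $\delta_{L}$ is a right-module map by the Hopf-bimodule axioms). One small correction: the antipode is not needed to \emph{define} the Hopf-bimodule structure on $N\otimes H$ --- the left action $h'(n\otimes h)=\sum h'_{(1)}\cdot n\otimes h'_{(2)}h$ and left coaction $n\otimes h\mapsto\sum n_{(-1)}h_{(1)}\otimes n_{(0)}\otimes h_{(2)}$ use only $\Delta$; $S$ enters only in the inverse of $\mu$ and in verifying that the induced adjoint action on coinvariants recovers the original crossed-module action.
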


\subsection{The cotensor Hopf algebra}

We recall that a classical shuffle algebra is, on one level, a commutative
strictly graded pointed irreducible Hopf algebra, and, on another
level, a tensor space $T\left(V\right)$ with shuffle multiplication.
Construction of a quantum shuffle algebra features a similar duality
(in the philosophical sense of the word). The technical part with
the tensor space $T\left(V\right)$, where $V=M^{R}$, is endowed
with conceptual meaning as quantum shuffle algebra using cotensor
Hopf algebra. A nontrivial isomorphism between the cotensor Hopf algebra
and $T\left(V\right)$ is a key link in this construction.
\begin{defn}[{Cotensor product and cotensor coalgebra \cite[p. 403]{MRosso}}]
\textcolor{white}{\scriptsize{}T}

\begin{enumerate}
\item Let $M$ and $N$ be $H$-Hopf bimodules. Their cotensor coproduct
$M\sqcup N$ is the kernel of 
\[
\delta_{R}\otimes I_{N}-I_{M}\otimes\delta_{L}:M\otimes N\rightarrow M\otimes H\otimes N
\]
\item The cotensor coalgebra is constructed on $M$ is 
\[
T_{H}^{c}\left(M\right)=H\oplus\oplus_{n\ge1}M^{\sqcup n}
\]
\end{enumerate}
\end{defn}

The following applies to the structure of tensor algebra 
\[
T_{H}\left(M\right)=H\oplus\oplus_{n\ge1}M^{\otimes_{H}n}
\]

\begin{enumerate}
\item If $A$ is an $H$-bimodule, $h\in H$, and $a$ and $b\in A$, then
\[
h\left(ab\right)=\sum h_{\left(1\right)}\left(a\right)h_{\left(2\right)}\left(b\right)
\]
\item If $f$ and $g:M\rightarrow A$ are two $H$-bimodule maps, the map
$f\cdot g:M\otimes_{H}M\rightarrow A$ is defined for $m\in M$ and
$n\in M$ as
\[
f\cdot g:m\otimes_{H}n\mapsto f\left(m\right)f\left(n\right)
\]
\end{enumerate}
\begin{prop}[{Power of the sum of two bimodule maps \cite[p. 404]{MRosso}}]

Let $A$ be an $H$-bimodule algebra and $f$ and $g:M\rightarrow A$
two bimodule maps. Assume that $g\cdot f=\left(f\cdot g\right)\circ\sigma$.
Then the map $\left(f+g\right)^{n}:M^{\otimes_{H}n}\rightarrow A$
is given by
\[
\left(f+g\right)^{n}=\sum_{k=0}^{n}\left(f^{k}\cdot g^{n-k}\right)\circ\tilde{\mathscr{B}}_{k,n-k}
\]
\end{prop}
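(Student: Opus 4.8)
The plan is to prove the identity by induction on $n$, thinking of $(f+g)^n$ as the convolution product $(f+g)\cdot(f+g)\cdots(f+g)$ of $n$ copies and expanding it by multilinearity into a sum of $2^n$ terms, one for each word $\epsilon_1\cdots\epsilon_n$ in the two letters $f,g$; the term of a word is the map $\epsilon_1\cdot\epsilon_2\cdots\epsilon_n:M^{\otimes_H n}\to A$. The target then amounts to collecting these words, grouped by their number $k$ of $f$'s, into the canonically ordered blocks $f^k\cdot g^{n-k}$ precomposed with braid operators. The base case $n=1$ is immediate, since $\tilde{\mathscr{B}}_{1,0}$ and $\tilde{\mathscr{B}}_{0,1}$ are each the identity and the right-hand side is $f^1\cdot g^0+f^0\cdot g^1=f+g$.

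\textbf{Inductive step.} Assuming the formula for $n-1$, I would write $(f+g)^n=(f+g)^{n-1}\cdot f+(f+g)^{n-1}\cdot g$ and substitute the hypothesis into each summand. Because $A$ is an $H$-bimodule algebra and $f,g$ are bimodule maps, the compatibility $h(ab)=\sum h_{(1)}(a)\,h_{(2)}(b)$ guarantees that a braid operator acting on the first $n-1$ strands passes through the product with a map on the last strand, so that $\big((f^{k}\cdot g^{n-1-k})\circ\tilde{\mathscr{B}}_{k,n-1-k}\big)\cdot h=\big(f^{k}\cdot g^{n-1-k}\cdot h\big)\circ(\tilde{\mathscr{B}}_{k,n-1-k}\otimes\mathrm{Id})$ for $h\in\{f,g\}$. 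In the $g$-summand the new letter already sits in canonical position ($f^{k}\cdot g^{n-1-k}\cdot g=f^{k}\cdot g^{n-k}$), so it contributes the words whose last strand is a $g$. In the $f$-summand the new $f$ lies to the right of the block of $g$'s, and I would drag it leftward into the $f$-block by repeatedly invoking the hypothesis $g\cdot f=(f\cdot g)\circ\sigma$ on adjacent strands, each transposition depositing one braid generator $\sigma_i$; this yields $f^{k+1}\cdot g^{n-1-k}$ precomposed with the braid that carries the last strand into position $k+1$, accounting for the words whose last strand is an $f$.

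\textbf{Combinatorial core.} Recombining the two branches into $\sum_{k}(f^{k}\cdot g^{n-k})\circ\tilde{\mathscr{B}}_{k,n-k}$ is exactly a Pascal-type recursion for the shuffle sums: every $(k,n-k)$-shuffle of $\{1,\dots,n\}$ either fixes $n$ (restricting to a $(k,n-1-k)$-shuffle — the $g$-branch) or sends the last element of its first block to $n$ (restricting, after deletion of that strand, to a $(k-1,n-k)$-shuffle — the $f$-branch, with the dragging braid supplying the extra generators). By the well-definedness of the Matsumoto section $T_w$ under the braid relations, the generators accumulated by the sorting assemble, independently of the order in which transpositions are performed, into a single lift indexed by the corresponding shuffle $w$ with $l(w)$ equal to the number of inversions sorted, and summing over all words with $k$ letters $f$ ranges $w$ over all $(k,n-k)$-shuffles, producing $\tilde{\mathscr{B}}_{k,n-k}$.

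\textbf{Main obstacle.} The hard part will be the bookkeeping of the braid elements in this last step: one must verify that the word accumulated while sorting a given arrangement is \emph{reduced} (so its length equals the inversion number and no cancellation occurs in the braid group, only in the symmetric group), and, crucially, that the orientation of the relation $g\cdot f=(f\cdot g)\circ\sigma$ deposits the \emph{inverse} lifts $T_w^{-1}$ rather than $T_w$, so that the sum is $\tilde{\mathscr{B}}_{k,n-k}$ and not $\mathscr{B}_{k,n-k}$. This sign/inversion convention is pinned down already at $n=2$, where $g\cdot f=(f\cdot g)\circ\sigma$ must be matched against $(f\cdot g)\circ\tilde{\mathscr{B}}_{1,1}=(f\cdot g)\circ(\mathrm{Id}+T_{s_1}^{-1})$; once the identification $\sigma\leftrightarrow T_{s_1}^{-1}$ is fixed there, Matsumoto's theorem makes the higher cases consistent and the recursion closes.
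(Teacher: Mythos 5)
The paper itself gives no proof of this proposition — it is quoted from Rosso \cite[p.~404]{MRosso} — and Rosso's own argument is essentially the induction you describe: expand $\left(f+g\right)^{n}=\left(f+g\right)^{n-1}\cdot\left(f+g\right)$, sort the trailing letter using $g\cdot f=\left(f\cdot g\right)\circ\sigma$, and recombine the two branches via the Pascal-type decomposition of $\left(k,n-k\right)$-shuffles according to whether $w\left(n\right)=n$ or $w\left(k\right)=n$. Your proposal is correct, and you rightly isolate the only delicate points — that the braid word accumulated while dragging the new $f$ past the $g$-block is reduced (so Matsumoto's section applies and the lengths add along the coset decomposition), and that the orientation of the hypothesis forces the inverse lifts $T_{w}^{-1}$, hence $\tilde{\mathscr{B}}_{k,n-k}$ rather than $\mathscr{B}_{k,n-k}$, a convention pinned down by the $n=2$ case exactly as you say.
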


\begin{lem}[{Relation between coactions and braiding \cite[p. 404]{MRosso}}]

\[
\delta_{l}\cdot\delta_{R}=\left(\delta_{R}\cdot\delta_{R}\right)\circ\sigma
\]
\end{lem}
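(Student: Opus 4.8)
My plan is to read this lemma as nothing more than the verification, in the concrete case $f=\delta_{L}$ and $g=\delta_{R}$, of the hypothesis $g\cdot f=(f\cdot g)\circ\sigma$ that the preceding proposition requires; the whole assertion is then an equality of two $H$-bimodule maps $M\otimes_{H}M\to A$ taking values in the relevant bimodule algebra $A$. First I would fix Sweedler notation $\delta_{L}(m)=\sum m_{(-1)}\otimes m_{(0)}$ and $\delta_{R}(m)=\sum m_{(0)}\otimes m_{(1)}$ and record the bicomodule compatibility $(\delta_{L}\otimes Id)\,\delta_{R}=(Id\otimes\delta_{R})\,\delta_{L}$ in its coassociative form $\sum m_{(-1)}\otimes m_{(0)}\otimes m_{(1)}$. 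This compatibility, together with the defining property of $\sigma$ established earlier, is the only structural input I expect to need; the rest is the definition of the convolution product $f\cdot g:m\otimes_{H}n\mapsto f(m)\,g(n)$ and the unit axioms in $A$.

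The key observation is that both convolution products and the braiding $\sigma$ are morphisms of $H$-bimodules --- the former by the definition of $f\cdot g$ on $M\otimes_{H}M$, the latter by part (1) of the braiding proposition --- so both maps in the statement are themselves $H$-bimodule maps, and it suffices to check their equality on a generating set. Invoking the Hopf-bimodule structure theorem ($M\cong H\otimes M^{R}$ on the left, $M\cong M^{L}\otimes H$ on the right), I would take generators of the form $\omega\otimes_{H}\eta$ with $\omega\in M^{L}$ and $\eta\in M^{R}$. On such elements the braiding collapses to its defining value $\sigma(\omega\otimes\eta)=\eta\otimes\omega$, while $\delta_{L}(\omega)=1\otimes\omega$ and $\delta_{R}(\eta)=\eta\otimes 1$ by coinvariance. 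Substituting these into the two convolutions and using the unit laws of the product in $A$, both sides reduce to the single expression $\eta\otimes\omega$, so they agree on generators and hence everywhere by bimodule-linearity.

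The hard part will not be the computation on coinvariants, which is essentially immediate, but the reduction itself: I must argue carefully that $M^{L}\otimes M^{R}$ really generates $M\otimes_{H}M$ as an $H$-bimodule. Passing an element of $H$ across the balanced tensor $\otimes_{H}$ trades a right action on the left leg for a left action on the right leg, and I need to track this carefully so that both legs can be pushed simultaneously into the coinvariants up to the bimodule action; this is exactly where the structure theorem does its work. Should one prefer to avoid the reduction and instead compute directly for arbitrary $m,n\in M$, the obstacle migrates to the Sweedler bookkeeping: one must insert the general bimodule formula for $\sigma_{M,M}$ and apply the coassociative compatibility repeatedly, taking care not to drop an antipode factor or misalign a summation leg. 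I would therefore present the generator-based argument as the main line and keep the direct computation only as a cross-check.
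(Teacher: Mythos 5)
The paper states this lemma bare, with only a citation to \cite[p.~404]{MRosso}; there is no internal proof to compare against, so your proposal supplies the missing argument. Its skeleton --- both sides are morphisms of $H$-bimodules, $M\otimes_{H}M$ is generated as an $H$-bimodule by $M^{L}\otimes M^{R}$, and both sides collapse to $\eta\otimes\omega$ on such generators by coinvariance and the defining property $\sigma\left(\omega\otimes\eta\right)=\eta\otimes\omega$ --- is the standard and correct route (essentially Rosso's own). Two slips should be repaired. First, orientation. As printed the lemma has typos: $\delta_{l}$ should be $\delta_{L}$, and the right-hand side must be $\left(\delta_{R}\cdot\delta_{L}\right)\circ\sigma$, not $\left(\delta_{R}\cdot\delta_{R}\right)\circ\sigma$. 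Your computation on coinvariants silently (and correctly) proves exactly $\delta_{L}\cdot\delta_{R}=\left(\delta_{R}\cdot\delta_{L}\right)\circ\sigma$; make that correction explicit. But your opening identification $f=\delta_{L}$, $g=\delta_{R}$ is backwards: the coproduct $\triangle=\sum_{k}\left(\delta_{R}^{k}\cdot\delta_{L}^{n-k}\right)\circ\tilde{\mathscr{B}}_{k,n-k}$ requires the hypothesis $g\cdot f=\left(f\cdot g\right)\circ\sigma$ with $f=\delta_{R}$ and $g=\delta_{L}$. With your assignment the hypothesis would read $\delta_{R}\cdot\delta_{L}=\left(\delta_{L}\cdot\delta_{R}\right)\circ\sigma$, a genuinely different identity since $\sigma$ is not an involution, and one your generator computation does not prove: $\left(\delta_{R}\cdot\delta_{L}\right)\left(\omega\otimes_{H}\eta\right)=\sum\omega_{\left(0\right)}\eta_{\left(-1\right)}\otimes\omega_{\left(1\right)}\eta_{\left(0\right)}$ does not collapse, because $\delta_{R}$ is not trivial on $M^{L}$ nor $\delta_{L}$ on $M^{R}$.

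Second, the generation step, which you rightly flag as the crux, needs a sharper mechanism than balancedness of $\otimes_{H}$. Moving $h$ across the balanced tensor turns $\omega\cdot h\otimes_{H}\eta$ into $\omega\otimes_{H}h\cdot\eta$, but it never converts these inner actions into the outer ones through which the sub-bimodule generated by $M^{L}\otimes M^{R}$ is formed. What you actually need are the one-sided decompositions $M=H\cdot M^{L}$ and $M=M^{R}\cdot H$, which follow from the structure theorems you quote together with the inherited actions recorded in the paper's definition of coinvariants: $\omega h=\sum h_{\left(1\right)}\left(S\left(h_{\left(2\right)}\right)\omega h_{\left(3\right)}\right)$ with $S\left(h_{\left(1\right)}\right)\omega h_{\left(2\right)}\in M^{L}$, and $h\eta=\sum\left(h_{\left(1\right)}\eta S\left(h_{\left(2\right)}\right)\right)h_{\left(3\right)}$ with $h_{\left(1\right)}\eta S\left(h_{\left(2\right)}\right)\in M^{R}$. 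Then any $m\otimes_{H}n$ lies in the span of elements $h\cdot\left(\omega\otimes_{H}\eta\right)\cdot h'$, and since $\sigma$ and the two convolutions are $H$-bimodule maps (the latter because the outer actions commute with the multiplication of the target algebra), agreement on $M^{L}\otimes M^{R}$ concludes the argument. With these two repairs your proof is complete.
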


\begin{prop}[{Coproduct on $T\left(M\right)$ \cite[p. 404]{MRosso}}]
\label{prop:Coproduct}

The coproduct on $T_{H}\left(M\right)$ is given by: for $\left(x_{1},\cdots,x_{n}\right)\in M^{\otimes_{H}n}$,
\[
\triangle\left(x_{1},\cdots,x_{n}\right)=\sum_{k=0}^{n}\left(\delta_{R}^{k}\cdot\delta_{L}^{n-k}\right)\circ\tilde{\mathscr{B}}_{k,n-k}
\]
\end{prop}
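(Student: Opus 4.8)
The plan is to use that the coproduct $\triangle$ on $T_H(M)$ is a \emph{graded} algebra homomorphism for the tensor product, so that its value on a degree-$n$ element $(x_1,\cdots,x_n)=x_1\cdots x_n\in M^{\otimes_H n}$ is the product $\triangle(x_1)\cdots\triangle(x_n)$ computed in $A:=T_H(M)\otimes T_H(M)$. First I would pin down $\triangle$ in degree one. Writing $H=M^{\otimes_H 0}$ for the degree-zero component, since $\triangle$ preserves the total degree and $M$ sits in degree $1$, the element $\triangle(x)$ for $x\in M$ must lie in the degree-one part of $A$, namely $(H\otimes M)\oplus(M\otimes H)$; the $H$-bicomodule structure identifies these two components with $\delta_L(x)$ and $\delta_R(x)$, so that $\triangle|_{M}=\delta_R+\delta_L$ (there is no $H\otimes H$ term, as that would be degree zero). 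Consequently $\triangle(x_1\cdots x_n)=(\delta_R+\delta_L)^n(x_1\otimes\cdots\otimes x_n)$, and the whole statement reduces to evaluating this $n$-th power.

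At this point I would invoke the Proposition on the power of the sum of two bimodule maps with $f=\delta_R$ and $g=\delta_L$, both regarded as $H$-bimodule maps $M\to A$ into the $H$-bimodule algebra $A=T_H(M)\otimes T_H(M)$. Its conclusion gives directly
\[
(\delta_R+\delta_L)^n=\sum_{k=0}^{n}\left(\delta_R^{k}\cdot\delta_L^{n-k}\right)\circ\tilde{\mathscr{B}}_{k,n-k},
\]
which is exactly the asserted formula once $\triangle=(\delta_R+\delta_L)^n$ in degree $n$ is granted. The single hypothesis needed by that Proposition is the commutation relation $g\cdot f=(f\cdot g)\circ\sigma$, i.e.\ $\delta_L\cdot\delta_R=(\delta_R\cdot\delta_L)\circ\sigma$; this is precisely the content of the preceding Lemma relating the coactions to the braiding $\sigma$, so no new computation is required here and the combinatorial bookkeeping with the $\tilde{\mathscr{B}}_{k,n-k}$ is already absorbed into the quoted Proposition.

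The main obstacle, and the part I would treat most carefully, is the reduction step rather than the combinatorics. One must justify that $\triangle$ is genuinely multiplicative for the tensor product (this is part of the cotensor Hopf algebra structure, transported along the isomorphism $T_H^c(M)\cong T_H(M)$), and that $A=T_H(M)\otimes T_H(M)$ is an $H$-bimodule algebra into which $\delta_R$ and $\delta_L$ are $H$-bimodule maps, with the grading identifications $H\otimes M=T^0\otimes T^1$ and $M\otimes H=T^1\otimes T^0$ matching the degree-one analysis above. Once these structural points and the degree-one identity $\triangle|_{M}=\delta_R+\delta_L$ are secured, the power-of-the-sum Proposition and the coaction–braiding Lemma together yield the formula immediately.
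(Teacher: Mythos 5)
Your proposal is correct and is exactly the argument the paper intends: the paper gives no proof of its own for this Proposition (it simply cites Rosso), but it places the power-of-the-sum proposition and the coaction--braiding lemma immediately before it precisely so that the formula follows by taking $f=\delta_{R}$, $g=\delta_{L}$ in $\left(f+g\right)^{n}=\sum_{k=0}^{n}\left(f^{k}\cdot g^{n-k}\right)\circ\tilde{\mathscr{B}}_{k,n-k}$, which is what you do. The one step you flag as the main obstacle, namely $\triangle|_{M}=\delta_{R}+\delta_{L}$ together with multiplicativity of $\triangle$, is in Rosso's construction essentially definitional (the coproduct on $T_{H}\left(M\right)$ is the unique algebra map induced by $\Delta_{H}$ and the coactions landing in $H\otimes M+M\otimes H$, dual to the paper's ``universal property'' definition for $T_{H}^{c}\left(M\right)$), so your reduction is secure and you have even correctly repaired the paper's typo in the lemma, which should read $\delta_{L}\cdot\delta_{R}=\left(\delta_{R}\cdot\delta_{L}\right)\circ\sigma$.
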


\begin{defn}[{Universal property of $T_{H}^{c}\left(M\right)$ \cite[p. 405]{MRosso}}]

The product on $T_{H}^{c}\left(M\right)$ is a unique coalgebra map
\[
T_{H}^{c}\left(M\right)\otimes T_{H}^{c}\left(M\right)\rightarrow T_{H}^{c}\left(M\right)
\]
induced by the product on $H$ and by left or right coaction on $H\otimes M+M\otimes H$.
\end{defn}

We denote $V=M^{R}$ for simplicity.
\begin{prop}[{Shuffle multiplication and structure of $T\left(V\right)$ \cite[p. 405]{MRosso}}]
\textcolor{white}{\scriptsize{}T}

\begin{enumerate}
\item There is an associative algebra structure on $T\left(V\right)$, given
by: for $x_{1},\cdots,x_{n}$ in $V$,
\[
\left(x_{1}\otimes\cdots\otimes x_{p}\right)\cdot\left(x_{p+1}\otimes\cdots\otimes x_{n}\right)=\sum_{w\in\sum_{p,n-p}}T_{w}\left(x_{1}\otimes\cdots\otimes x_{n}\right)
\]
We call this ``shuffle multiplication'' or, more formally, ``quantum
shuffle multiplication''. In other publications, it is also called
``quantum shuffle product'' and ``braided shuffle product'' \cite[p. 107]{takeuchi}.
\item The diagonal coaction on $H$ on each $V^{\otimes n}$ gives $T\left(V\right)$
an $H$-comodule structure $\delta_{L}:T\left(V\right)\rightarrow H\otimes T\left(V\right)$,
and $\delta_{L}$ is an algebra homomorphism.
\item For the diagonal action of $H$ on each $V^{\otimes n}$, $T\left(V\right)$
is an $H$-module algebra, and $T\left(V\right)\otimes H$ inherits
the crossed product algebra structure.
\item The following defines a coalgebra structure on $T\left(V\right)\otimes H$:
for $\left(v_{1},\cdots,v_{n}\right)$ in $V$ and $h$ in $H$,
\begin{multline*}
\triangle\left(\left(v_{1},\cdots,v_{n}\right)\otimes h\right)=\\
=\sum_{k=0}^{n}\left[\left(v_{1},\cdots,v_{k}\right)\otimes v_{k+1\left(-1\right)}\cdots v_{n\left(-1\right)}h_{\left(1\right)}\right]\\
\otimes\left[\left(v_{k+1\left(0\right)},\cdots,v_{n\left(0\right)}\right)\otimes h_{\left(2\right)}\right]
\end{multline*}
\item The algebra structure of 3 and coalgebra structure of 4 are compatible
and make $T\left(V\right)\otimes H$ a Hopf algebra.
\end{enumerate}
\end{prop}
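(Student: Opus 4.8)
The plan is to deduce all five claims from the universal property of the cotensor Hopf algebra $T_H^c(M)$ together with the structure theorem for Hopf bimodules, which (via the equivalence $M \mapsto M^R$ of braided tensor categories recorded above) identifies $M$ with $V \otimes H$, where $V = M^R$. The organizing device is a single isomorphism of graded coalgebras $\Phi : T_H^c(M) \to T(V) \otimes H$, obtained from $M^{\sqcup n} \cong V^{\otimes n} \otimes H$; every structure on the cotensor side is then transported to the explicit formulas on the right. This already settles part 4: under $\Phi$ the coproduct of Proposition \ref{prop:Coproduct} becomes the displayed comultiplication $\triangle$, so its coassociativity and counit axioms are inherited and require no separate check once one verifies that $\Phi$ intertwines the coactions $\delta_L$ and $\delta_R$.

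For part 1 I would transport the product on $T_H^c(M)$ -- the unique coalgebra map furnished by the universal property -- through $\Phi$ and restrict to $T(V) \otimes 1 \cong T(V)$. Evaluating on $(x_1 \otimes \cdots \otimes x_p) \cdot (x_{p+1} \otimes \cdots \otimes x_n)$, compatibility with the coproduct forces the two factors to be interleaved in every order-preserving way; on $V^{\otimes n}$ each interleaving is realized by moving the entries of the second factor leftward past those of the first, and each elementary move is an application of the braiding $\sigma$. Summing over the $(p,n-p)$-shuffles and recording the accumulated braidings of a shuffle $w$ as its Matsumoto lift $T_w = \sigma_{i_1} \cdots \sigma_{i_{l(w)}}$ produces exactly $\sum_{w \in \sum_{p,n-p}} T_w$. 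Associativity then costs nothing extra: both $m \circ (m \otimes \mathrm{Id})$ and $m \circ (\mathrm{Id} \otimes m)$ are coalgebra maps $T_H^c(M)^{\otimes 3} \to T_H^c(M)$ agreeing with the data that characterize the product uniquely, hence they coincide.

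Parts 2, 3, and 5 are compatibilities that I would read off the shuffle formula. For part 2 the diagonal coaction $\delta_L(v_1 \otimes \cdots \otimes v_n) = v_{1(-1)} \cdots v_{n(-1)} \otimes (v_{1(0)} \otimes \cdots \otimes v_{n(0)})$ is multiplicative because $\sigma$ is a morphism of bicomodules: every braiding appearing in a shuffle commutes with $\delta_L$, which is precisely the entrywise content of the relation $\delta_L \cdot \delta_R = (\delta_R \cdot \delta_R) \circ \sigma$ recorded above, so the coaction of a shuffled product factors as the product of the coactions. Dually, for part 3 the diagonal action $h \cdot (v_1 \otimes \cdots \otimes v_n) = h_{(1)} v_1 \otimes \cdots \otimes h_{(n)} v_n$ satisfies $h \cdot (a \cdot b) = \sum (h_{(1)} \cdot a)(h_{(2)} \cdot b)$, making $T(V)$ an $H$-module algebra and $T(V) \otimes H$ the associated crossed (smash) product. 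Part 5 is then assembled: since the algebra of part 3 and the coalgebra of part 4 both descend from the bialgebra structure of $T_H^c(M)$ under $\Phi$, the map $\triangle$ is automatically an algebra homomorphism, a bialgebra results, and the antipode is the standard recursively defined one available on a connected graded bialgebra.

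I expect the main obstacle to be part 1, specifically the claim that the product obtained formally from the universal property is literally the shuffle sum carrying the full braid-group lifts $T_w$ rather than bare symmetric-group permutations. The quantum subtlety is that interchanging two blocks is not the flip but the braiding $\sigma$, so the bookkeeping that converts a reduced word for a shuffle $w$ into the operator $T_w$ -- together with the check that this operator is independent of the chosen reduced expression (well-definedness of the Matsumoto section, guaranteed by the braid equation and the representation $T$ of $B_n$ already established) -- is the delicate heart of the argument. Once the product is pinned down in this form, parts 2--5 follow the same template and, though laborious, present no further conceptual resistance.
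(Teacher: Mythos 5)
Your architecture is sound and, in outline, is the same as the route in Rosso that the paper itself relies on: the paper states this proposition without proof, citing \cite[p. 405]{MRosso}, and the propositions it records immediately beforehand (the structure of $T_{H}\left(M\right)$, the binomial formula for $\left(f+g\right)^{n}$, the lemma $\delta_{L}\cdot\delta_{R}=\left(\delta_{R}\cdot\delta_{R}\right)\circ\sigma$, and the coproduct formula of Proposition \ref{prop:Coproduct}) are precisely the scaffolding for the source's proof. But your execution of part 1 has a genuine hole exactly where you flag ``the delicate heart'': the universal property determines the product only through its corestrictions to $H\oplus M$, and ``compatibility with the coproduct forces the two factors to be interleaved in every order-preserving way'' is a heuristic, not a derivation. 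The machinery that actually converts the corestriction data into the sum $\sum_{w\in\sum_{p,n-p}}T_{w}$ is the recorded formula $\left(f+g\right)^{n}=\sum_{k=0}^{n}\left(f^{k}\cdot g^{n-k}\right)\circ\tilde{\mathscr{B}}_{k,n-k}$, whose hypothesis $g\cdot f=\left(f\cdot g\right)\circ\sigma$ is supplied by the lemma $\delta_{L}\cdot\delta_{R}=\left(\delta_{R}\cdot\delta_{R}\right)\circ\sigma$; you instead invoke that lemma for part 2, where it is not the relevant fact (there you only need that $\sigma$ is a morphism of bicomodules), and omit it from part 1, where it is indispensable. Your associativity argument via uniqueness of coalgebra maps is fine.

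Two further defects. First, your antipode argument for part 5 fails as stated: $T\left(V\right)\otimes H$ is \emph{not} a connected graded bialgebra, since its degree-zero component is $H$ rather than $k$, so the standard recursive antipode you appeal to does not exist in this generality. You need either the stronger fact that a graded bialgebra whose degree-zero part is a Hopf algebra admits an antipode, or transport of the antipode of $T_{H}^{c}\left(M\right)$. Second, that transport strategy exposes a gluing gap: you prove part 3 intrinsically (the smash product), but prove part 5 by transporting the bialgebra structure of $T_{H}^{c}\left(M\right)$ along $\Phi$; these combine only if the transported product coincides with the crossed product of part 3, which requires the straightening identity $h\,\phi\left(v\right)=\sum\phi\left(h_{\left(1\right)}\cdot v\right)h_{\left(2\right)}$ in $T_{H}^{c}\left(M\right)$ --- a computation you never perform, and which is essentially the content of Theorem \ref{thm:TheMapQSA}, a statement that in the paper's (and Rosso's) ordering comes \emph{after} this proposition. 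Either prove that identity here or restructure part 5 to verify the bialgebra compatibility directly on the explicit formulas, as the source does.
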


\begin{prop}[{Embedding of $T\left(V\right)$ in $T_{H}^{c}\left(M\right)$ \cite[p. 406]{MRosso}}]
\textcolor{white}{\scriptsize{}T}

\begin{enumerate}
\item There is a natural embedding of $\phi$ of $T\left(V\right)$ in $T_{H}^{c}\left(M\right)$
given on homogenous elements of degree $n$ by:
\begin{multline*}
\phi\left(v^{1},\cdots,v^{n}\right)=\\
=\sum v^{1}v_{\left(-1\right)}^{2}\cdots v_{\left(-n+1\right)}^{n}\otimes v_{\left(0\right)}^{2}v_{\left(-1\right)}^{3}\cdots v_{\left(-n+2\right)}^{n}\otimes\cdots\otimes v_{\left(0\right)}^{n}
\end{multline*}
whose image is the subspace of right coinvariants.
\item There is an isomorphism of right module and comodule $\tilde{\phi}:T\left(V\right)\otimes H\rightarrow T_{H}^{c}\left(M\right)$
\begin{multline*}
\tilde{\phi}\left[\left(v^{1},\cdots,v^{n}\right)\otimes h\right]=\\
=\sum v^{1}v_{\left(-1\right)}^{2}\cdots v_{\left(-n+1\right)}^{n}h_{\left(1\right)}\\
\otimes v_{\left(0\right)}^{2}v_{\left(-1\right)}^{3}\cdots v_{\left(-n+2\right)}^{n}h_{\left(2\right)}\otimes\cdots\otimes v_{\left(0\right)}^{n}h_{\left(n\right)}
\end{multline*}
\item The subspace of right coinvariants is a subalgebra of $T_{H}^{c}\left(M\right)$.
\end{enumerate}
\end{prop}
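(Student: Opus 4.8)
The plan is to recognize that the three assertions are facets of a single structural fact: the cotensor coalgebra $T_{H}^{c}\left(M\right)$ is isomorphic, as a right $H$-module and $H$-comodule, to $T\left(V\right)\otimes H$, with the right coinvariants corresponding to $T\left(V\right)\otimes1$. Setting $h=1$ in the stated formula for $\tilde{\phi}$ recovers exactly the formula for $\phi$, since $\Delta^{\left(n-1\right)}\left(1\right)=1\otimes\cdots\otimes1$ makes every factor $h_{\left(i\right)}$ equal to $1$; hence once $\tilde{\phi}$ is known to be an isomorphism, part (1) follows by restricting to $T\left(V\right)\otimes1$ and part (3) follows by a short argument about $\delta_{R}$. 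So I would first build $\tilde{\phi}$, then read off $\phi$ and the subalgebra statement.

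The backbone is the fundamental theorem of Hopf modules. Viewing $M$ as a right Hopf module over $H$ (the right action together with the right coaction $\delta_{R}$), its coinvariants are precisely $V=M^{R}$, and the action map $V\otimes H\rightarrow M$, $v\otimes h\mapsto v\cdot h$, is an isomorphism of right modules and right comodules whose inverse is built from $\delta_{R}$ and the antipode. I would first propagate this through the cotensor construction, proving by induction on $n$ that $M^{\sqcup n}\cong V^{\otimes n}\otimes H$. For $n=2$, the cotensor product $M\sqcup M$ is the kernel of $\delta_{R}\otimes I-I\otimes\delta_{L}$; transporting along $M\cong V\otimes H$ and using that $V$ is right-coinvariant (so $\delta_{R}$ on the first factor sees only the $H$ factor through $\Delta$) while $\delta_{L}$ on the second factor is the crossed-module coaction, the kernel condition collapses the middle copy of $H$ and identifies $M\sqcup M$ with $V\otimes V\otimes H$. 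Iterating yields $M^{\sqcup n}\cong V^{\otimes n}\otimes H$, and therefore $T_{H}^{c}\left(M\right)=H\oplus\bigoplus_{n\ge1}M^{\sqcup n}\cong T\left(V\right)\otimes H$.

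The second step is to check that this abstract isomorphism is, written out in Sweedler notation, precisely the map $\tilde{\phi}$. Tracking the inductive identification, each $v^{j}$ gets unfolded by the iterated left coaction $\delta_{L}^{\left(j-1\right)}$, its Hopf-algebra components $v_{\left(-j+1\right)}^{j},\ldots,v_{\left(-1\right)}^{j}$ are multiplied into the earlier slots by the right $H$-action on $M$, the component $v_{\left(0\right)}^{j}$ stays in slot $j$, and the coproduct components $h_{\left(1\right)},\ldots,h_{\left(n\right)}$ of $h$ are appended on the right in each slot. Verifying that the resulting tensor genuinely satisfies the cotensor relation at every junction $\left(i,i+1\right)$ is where I expect the real work: it reduces to showing that applying $\delta_{R}$ to slot $i$ agrees with applying $\delta_{L}$ to slot $i+1$ in $M\otimes H\otimes M$, which in turn unwinds to the Hopf-bimodule axiom $\left(\delta_{L}\otimes Id\right)\delta_{R}=\left(Id\otimes\delta_{R}\right)\delta_{L}$ together with coassociativity and the module-coaction compatibility. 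This bookkeeping with overlapping iterated coactions is the main obstacle; the rest is formal.

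Finally, since $\tilde{\phi}$ is an isomorphism of right comodules and the right coaction on $T\left(V\right)\otimes H$ is carried by the $H$ factor, its coinvariants are $T\left(V\right)\otimes1$ (as $\Delta\left(h\right)=h\otimes1$ forces $h\in k\cdot1$); their image is exactly the right-coinvariant subspace of $T_{H}^{c}\left(M\right)$, and comparison with the $h=1$ specialization identifies this image with $\phi\left(T\left(V\right)\right)$ while giving injectivity of $\phi$ for free. This proves part (1). For part (3), I would use that the right coaction $\delta_{R}$ on $T_{H}^{c}\left(M\right)$ is an algebra homomorphism for the product furnished by the universal property (equivalently, invoke the already-recorded fact that $\sigma$ maps $M^{R}\otimes M^{R}$ into itself, so the braided shuffle product preserves coinvariants); then for coinvariant $x,y$ one computes $\delta_{R}\left(xy\right)=\delta_{R}\left(x\right)\delta_{R}\left(y\right)=\left(x\otimes1\right)\left(y\otimes1\right)=xy\otimes1$, so the coinvariants are closed under multiplication and form a subalgebra, establishing part (3).
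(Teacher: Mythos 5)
The paper offers no proof of this proposition at all --- it is reproduced from Rosso's paper (cited as p.\ 406), with the isomorphism statement deferred to Theorem \ref{thm:TheMapQSA} --- so your proposal can only be measured against the cited source, and in substance it reconstructs exactly that argument. Your backbone is correct: the fundamental theorem of Hopf modules gives $M\cong V\otimes H$ as a right Hopf module; the collapsing isomorphism $H\sqcup N\cong N$ then yields $M^{\sqcup n}\cong V^{\otimes n}\otimes H$ by induction, whence $T_{H}^{c}\left(M\right)\cong T\left(V\right)\otimes H$; and writing the composite in Sweedler notation produces exactly the displayed $\tilde{\phi}$, with (1) obtained by restricting to $T\left(V\right)\otimes1$ since the coinvariants of the coaction carried by the $H$ factor are $k\cdot1$. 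The junction verification you flag as the main obstacle does go through along the lines you indicate: at the $\left(i,i+1\right)$ junction one needs only that $\delta_{R}$ is a bimodule morphism, that $\delta_{R}\left(v\right)=v\otimes1$ for $v\in V$, that $V$ is a left subcomodule of $M$ (this is where the commuting-coactions axiom enters), and coassociativity; e.g.\ for $n=2$ both $\delta_{R}$ on the first slot and $\delta_{L}$ on the second reduce to $\sum v^{1}v_{\left(-2\right)}^{2}h_{\left(1\right)}\otimes v_{\left(-1\right)}^{2}h_{\left(2\right)}\otimes v_{\left(0\right)}^{2}h_{\left(3\right)}$.

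Two small points of care. First, in your argument for (3), the computation $\delta_{R}\left(xy\right)=\delta_{R}\left(x\right)\delta_{R}\left(y\right)=\left(x\otimes1\right)\left(y\otimes1\right)=xy\otimes1$ is the right one, but multiplicativity of $\delta_{R}=\left(\mathrm{Id}\otimes\pi\right)\circ\triangle$ requires the observation that the degree-zero projection $\pi:T_{H}^{c}\left(M\right)\rightarrow H$ is an algebra map; this is available because the product is, by the universal property quoted in the paper, a coalgebra morphism induced from the product of $H$, so products of positive-degree components have vanishing degree-zero part. Second, your parenthetical alternative --- invoking that $\sigma$ preserves $M^{R}\otimes M^{R}$ so that ``the braided shuffle product preserves coinvariants'' --- is circular at this stage: that the algebra structure induced on the coinvariants is the braided shuffle product is a consequence of (1)--(3) together with the preceding structure propositions, not an input to them. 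Drop that parenthetical and your proof stands as a faithful filling-in of the argument the paper leaves to the citation.
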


\begin{thm}
\label{thm:TheMapQSA}The map $\tilde{\phi}$ is a Hopf algebra isomorphism.
\cite[p. 405]{MRosso}
\end{thm}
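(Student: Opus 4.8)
The plan is to lean on the immediately preceding proposition, which already supplies the hard structural input: $\tilde{\phi}$ is a bijection, indeed an isomorphism of right $H$-modules and $H$-comodules. Since any bialgebra homomorphism between Hopf algebras automatically intertwines the antipodes (the antipode being the unique convolution inverse of the identity, hence determined by the bialgebra data alone), it suffices to promote $\tilde{\phi}$ from a module-comodule isomorphism to a \emph{bialgebra} isomorphism. The work therefore splits into two verifications: that $\tilde{\phi}$ is a coalgebra homomorphism and that it is an algebra homomorphism. Bijectivity being given, these two together yield the theorem.

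First I would establish the coalgebra property, namely $\triangle \circ \tilde{\phi} = (\tilde{\phi}\otimes\tilde{\phi})\circ\triangle$ together with $\epsilon\circ\tilde{\phi} = \epsilon$. Concretely I would feed the explicit coproduct on $T(V)\otimes H$ from point~4 of the shuffle-multiplication proposition into the left-hand side, feed the coproduct on $T_{H}^{c}(M)$ from Proposition \ref{prop:Coproduct} into the right-hand side, substitute the defining formula for $\tilde{\phi}$, and match the two expressions term by term. Both coproducts are sums over a split index $k$ of a ``cut after position $k$'' term decorated by iterated coactions and weighted by the braiding sum $\tilde{\mathscr{B}}_{k,n-k}$, so the task is to check that the nested coaction factors $v^{i}_{(-j)}$ built into $\tilde{\phi}$ redistribute correctly across the cut. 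This is where the bicomodule axiom $(\delta_{L}\otimes\mathrm{Id})\delta_{R}=(\mathrm{Id}\otimes\delta_{R})\delta_{L}$ and the preceding lemma's relation $\delta_{L}\cdot\delta_{R}=(\delta_{R}\cdot\delta_{R})\circ\sigma$ are used to realign the two sides.

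For the algebra property I would avoid a direct crossed-product computation and instead invoke the universal characterization of the product on $T_{H}^{c}(M)$: by definition it is the \emph{unique} coalgebra map $T_{H}^{c}(M)\otimes T_{H}^{c}(M)\to T_{H}^{c}(M)$ induced by the product of $H$ and the coactions on $H\otimes M+M\otimes H$. Having the coalgebra isomorphism $\tilde{\phi}$ in hand from the previous paragraph, I would transport the crossed-product algebra structure of $T(V)\otimes H$ (point~3 of the shuffle-multiplication proposition) across $\tilde{\phi}$; the transported product is again a coalgebra map, since conjugating a coalgebra map by a coalgebra isomorphism preserves that property. A short check on generators — that this transported product agrees with the product of $H$ in degree zero and with the prescribed coactions in degree one — then lets uniqueness force it to equal the universal product, which is exactly the assertion that $\tilde{\phi}$ is multiplicative.

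The hard part will be the coalgebra verification. Tracking the superscript shifts in the factors $v^{i}_{(-j)}$ appearing in $\tilde{\phi}(v^{1},\ldots,v^{n})$ as comultiplication cuts the word at each $k$, and reconciling this with the identical-looking but differently-indexed sum over $\tilde{\mathscr{B}}_{k,n-k}$ in Proposition \ref{prop:Coproduct}, is the genuine content; the bicomodule compatibility and the coaction-braiding relation must be applied at precisely the right places to make the two sums coincide, and it is easy to misalign the iterated coactions if the indexing is not handled with care.
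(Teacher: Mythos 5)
The first thing to note is that the paper contains no proof of this theorem at all: it is stated as an imported result, with the citation to Rosso (p.~405) standing in for the argument. So there is no in-paper proof to compare your write-up against line by line; your proposal has to be judged against the cited source's strategy, and in outline it reconstructs that strategy faithfully. You take bijectivity and the module/comodule compatibility from the preceding proposition, reduce the antipode to the standard fact that a bialgebra morphism between Hopf algebras automatically intertwines antipodes, verify comultiplicativity directly, and obtain multiplicativity from the universal characterization of the product on $T_{H}^{c}\left(M\right)$ as the \emph{unique} coalgebra map induced by the product of $H$ and the structure maps on $H\otimes M+M\otimes H$, checked on the degree-zero and degree-one corestrictions. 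That is the right architecture, and it is essentially how the result is proved in the cited source.

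There is, however, one concrete misstep in the coalgebra half. You propose to feed ``the coproduct on $T_{H}^{c}\left(M\right)$ from Proposition \ref{prop:Coproduct}'' into one side of the check, and you describe both sides as sums over a cut index $k$ \emph{weighted by} $\tilde{\mathscr{B}}_{k,n-k}$. But Proposition \ref{prop:Coproduct} gives the coproduct on the tensor algebra $T_{H}\left(M\right)$ — it arises as $\left(\delta_{L}+\delta_{R}\right)^{n}$ via the binomial-type formula for $\left(f+g\right)^{n}$, which is where the lemma $\delta_{L}\cdot\delta_{R}=\left(\delta_{R}\cdot\delta_{R}\right)\circ\sigma$ is actually consumed — not the coproduct on the cotensor coalgebra $T_{H}^{c}\left(M\right)=H\oplus\oplus_{n\ge1}M^{\sqcup n}$, whose comultiplication is the deconcatenation (``cut'') coproduct together with the bicomodule structure maps and carries no braiding sum. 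Correspondingly, the coproduct on $T\left(V\right)\otimes H$ in point 4 of the shuffle-structure proposition has no $\tilde{\mathscr{B}}_{k,n-k}$ factor either. So the ``hard part'' you isolate — reconciling two differently indexed $\tilde{\mathscr{B}}_{k,n-k}$-weighted sums — consists of matching weights that are not there; the genuine computation is matching the nested coaction factors $v_{\left(-j\right)}^{i}$ in the formula for $\tilde{\phi}$ against the cut coproduct of $T_{H}^{c}\left(M\right)$, using coassociativity of $\delta_{L}$, $\delta_{R}$ and the bicomodule axiom $\left(\delta_{L}\otimes Id\right)\delta_{R}=\left(Id\otimes\delta_{R}\right)\delta_{L}$. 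With that substitution your plan goes through. One further point to make explicit in the multiplicativity step: invoking uniqueness from the universal property requires verifying that the transported product satisfies the hypotheses under which coalgebra maps into a cotensor coalgebra are determined by their corestriction (the appropriate conilpotency condition), which you currently assume silently.
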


Below, we will show bases construction in terms of Lyndon words for
$T\left(V\right)$.

\subsection{Quantum symmetric algebra}

The term ``quantum symmetric algebra'' was introduced by Rosso \cite{MRosso}
and refers in part to quantization $\sigma\left(v\otimes w\right)=q_{vw}w\otimes v$
of the action $\sigma\left(v\otimes w\right)=w\otimes v$ of a symmetric
group on a tensor space.
\begin{defn}[{Quantum symmetric algebra \cite[p. 407]{MRosso}}]

The sub-Hopf algebra $S_{H}\left(M\right)$ of $T_{H}^{c}\left(M\right)$
generated by $M$ and $H$ is a Hopf bimobule, and its subspace of
right coinvariants is isomorphic, via $\phi$, to the subalgebra of
$T\left(V\right)$ generated by $V$. We will call this a ``quantum
symmetric algebra'' and denote it by $S_{\sigma}\left(V\right)$.
\end{defn}

Quantum symmetric algebra is also known as ``Nichols algebra'' by
N. Andruskiewitsch and H.-J. Schneider \cite{anschneider} and ``bialgebras
of type one'' by W. Nichols \cite{nichols}.

In general, braided shuffle algebras naturally satisfy the quantum
Serre relations \cite[p. 108]{takeuchi}.

The braiding on $S_{\sigma}\left(V\right)$ is induced by the diagonal
braiding in $V\times V$ given by $\sigma\left(e_{i}\otimes e_{j}\right)=q_{ij}\left(e_{j}\otimes e_{i}\right)$
and is encoded in the $N\times N$ matrix $\left(q_{ij}\right)$.

Since the braiding on $S_{\sigma}\left(V\right)$ is diagonal, we
can extend the method of construction of bases in terms of Lyndon
words for classic shuffle algebras to $S_{\sigma}\left(V\right)$
almost verbatim.

\subsection{Universal construction in braid category}

Assume that we have used the braid group generator $\sigma$ to construct
a representation of the braid group $B_{n}$ in $V^{\otimes n}$.
If we define a graded multiplication $sh$ as in Proposition \ref{prop:AssocAlgStuc},
or, more specifically, a shuffle multiplication, on the tensor space
$T\left(V\right)$ generated by $V$, we may wish to ensure that algebra
and coalgebra structures on $T\left(V\right)$ are compatible. To
that end, we can (1) use the coproduct definition from Proposition
\ref{prop:Coproduct} with trivial coactions, and (2) change the product
in $T\left(V\right)\otimes T\left(V\right)$.
\begin{prop}[{Associative algebra structure on $T\left(V\right)\otimes T\left(V\right)$
\cite[p. 407-408]{MRosso}}]
\label{prop:AssocAlgStuc}

Let $\left(V,\sigma\right)$ be a braided space.
\begin{enumerate}
\item The following defines an associative algebra structure on $T\left(V\right)\otimes T\left(V\right)$:
for $p$ and $q$ two positive integers, let $w_{p,q}$ be the permutation:
\[
\left(\begin{array}{ccccccc}
1 & 2 & \cdots & q & q+1 & \cdots & p+q\\
p+1 & p+2 & \cdots & p+q & 1 & \cdots & p
\end{array}\right)
\]
and let $T_{w_{p,q}}$ be the associative element in the braid group
$B_{p+q}$: it acts in $V^{\otimes p+q}$ and can be seen as a ``generalized
flip'' from $V^{\otimes p}\otimes V^{\otimes q}$ to $V^{\otimes q}\otimes V^{\otimes p}$.
Then the product sends $\left(V^{\otimes n}\otimes V^{\otimes p}\right)\otimes\left(V^{\otimes q}\otimes V^{\otimes m}\right)$
to $V^{\otimes n+q}\otimes V^{\otimes p+m}$ and it is the composition:
$\left(sh\otimes sh\right)\circ\left(Id\otimes T_{w_{p,q}}\otimes Id\right)$
where $sh$ denotes the product on $T\left(V\right)$ defined above.
\item Then $\Delta:T\left(V\right)\rightarrow T\left(V\right)\otimes T\left(V\right)$
is an algebra homomorphism.
\end{enumerate}
\end{prop}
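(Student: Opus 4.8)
The plan is to read the whole statement as the assertion that $\left(T\left(V\right),sh,\Delta\right)$ is a \emph{braided} bialgebra: the braiding induced on the tensor powers of $V$ by $\sigma$ is exactly what makes the generalized flip $T_{w_{p,q}}$ the canonical braiding $V^{\otimes p}\otimes V^{\otimes q}\to V^{\otimes q}\otimes V^{\otimes p}$, item (1) becomes the statement that the braided tensor square of an algebra object is again an algebra, and item (2) becomes the braided bialgebra compatibility. Since $sh$, $\Delta$, and $T_{w_{p,q}}$ are all graded, it suffices to verify both claims on homogeneous pure tensors, so I would fix $\xi\in V^{\otimes n}\otimes V^{\otimes p}$, $\eta\in V^{\otimes q}\otimes V^{\otimes m}$, and so on, and compute componentwise.

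For item (1) I would prove associativity of $\mu=\left(sh\otimes sh\right)\circ\left(Id\otimes T_{w_{p,q}}\otimes Id\right)$ directly. Expanding $\left(\xi\cdot\eta\right)\cdot\zeta$ and $\xi\cdot\left(\eta\cdot\zeta\right)$, each becomes a composite of $sh$'s and generalized flips, and three ingredients reconcile them: (i) associativity of $sh$, already granted by the shuffle-product proposition; (ii) the hexagon-type factorization $T_{w_{p+p',q}}=\left(T_{w_{p,q}}\otimes Id\right)\circ\left(Id\otimes T_{w_{p',q}}\right)$ and its transpose, which follow from the multiplicativity $T_{uv}=T_{u}T_{v}$ of the Matsumoto section along length-additive products in $\sum_{n}$; and (iii) naturality of the braiding with respect to $sh$, i.e.\ that a flip may be slid through a shuffle performed in a disjoint block. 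Each of (ii) and (iii) rests on the braid equation for $\sigma$ and on Matsumoto's theorem, which is precisely what makes $T_{w}$ independent of the chosen reduced word. Equivalently, one may cite the general categorical fact that $\left(m\otimes m\right)\circ\left(Id\otimes c\otimes Id\right)$ is associative for any algebra $(A,m)$ in a braided category with braiding $c$, specialized to $A=T\left(V\right)$, $m=sh$, $c=T_{w_{p,q}}$.

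For item (2) the target identity is $\Delta\circ sh=\mu\circ\left(\Delta\otimes\Delta\right)$. Substituting the trivial coactions into Proposition \ref{prop:Coproduct}, $\Delta$ reduces to a braided deconcatenation: a sum over cuts $0\le k\le n$ of the argument, each preceded by the reordering $\tilde{\mathscr{B}}_{k,n-k}$. I would then unfold both sides on a homogeneous tensor. On the left one first shuffles the two factors and then cuts the result at every position; on the right one first cuts each factor, transports the two inner blocks past one another through $T_{w_{p,q}}$, and then shuffles the matching halves. The equality is a bijection between the two index sets -- \emph{shuffle, then cut} versus \emph{cut each, then shuffle} -- under which the crossings recorded on the left by the shuffle lifts $T_{w}$ and by $\tilde{\mathscr{B}}_{k,n-k}$ are reproduced, weight for weight, by the single generalized flip $T_{w_{p,q}}$ on the right.

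The main obstacle will be the bookkeeping of Matsumoto lifts. Because $T_{w}$ is defined through a reduced expression, every rewriting of a product of generalized flips must be justified by length-additivity in $\sum_{n}$ together with Matsumoto's theorem, rather than taken for granted as in the purely symmetric $q\to1$ case where the lifts collapse to honest permutations. The genuinely delicate point is the bijection in item (2): showing that the crossings produced by cutting-then-shuffling match, braiding weight for braiding weight, those produced by shuffling-then-cutting once the middle block has been transported by $T_{w_{p,q}}$. Once that combinatorial matching is in place -- and once (i)--(iii) are verified for item (1) -- the remaining steps are mechanical.
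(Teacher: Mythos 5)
Your item~(1) plan is sound, and it is worth noting that the paper itself offers no proof of this proposition at all --- it is quoted from Rosso \cite[p. 407-408]{MRosso} without argument --- so your proposal can only be measured against the standard argument the citation encodes. For~(1), your three ingredients are exactly the right ones: associativity of $sh$, the length-additive factorization of the block flip (e.g.\ $\ell\left(w_{p+p',q}\right)=\left(p+p'\right)q=pq+p'q$, so Matsumoto multiplicativity gives $T_{w_{p+p',q}}=\left(T_{w_{p,q}}\otimes Id\right)\circ\left(Id\otimes T_{w_{p',q}}\right)$), and naturality of the block braid past braids supported in a single block; this is the braided-category ``tensor square of an algebra'' argument and it goes through.

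The genuine gap is in item~(2): your identification of $\Delta$. You read Proposition \ref{prop:Coproduct} with trivial coactions as a ``braided deconcatenation,'' cutting at every $k$ \emph{after} applying $\tilde{\mathscr{B}}_{k,n-k}$. With that coproduct the homomorphism identity $\Delta\circ sh=\mu\circ\left(\Delta\otimes\Delta\right)$ is simply false, already in bidegree $\left(1,1\right)$: take $\dim V=1$ with $\sigma\left(v\otimes v\right)=q\,v\otimes v$, so $sh\left(v,v\right)=\left(1+q\right)v\otimes v$ and $\tilde{\mathscr{B}}_{1,1}=Id+\sigma^{-1}$; then the $\left(1,1\right)$-component of the left side is
\[
\left(1+q\right)\left(1+q^{-1}\right)v\otimes v=\left(2+q+q^{-1}\right)v\otimes v,
\]
while the right side gives $\mu\left(\left(v\otimes1\right)\otimes\left(1\otimes v\right)\right)+\mu\left(\left(1\otimes v\right)\otimes\left(v\otimes1\right)\right)=\left(1+q\right)v\otimes v$. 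The coproduct that makes item~(2) true for the shuffle product is \emph{plain} deconcatenation --- the canonical morphism $n\rightarrow p\otimes q$, precisely as in item~(3) of the ``Shuffle algebra'' proposition that follows this one in the paper. The $\tilde{\mathscr{B}}_{k,n-k}$-twisted splitting is instead the coproduct attached to the \emph{concatenation} product on $T_{H}\left(M\right)$ in Proposition \ref{prop:Coproduct}; the paper's remark about ``trivial coactions'' invites exactly this confusion, but that formula defines an algebra map out of the concatenation algebra, not out of $\left(T\left(V\right),sh\right)$. Once you replace your $\Delta$ by deconcatenation, your ``shuffle-then-cut versus cut-then-flip-then-shuffle'' bijection is the correct mechanism: a $\left(p,q\right)$-shuffle together with a cut at position $k$ factors uniquely as inner shuffles of the two pieces composed with a block flip, the lengths are additive in this factorization, and Matsumoto multiplicativity transports it to the lifts $T_{w}$, giving the weight-for-weight matching you describe.
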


\begin{prop}[{Shuffle algebra \cite[p. 408]{MRosso}}]
\textcolor{white}{\scriptsize{}T}

\begin{enumerate}
\item The shuffle algebra $\mathscr{S}$ in $\mathscr{B}$ is, as an object,
the direct sum of all objects $n$.
\item The product is given by the direct sum of its ``homogenous components'':
for all $n$ and $m$ in $\mathbb{N}$, one has a component in 
\[
Mor\left(n\otimes m,n+m\right)
\]
 which is the sum $T_{w}\in B_{n+m}$, $w$ ranging in the set of
$\left(n,m\right)$-shuffles of $\sum_{n+m}$.
\item The coalgebra structure is also given by the direct sum of its ``homogenous
components'': for all $n$, $p$, $q$ in $\mathbb{N}$ such that
$p+q=n$, the component in $Mor\left(n,p\otimes q\right)$ is the
canonical morphism $n\rightarrow p\otimes q$.
\item These product and coproduct are compatible if we give $\mathscr{S}\otimes\mathscr{S}$
the algebra structure deduced from the one on $\mathscr{S}$ by first
twisting the braiding:
\[
\left(Id\otimes T_{w_{p,q}}\otimes Id\right):\left(n+p\right)\otimes\left(q+m\right)\rightarrow\left(n+q\right)\otimes\left(p+m\right)
\]
\end{enumerate}
\end{prop}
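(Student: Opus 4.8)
The plan is to read this statement as the universal, object-free incarnation of the concrete constructions already performed on $T\left(V\right)$, and to prove it by exhibiting each piece of structure as a morphism built from the universal braids $T_{w}$ and $T_{w_{p,q}}$, after which the axioms become identities of braids. First I would fix $\mathscr{B}$ to be (the additive completion of) the braid category, whose objects are the natural numbers $n$ with $n\otimes m=n+m$ and whose morphism space $Mor\left(n,n\right)$ is spanned by $B_{n}$; the shuffle algebra $\mathscr{S}$ is then the object $\oplus_{n}n$, which is the universal analogue of $T\left(V\right)=\oplus_{k}V^{\otimes k}$. Part (1) is immediate from this definition. For part (2) I would declare the $\left(n,m\right)$-component of the product to be $\sum_{w\in\sum_{n,m}}T_{w}\in B_{n+m}$, read as a morphism $n\otimes m\rightarrow n+m$, exactly mirroring the shuffle product of the Proposition on shuffle multiplication; associativity is the assertion that both bracketings of a triple product equal $\sum_{w\in\sum_{\left(n,p,q\right)}}T_{w}$, which I would obtain from the decomposition of $\left(n,p,q\right)$-shuffles into nested two-block shuffles together with the multiplicativity of the Matsumoto lift on reduced words (so that $T_{w}T_{w'}=T_{ww'}$ whenever the lengths add), reducing the claim to the corresponding identity in $\sum_{n+p+q}$.

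For part (3), the $\left(p,q\right)$-component of the coproduct (with $p+q=n$) is the identity braid $Id_{n}$ read as a map $n\rightarrow p\otimes q$, i.e.\ deconcatenation; coassociativity is then trivial, since only identity braids are involved. The substance lies in part (4), the braided bialgebra compatibility: I would show that $\Delta$ is an algebra map for the twisted product on $\mathscr{S}\otimes\mathscr{S}$, that is
\[
\Delta\circ P=\left(P\otimes P\right)\circ\left(Id\otimes T_{w_{p,q}}\otimes Id\right)\circ\left(\Delta\otimes\Delta\right),
\]
where $P$ denotes the shuffle product and both sides are morphisms in $\mathscr{B}$. I would compare the two sides componentwise, matching each \emph{shuffle-then-cut} on the left with a \emph{cut-then-twist-then-reshuffle} on the right through a bijection of the indexing data, and reducing every matched pair to an equality of braid words via the braid relations; here $w_{p,q}$, the ``generalized flip'' of Proposition~\ref{prop:AssocAlgStuc}, is precisely the permutation that carries the two inner blocks past one another.

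The main obstacle is this compatibility in part (4): lifting the classical shuffle--deconcatenation bialgebra identity from permutations to braids without ambiguity. The hazard is that $T_{w}$ depends \emph{a priori} on a choice of reduced word, so every rearrangement of strands I use must be realized by genuinely equal braids rather than by permutations that merely happen to coincide. I expect to dispatch this in one of two ways. The direct route invokes the well-definedness of the Matsumoto section $w\mapsto T_{w}$ (see \cite{matsumoto}), so that each braid identity I need follows from the defining relations of $B_{n}$; this is the same mechanism underlying Theorem~\ref{thm:TwAA} and Proposition~\ref{prop:Coproduct}. The more economical route is functoriality: the concrete compatibility has already been established on $T\left(V\right)$ for an arbitrary braided space $\left(V,\sigma\right)$ in Proposition~\ref{prop:AssocAlgStuc}(2), and since the braid category admits a faithful image in $End\left(V^{\otimes\bullet}\right)$ for a sufficiently generic braiding (for instance one realizing the Lawrence--Krammer representation), an identity of morphisms that holds in every such $T\left(V\right)$ must already hold universally in $\mathscr{B}$. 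Parts (1)--(3) I expect to be routine once the bookkeeping of blocks and Matsumoto lifts is in place.
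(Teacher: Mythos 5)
The paper itself offers no proof of this proposition: it is quoted from \cite[p.~408]{MRosso} and left to that reference, so the relevant comparison is with Rosso's argument. Your main route reconstructs that argument essentially verbatim and is correct: parts (1)--(3) are definitional once the braid category $\mathscr{B}$ and the Matsumoto lifts are in place; associativity of (2) reduces to the unique factorization of $\left(n,p,q\right)$-shuffles into nested two-block shuffles with additive lengths, so that the lifts multiply; and the substance is (4), which you correctly reduce to the bijection matching each pair (an $\left(n,m\right)$-shuffle, a cut of $n+m$) with a quadruple (two cuts, two smaller shuffles, the generalized flip carrying the two inner blocks past one another), with length-additivity guaranteeing that the corresponding Matsumoto lifts compose to genuinely equal braid words rather than merely equal permutations. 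Your identification of the well-definedness of the section $w\mapsto T_{w}$ as the crux is exactly right.

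One warning about your ``more economical route'': it does not work as stated. Faithfulness of the Lawrence--Krammer representation gives injectivity of the group $B_{n}$, not linear independence of the images of group elements in $End\left(V^{\otimes n}\right)$; since $k\left[B_{n}\right]$ is infinite-dimensional while $End\left(V^{\otimes n}\right)$ is finite-dimensional for any fixed $V$, no single $T\left(V\right)$ is faithful at the algebra level, and an identity of operators holding in every $T\left(V\right)$ yields an identity in $\mathscr{B}$ only once one can separate the relevant elements of the group algebra. The cheap repair stays inside your own framework: both sides of (4) are, by the very length-additivity bookkeeping you invoke, linear combinations of Matsumoto lifts $T_{w}$ of \emph{distinct} permutations, and a generic diagonal braiding $\sigma\left(e_{i}\otimes e_{j}\right)=q_{ij}\left(e_{j}\otimes e_{i}\right)$ separates such lifts, since distinct underlying permutations produce distinct monomial operators on generic basis tensors. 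But that repair presupposes the combinatorial bijection, at which point your direct route has already completed the proof; so present the direct route as the proof and discard the functoriality shortcut.
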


\subsection{Block permutation of Lyndon words}

In classical shuffle algebras, the number of all possible block permutation
of $p$'s in $p^{n}\in S$ is trivially computed as $n!$. In case
of diagonal braiding $\sigma\left(v\otimes w\right)=q_{vw}w\otimes v$,
this is a bit more complicated. We address this with a lemma and a
corollary on block permutation on Lyndon words.
\begin{lem}[Block permutation of two equal primes]
\label{lem:BlockPerm2Primes}

Let $a\in S$ and $a=p^{2}$ its prime factorization. We denote the
string decomposition of $p$ as $\prod_{j=1}^{L}e_{j}$, where $L$
is its length. Let $w$ be in $\sum_{2}$, a subgroup of ``block
permutations'' that permutes the $p$'s among themselves. Whenever
we speak of ``block permutations'', we will express $w$ and $T_{w}$
in terms of permutations and braidings that are applied to the ``blocks''.
Let the braiding in $V\otimes V$ be given by $\sigma\left(e_{i}\otimes e_{j}\right)=q_{ij}\left(e_{j}\otimes e_{i}\right)$,
and let $Q=\prod_{k,l\in\left[\left[1,L\right]\right]}q_{kl}$. Then
either

\begin{enumerate}
\item $w=Id$ and $T_{w}\left(v_{a}\right)=v_{a}$;
\item $w=s_{1}$, $T_{w}=\sigma_{1}=\left(1\mapsto2\right)$, and $T_{w}\left(v_{a}\right)=Qv_{a}$;
or
\item $w=s_{1}$, $T_{w}=\sigma_{1}^{-1}=\left(2\mapsto1\right)$, and $T_{w}\left(v_{a}\right)=Q^{-1}v_{a}$
(N/A).
\end{enumerate}
\end{lem}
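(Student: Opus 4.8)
The plan is to directly compute the effect of the block permutation $w = s_1 \in \sum_2$ on $v_a = v_{p^2}$, where $p = \prod_{j=1}^{L} e_j$, by tracking how the diagonal braiding $\sigma(e_i \otimes e_j) = q_{ij}(e_j \otimes e_i)$ accumulates scalar factors as the first copy of $p$ is moved past the second. Since the braiding is diagonal, each elementary transposition contributes a scalar $q_{kl}$ while leaving the underlying string (up to the block swap) intact, so the net effect of $T_w$ on $v_a$ must be multiplication by a scalar times $v_a$; the content of the lemma is to identify that scalar as $Q = \prod_{k,l \in [[1,L]]} q_{kl}$.

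\smallskip

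\noindent\textbf{Key steps.}
First I would handle case (1): for $w = \mathrm{Id}$ we have $T_w = \mathrm{Id}$, so $T_w(v_a) = v_a$ immediately. Second, for case (2) I would write $T_{s_1} = \sigma_1$ as the positive lift and express the block swap of the two length-$L$ copies of $p$ as the generalized flip $T_{w_{L,L}}$ in $B_{2L}$, decomposed into a product of elementary generators $\sigma_i$. Third, I would apply these generators one at a time to $v_a = e_1 \otimes \cdots \otimes e_L \otimes e_1 \otimes \cdots \otimes e_L$: moving the element $e_k$ from the first block rightward past each element $e_l$ of the second block applies $\sigma(e_k \otimes e_l) = q_{kl}(e_l \otimes e_k)$, contributing exactly the factor $q_{kl}$. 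Running $k$ over $[[1,L]]$ (the first block) and $l$ over $[[1,L]]$ (the second block), the accumulated scalar is precisely $\prod_{k,l \in [[1,L]]} q_{kl} = Q$, and after the flip the string has returned to $e_1 \otimes \cdots \otimes e_L \otimes e_1 \otimes \cdots \otimes e_L = v_a$ since both blocks equal $p$. Hence $T_w(v_a) = Q\, v_a$. Finally, case (3) uses the inverse lift $T_{s_1} = \sigma_1^{-1}$; since $\sigma^{-1}(e_l \otimes e_k) = q_{kl}^{-1}(e_k \otimes e_l)$, the same bookkeeping yields the reciprocal scalar $Q^{-1}$, and this case is marked N/A because only $T_{s_j} = \sigma_j$ is used in shuffle-algebra constructions, as noted in the Matsumoto section discussion.

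\smallskip

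\noindent\textbf{Main obstacle.}
The only delicate point is the combinatorial bookkeeping in the third step: one must verify that every pair $(k,l)$ with $k$ indexing the first block and $l$ indexing the second block is crossed exactly once during the generalized flip $T_{w_{L,L}}$, so that each factor $q_{kl}$ appears with multiplicity one and the product is genuinely $Q$ rather than some permuted or repeated product. This follows from the fact that $w_{L,L}$ is the $(L,L)$-block transposition, whose reduced word in the $s_i$ realizes each such crossing precisely once; since the braiding is diagonal, the order in which these crossings are performed does not affect the final scalar, so the braid relation is automatically respected and the result is well defined. The remaining manipulations are routine.
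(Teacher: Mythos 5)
Your proposal is correct and follows essentially the same argument as the paper's proof: decompose the positive lift of the block swap into elementary crossings, observe that each pair consisting of one letter from each copy of $p$ crosses exactly once (so the diagonal braiding contributes each $q_{kl}$ with multiplicity one, accumulating $Q=\prod_{k,l\in\left[\left[1,L\right]\right]}q_{kl}$, with the string returning to $v_{a}$ because the blocks are equal), and handle case (3) as the inverse giving $Q^{-1}$. The only cosmetic difference is direction of traversal --- the paper moves each letter of the rightmost $p$ leftward, while you move the first block rightward via the generalized flip $T_{w_{L,L}}$ --- and your explicit remark that the scalar is independent of the chosen reduced expression is a welcome precision the paper leaves implicit.
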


\begin{proof}
(1) Trivial. (2) We will first consider the case when the rightmost
$p$ is transposed to the left. In that case, $T_{w}$ permutes each
of the elements $v_{x_{j}}$ composing the $p$'s such that they interchange
position. To do this, it is sufficient to permute each $v_{x_{j}}$
from the rightmost $p$ sequentially toward their respective place
in the beginning of the construction, starting from the leftmost one.
Each $v_{x_{j}}$ of the rightmost $p$ is thus transposed with one
$v_{x_{k}}$ for all $k\in\left[\left[1,L\right]\right]$ in the process.
Thus, permutation of each $v_{x_{j}}$ multiplies the construction
by $\prod_{k\in\left[\left[1,L\right]\right]}q_{x_{j}x_{k}}$. Overall,
for permutation of the $p$'s we have $T_{w}\left(X_{a}\right)=QX_{a}$.
(3) In case the rightmost $p$ is transposed to the right, $T_{w}$
is, as a whole and component-wise, an inverse of the the one in the
first case, so $T_{w}\left(X_{a}\right)=Q^{-1}X_{a}$.
\end{proof}
Only cases 1 and 2 in Lemma \ref{lem:BlockPerm2Primes} are applicable
to shuffle multiplication or construction of bases in terms of Lyndon
words by definition of shuffle multiplication (case 3 is not applicable).
\begin{cor}[Block permutation of $n$ equal primes]

Let $a\in S$ and $a=p^{n}$ its prime factorization. Let $w$ be
in $\sum_{n}$, a subgroup of ``block permutations'' that permutes
the $p$'s among themselves. (Whenever we speak of ``block permutations'',
we express $w$ and $T_{w}$ in terms of permutations and braidings
that are applied to ``blocks''.) Let the braiding in $V\otimes V$
be given by $\sigma\left(e_{i}\otimes e_{j}\right)=q_{ij}\left(e_{j}\otimes e_{i}\right)$,
and let $Q=\prod_{k,l\in\left[\left[1,L\right]\right]}q_{kl}$. Let
$T_{w}$ be the permutation of $p$'s that is a lift of $\left(j_{2},j_{1},j_{1}+1,\cdots,j_{2}-1\right)$,
where $j_{1}$ and $j_{2}$ correspond to $p$'s in positions $j_{1}$
and $j_{2}$ respectively such that $j_{1}<j_{2}$. Then, considering
the Lemma \ref{lem:BlockPerm2Primes} and that its item 3 is not applicable,
we have
\[
T_{w}\left(v_{a}\right)=Q^{\left(j_{2}-j_{1}\right)}v_{a}
\]
\end{cor}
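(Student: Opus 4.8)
The plan is to realize the cyclic block permutation as a sequence of adjacent block transpositions, accumulating exactly one factor of $Q$ at each step by appeal to Lemma \ref{lem:BlockPerm2Primes}. First I would read the cycle $\left(j_{2},j_{1},j_{1}+1,\cdots,j_{2}-1\right)$ as an operation on the $p$-blocks: it slides the block occupying position $j_{2}$ leftward until it reaches position $j_{1}$, pushing each of the intervening blocks in positions $j_{1},\cdots,j_{2}-1$ one step to the right. Such a slide factors into $j_{2}-j_{1}$ successive transpositions of \emph{adjacent} blocks, namely a swap of the blocks in positions $j_{2}-1$ and $j_{2}$, then of positions $j_{2}-2$ and $j_{2}-1$, and so on down to positions $j_{1}$ and $j_{1}+1$. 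Since every block is a copy of the same prime $p=\prod_{j=1}^{L}e_{j}$, each swap is precisely the generalized flip carrying one length-$L$ block past an identical one, i.e. the situation already resolved by Lemma \ref{lem:BlockPerm2Primes}, so no further braiding computation at the character level is required.

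Next I would apply that Lemma to each of the $j_{2}-j_{1}$ adjacent swaps in turn. Because the two blocks being transposed are identical copies of $p$, the swap returns the same word $v_{a}=v_{p^{n}}$, and by item 2 of the Lemma—the applicable case, since the rightmost of the two blocks is carried to the left, with item 3 excluded as in the statement—it multiplies the element by the scalar $Q=\prod_{k,l\in\left[\left[1,L\right]\right]}q_{kl}$. As the underlying word is left invariant at every step, the intermediate configuration never changes, so the scalar factors simply multiply and the net effect is multiplication by $Q^{\left(j_{2}-j_{1}\right)}$, giving $T_{w}\left(v_{a}\right)=Q^{\left(j_{2}-j_{1}\right)}v_{a}$. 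Equivalently, one may run a short induction on $j_{2}-j_{1}$, with base case $j_{2}-j_{1}=1$ being exactly Lemma \ref{lem:BlockPerm2Primes} and the inductive step appending one further adjacent swap, contributing one more factor of $Q$.

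The point requiring the most care is the compatibility of the Matsumoto lift $T_{w}$ of the single cycle $w$ with the product of the individual adjacent-swap lifts. Here I would note that the chosen decomposition into $j_{2}-j_{1}$ adjacent transpositions is a \emph{reduced} expression for the cycle: the cycle has length $j_{2}-j_{1}$ (one fewer than the $j_{2}-j_{1}+1$ positions it moves), which is exactly the number of swaps used, so no shorter decomposition exists. By definition of the Matsumoto section, $T_{w}$ is then the corresponding product of braid-group lifts, and its independence of the reduced expression chosen is guaranteed by the braid (Yang--Baxter) relations that $\sigma$ satisfies. With this identification in hand, the computation above for the product of block flips is precisely the evaluation of $T_{w}\left(v_{a}\right)$, completing the argument.
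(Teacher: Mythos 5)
Your proof is correct and takes essentially the same approach the paper intends: the paper's own proof is the one-line remark ``Trivial, using the method of mathematical induction,'' and your decomposition of the cycle into $j_{2}-j_{1}$ adjacent block transpositions, each contributing one factor of $Q$ by item 2 of Lemma \ref{lem:BlockPerm2Primes}, with induction on $j_{2}-j_{1}$, is precisely that induction made explicit. Your added care about the Matsumoto lift --- checking that the chosen decomposition is a reduced expression so that $T_{w}$ is indeed the product of the individual block-flip lifts --- is a detail the paper leaves implicit, and it is handled correctly.
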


\begin{proof}
Trivial, using the method of mathematical induction.
\end{proof}

\subsection{Bases in terms of Lyndon words}

For the case of quantum shuffle algebras, we need to only make a small
adjustment to Proposition \ref{prop:BasesCSA}, replacing factorials
by Mahonian numbers \cite{bona}.
\begin{prop}[Bases of quantum shuffle algebras in terms of Lyndon words]
\label{prop:BasesQSA}

Let $a\in S\left(u\right)$ and $a=\prod_{i=1}^{s}p_{i}^{n_{i}}$
be its unique prime factorization. We define $X_{a}=\prod_{i=1}^{s}v_{p_{i}}^{n_{i}}$,
where quantum multiplication is used between the terms of the form
$v_{p_{i}}$. Then:

\begin{enumerate}
\item $X_{a},a\in S\left(u\right)$ form a basis of $T\left(V\right)\left(u\right)$;
and
\item the change of basis with respect to $a$ is triangular, i.e. there
exist $\alpha_{ab}\in k$ such that $X_{a}=\sum_{a\le b}\alpha_{ab}v_{b}$;
\item setting for $1\le i\le s$ $p_{i}=\prod_{j=1}^{l_{i}}x_{ij}$ and
$Q_{i}=\prod_{k,l\in\left[\left[1,l_{i}\right]\right]}q_{x_{k}x_{l}}$,
we have $\alpha_{aa}=\prod_{j=1}^{s}\left(\left[n_{j}\right]_{Q_{j}}!\right)\ne0$
for all $Q_{i}\in k$ being different from unity, or $Q_{i}$'s indeterminate;
\item the $X_{a}$'s, $a\in S$, form a linear basis of $T\left(V\right)$;
\item the $v_{p}$'s, $p\in P$, form a polynomial basis for $T\left(V\right)$;
\item $X_{a}\cdot X_{b}=X_{a\cdot b}$ for $a,b\in S$.
\end{enumerate}
\end{prop}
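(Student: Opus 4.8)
The plan is to adapt the classical scheme behind Theorem \ref{prop:BasesCSA} to the diagonal-braiding setting, using Theorem \ref{thm:TwAA} as the engine for triangularity and the block-permutation results (Lemma \ref{lem:BlockPerm2Primes} and its Corollary) to compute the diagonal coefficient. First I would unfold the definition of $X_a$: by associativity of the quantum shuffle product, the iterated product $\prod_{i=1}^s v_{p_i}^{n_i}$ equals $\sum_w T_w(v_a)$, where $w$ ranges over the shuffles whose block sizes are the lengths $l_i$ of the primes $p_i$, each repeated with multiplicity $n_i$. Because the braiding is diagonal, $\sigma(e_i\otimes e_j)=q_{ij}(e_j\otimes e_i)$, each $T_w(v_a)$ is a scalar multiple $c_w\,v_{w(a)}$ of a single basis word, where $c_w$ is a product of $q_{ij}$'s and $w(a)$ is the permuted word. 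Grouping by the resulting word $b=w(a)$ gives $X_a=\sum_b \alpha_{ab}\,v_b$ with $\alpha_{ab}=\sum_{w:\,w(a)=b} c_w$.

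For part (2), Theorem \ref{thm:TwAA}(1) gives $w(a)\ge a$ for every shuffle $w$ of this type, so only words $b\ge a$ occur, which is the triangularity. For part (3), Theorem \ref{thm:TwAA}(2) says $w(a)=a$ precisely for the block permutations $w\in\sum_{n_1}\times\cdots\times\sum_{n_s}$, so $\alpha_{aa}=\sum_{w\in\sum_{n_1}\times\cdots\times\sum_{n_s}} c_w$. Since each block $p_i^{n_i}$ occupies a contiguous stretch of the word, $c_w$ factorizes over the blocks as $\prod_i c_{w_i}$, and by Lemma \ref{lem:BlockPerm2Primes} each adjacent transposition of two equal copies of $p_i$ contributes the factor $Q_i$; hence a block permutation $w_i\in\sum_{n_i}$ of reduced length $l(w_i)$ contributes $Q_i^{l(w_i)}$ (well defined since all reduced words share the same length). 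Summing the Mahonian generating function, $\sum_{w_i\in\sum_{n_i}} Q_i^{l(w_i)}=\prod_{m=1}^{n_i}(1+Q_i+\cdots+Q_i^{m-1})=[n_i]_{Q_i}!$, so $\alpha_{aa}=\prod_{i=1}^s [n_i]_{Q_i}!$. Regarding the $Q_i$ as indeterminate (or $q$ not a root of unity), each $q$-integer is nonzero and hence so is the product, giving $\alpha_{aa}\ne 0$; nonvanishing fails exactly when some $Q_i$ is a root of unity of order at most $n_i$, which is the source of the root-of-unity caveat noted in the abstract.

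Parts (1), (4), and (5) are then linear algebra. Since $S(u)$ is finite and $\{v_b:b\in S(u)\}$ is a basis of $T(V)(u)$, the transition matrix $(\alpha_{ab})_{a,b\in S(u)}$ is triangular with respect to $\le$ with nonzero diagonal entries $\alpha_{aa}$, hence invertible; therefore $\{X_a:a\in S(u)\}$ is also a basis of $T(V)(u)$, which is part (1). Summing over the $U$-grading $T(V)=\oplus_u T(V)(u)$ of Definition \ref{def:uG} assembles these into a basis $\{X_a:a\in S\}$ of $T(V)$, which is part (4). For part (5), each $X_a$ is the ordered monomial $\prod_i v_{p_i}^{n_i}$ in the prime generators, and the bijection between words and their prime factorizations (unique prime factorization together with Proposition \ref{prop:FormUPF}) shows that these monomials span and are linearly independent, so the $v_p$, $p\in P$, form a polynomial (PBW-type) basis.

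The main obstacle I expect is part (6). By associativity, $X_a\cdot X_b$ is the single ordered quantum product of the prime generators of $a$ followed by those of $b$, and this equals $X_{ab}$ immediately when the two prime factorizations concatenate into the canonical non-decreasing order of the factorization of $ab$ prescribed by Proposition \ref{prop:FormUPF}. The delicate point is that concatenation of words does not in general preserve the Lyndon factorization: adjacent primes in the wrong order fuse into a longer prime by \ref{prop:FormUPF}, so the careful step is to track how the prime factorization of $ab$ is built from those of $a$ and $b$ and to verify that the ordered product of generators reorganizes into $X_{ab}$ accordingly. This bookkeeping, rather than any new idea, is where the argument must be handled with care; once it is settled, part (6) follows from the associativity of the quantum shuffle product established earlier.
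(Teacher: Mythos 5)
For items (1)--(5) your argument is correct and essentially reproduces the paper's own proof: triangularity from Theorem \ref{thm:TwAA}(1), identification of the diagonal stabilizer with the block permutations $\sum_{n_{1}}\times\cdots\times\sum_{n_{s}}$ from Theorem \ref{thm:TwAA}(2), the per-block scalar computation via Lemma \ref{lem:BlockPerm2Primes}, and then invertibility of a triangular matrix over the finite set $S\left(u\right)$, assembled over the $U$-grading. The only cosmetic difference is in (3): the paper runs an induction on $n_{1}$, inserting the last copy of $p_{1}$ into each of the $r+1$ available positions to obtain $\left[r\right]_{Q_{1}}!\,\sum_{m=0}^{r}Q_{1}^{m}=\left[r+1\right]_{Q_{1}}!$, while you invoke the closed-form Mahonian identity $\sum_{w\in\sum_{n}}Q^{l\left(w\right)}=\left[n\right]_{Q}!$; these are the same computation in different clothing. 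Your sharpened nonvanishing criterion is in fact more accurate than both the statement and the paper's proof (which only excludes $Q_{i}=1$): in characteristic $0$ the case $Q_{i}=1$ is harmless, since $\left[n_{i}\right]_{1}!=n_{i}!\ne0$, whereas $\left[n_{i}\right]_{Q_{i}}!$ genuinely vanishes when $Q_{i}$ is a nontrivial root of unity of order at most $n_{i}$, e.g. $\left[2\right]_{-1}!=0$.

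The genuine gap is item (6), which you flag but never close --- and your suspicion is correct that it cannot be closed by bookkeeping. The paper's own proof of (6) is a one-line definition chase that silently assumes the concatenation of the two prime factorizations is the prime factorization of $a\cdot b$, i.e. that the largest prime of $a$ is $\le$ the smallest prime of $b$; by Proposition \ref{prop:FormUPF} and uniqueness, a non-decreasing concatenation of primes is automatically the factorization of the product, and in that case (6) is immediate from associativity, exactly as you say. When the junction is out of order the identity is simply false, not merely delicate: take single letters $x<y$, so that under the paper's ordering $x$, $y$, and $yx$ are primes while $xy$ factors as $x\cdot y$. Then $X_{yx}=v_{yx}$, but $X_{y}\cdot X_{x}=v_{y}\cdot v_{x}=v_{yx}+q_{yx}v_{xy}=q_{yx}X_{xy}+\left(1-q_{xy}q_{yx}\right)X_{yx}$, a genuine linear combination of two basis elements (even classically, with all $q_{ij}=1$, one gets $X_{xy}$ rather than $X_{yx}$). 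So no reorganization of the ordered product of prime generators can yield the single element $X_{a\cdot b}$: the fusing of primes across the junction changes the answer, not just its accounting. The correct repair --- for your write-up, and implicitly for the paper's --- is to state (6) with the explicit hypothesis that the two factorizations concatenate in non-decreasing order, under which your associativity argument is complete.
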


\begin{proof}
Let $l_{i}$ be the length of $p_{i}$. By definition of shuffle multiplication,
we have $X_{a}=\mathscr{B}_{\left(l_{1},\cdots,l_{s}\right)}\left(v_{a}\right)$,
where $v_{a}=\otimes_{i=1}^{s}v_{p_{i}}^{n_{i}}$.

First we will prove (2). 

Indeed, we have $X_{a}=\mathscr{B}_{\left(l_{1},\cdots,l_{s}\right)}\left(v_{a}\right)=\sum_{w\in\sum_{\left(l_{1},\cdots,l_{s}\right)}}T_{w}\left(v_{a}\right)$.
By Lemma 1, $T_{w}\left(a\right)\ge a$ for all $w\in\sum_{\left(l_{1},\cdots,l_{s}\right)}$,
so $X_{a}=\sum_{b\ge a}\alpha_{ab}v_{b}$, where $\alpha_{ab}=0$
if $b\notin S\left(u\right)$.

Now we will prove (3).

We note that braiding in $V\otimes V$ is given by $\sigma\left(e_{i}\otimes e_{j}\right)=q_{ij}\left(e_{i}\otimes e_{j}\right)$.

We will rewrite $\left[n_{j}\right]_{Q_{j}}!$ as

\[
\left[n_{j}\right]_{Q_{j}}!=\prod_{i=1}^{n_{j}}\left[i\right]_{Q_{j}}=\prod_{i=1}^{n_{j}}\frac{1-Q_{j}^{i}}{1-Q_{j}^{1}}=\prod_{i=1}^{n_{j}}\sum_{m=0}^{i-1}Q_{j}^{m}
\]

We note that the expanded form of the Mahonian number formula 
\[
\left[n_{j}\right]_{Q_{j}}!=\prod_{i=1}^{n_{j}}\sum_{m=0}^{i-1}Q_{j}^{m}
\]
is valid even if $Q_{j}=1$.

From Theorem \ref{thm:TwAA}, we know that if $w\in\sum_{\left(l_{1},\cdots,l_{s}\right)}$,
then $T_{w}\left(a\right)=a$ if and only if $w$ is in the subgroup
$\sum_{n_{1}}\times\cdots\times\sum_{n_{s}}$ of ``block permutations'',
permuting only $p_{i}$'s among themselves for each $i$.

Therefore, it is sufficient to prove (3) for the case $s=1$, i.e.
$a=p_{1}^{n_{1}}$, with the decomposition $p_{1}=\prod_{j=1}^{l_{1}}x_{1j}$.

We will use the method of mathematical induction.

Step 1. We will prove the formula
\[
\alpha_{aa}=\left[n_{1}\right]_{Q_{1}}!
\]
 is valid for the case $n_{1}=1$. Indeed, then $\sum_{n_{1}}=\sum_{_{1}}$
consists only of the identity element and

\[
\alpha_{aa}=1=\left[n_{1}\right]_{Q_{1}}!
\]
as in that case $\left[n_{1}\right]_{Q_{1}}!=\prod_{i=1}^{1}\sum_{m\in\left\{ 0\right\} }Q_{1}^{m}=1$.

Step 2. We will prove that if the formula 
\[
\alpha_{aa}=\left[n_{1}\right]_{Q_{1}}!
\]
is valid for $n_{1}=r\in\mathbb{N}$, it is valid for $n_{1}=r+1$.
We note the natural inclusion $\iota:B_{r}\rightarrow B_{r+1}$. For
the permutations $T_{w_{1}}$ of the first $r$ $p_{1}$'s among themselves
\[
\left[r\right]_{Q_{1}}!=\prod_{i=1}^{r}\sum_{m=0}^{i-1}Q_{1}^{m}
\]
In case of $n_{1}=r+1$, to obtain the permutations $T_{w}$ of all
$p$'s among themselves, we take compositions $T_{w}=T_{w_{2}}\circ T_{w_{1}}$of
permutations $T_{w_{1}}$ of the first $r$ $p_{1}$'s among themselves
and the additional permutations $T_{w_{2}}$ of the last $p_{i}$
to each possible position relative to the first $r$ $p_{i}$'s. By
Lemma \ref{lem:BlockPerm2Primes}, if $T_{w}$ permutes the $p$ in
position $r+1$ to position $1\le i\le r$, then $T_{w}\left(X_{a}\right)=Q^{2t}X_{a}$.
We note that $T_{w}$ is then already written in terms of reduced
expressions by construction. Adding these factors and multiplying
by $\left[r\right]_{Q_{1}}!$, we get 

\[
T_{w}\left(X_{a}\right)=\left[r\right]_{Q_{1}}!\sum_{m=0}^{r}Q_{1}^{m}X_{a}=\left[r+1\right]_{Q_{1}}!X_{a}
\]
and
\[
\alpha_{aa}=\left[n_{1}\right]_{Q_{1}}!
\]
is valid for $n_{1}=r+1$.

By definition of a Mahonian number, assuming that $Q_{1}\ne1$, 
\[
\alpha_{aa}=\left[n_{1}\right]_{Q_{1}}!\ne0
\]

Now we will prove (1).

Consider the basis $\left\{ v_{a}|a=\prod_{j=1}^{k}x_{j}\in S\left(u\right),k\in\mathbb{N}\right\} $of
$S\left(u\right)$. Since $\alpha_{aa}\ne0$, the matrix $\left(\alpha_{ab}\right)$
has a nonzero determinant, and therefore $X_{a},a\in S\left(u\right)$
form a basis of $T\left(V\right)\left(u\right)$.

Item (4) follows from Definition \ref{def:uG} with exclusion of $0\in U$
(as it does not correspond to an element of $V$), i.e. 
\[
\oplus_{k=1}^{+\infty}V^{\otimes k}=\oplus_{0\ne u\in U}T\left(V\right)\left(u\right)
\]

Item (5) follows from (4) and from the fact that $X_{a}=\prod_{i=1}^{s}v_{p_{i}}^{n_{i}}$
can be expressed as a monomial in $v_{p}$'s. Item (6) follows by
definition of $X_{a}$: indeed, for $X_{a}=\prod_{i=1}^{s}v_{p_{i}}^{n_{i}}$
and $X_{b}=\prod_{i=1}^{s^{\prime}}v_{p_{i}^{\prime}}^{n_{i}^{\prime}}$
we have $X_{a}\cdot X_{b}=\prod_{i=1}^{s}v_{p_{i}}^{n_{i}}\cdot\prod_{i=1}^{s^{\prime}}v_{p_{i}^{\prime}}^{n_{i}^{\prime}}$,
but that is $X_{a\cdot b}$ by definition.
\end{proof}

\paragraph*{Notes on $\alpha_{aa}$}

As mentioned earlier, when constructing bases in terms of Lyndon words
for quantum shuffle algebras and quantum group algebras, we only use
$\sigma$ and not $\sigma^{-1}$.
\begin{enumerate}
\item If all $Q_{i}\in k$ are different from unity or indeterminate, then
$\left[n_{j}\right]_{Q_{j}}!$ is the Mahonian number such that
\[
\left[n\right]_{q}!=\prod_{j=1}^{n}\sum_{i=0}^{j-1}q^{i}=\prod_{j=1}^{n}\frac{1-q^{j}}{1-q}
\]
\item If some $Q_{i}\in k$ are unity, then we can't use the geometric series
formula for Mahonian numbers $\left[n_{j}\right]_{Q_{j}}!$ , and
use the expanded form instead:
\[
\left[n\right]_{q}!=\prod_{j=1}^{n}\sum_{i=0}^{j-1}q^{i}=\prod_{j=1}^{n}j=n!
\]
\end{enumerate}

\paragraph*{Special alternative case}

If we (1) use both $\sigma$ and $\sigma^{-1}$, (2) consider both
permutations and braidings in the sense of circular permutations and
braidings and not in the sense of linear arrangements, (3) continue
use the reduced expressions for permutations, (4) use symmetry considerations
in case of an ambiguity, and (5) set $Q_{i}^{2}=\prod_{k,l\in\left[\left[1,l_{i}\right]\right]}q_{x_{k}x_{l}}$,
then
\begin{enumerate}
\item If all $Q_{i}\in k$ are different from a square root of unity unity
or indeterminate, then $\left[n_{j}\right]_{Q_{j}}!$ has the standard
quantum group algebra definition
\[
\left[n\right]_{q}!=\prod_{i=1}^{n}\left[i\right]_{q}=\prod_{i=1}^{n}\frac{q^{i}-q^{-i}}{q-q^{-1}}=\prod_{i=1}^{n}\sum_{m\in M_{i}}q_{j}^{m}
\]
where $M_{i}=\left\{ m|-i+1\le m\le i-1\wedge m+i-1\in2\mathbb{Z}\right\} $,
i.e. $M_{i}=\left\{ -i+1,-i+3,\cdots,i-1\right\} $.
\item If some $Q_{i}\in k$ are square roots of unity, then we can't use
the formula $\left[i\right]_{q}=\frac{q^{i}-q^{-i}}{q-q^{-1}}$, thus
we have
\[
\left[n\right]_{q}!=\prod_{i=1}^{n}\sum_{m\in M_{i}}q_{j}^{m}
\]
\end{enumerate}

\section{Implementation in Mathematica}

We have implemented a Wolfram Mathematica package with functions for
quantum shuffle algebra and bases construction in terms of Lyndon
words. Since using a generic totally ordered set $X$ would have added
a layer of abstraction that is not necessarily essential for our purposes,
we have used $S=\left(\mathbb{N}\right)$ for convenience.

\subsection{Function listings}

The package functions include:
\begin{enumerate}
\item Comparison of two elements $a,b\in S$ using the total ordering on
$S$;
\begin{lstlisting}[language=Mathematica,basicstyle={\scriptsize\ttfamily},breaklines=true]
QSARelation[x_List, y_List] := (
   Module[{rel = 0, i}, (
     For[i = 1, i <= Min[Length[x], Length[y]], i++,
      If[rel == 0 && x[[i]] < y[[i]], rel = -1];
      If[rel == 0 && x[[i]] > y[[i]], rel = 1];
      ];
     If[rel == 0 && Length[x] < Length[y], rel = 1];
     If[rel == 0 && Length[x] > Length[y], rel = -1];
     rel
     )]);
\end{lstlisting}
\item Test of whether $v_{a}$ ($a\in S$) is prime;
\begin{lstlisting}[language=Mathematica,basicstyle={\scriptsize\ttfamily},breaklines=true]
QSAIsPrime[x_List] := (
   Module[{ans = True, i1}, (
     If[Length[x] > 1, {
       For[i1 = 2, i1 <= Length[x], i1++, {
          If[QSARelation[x[[i1 ;; -1]], x] > -1, ans = False]
          }];
       }];
     ans
     )]);
\end{lstlisting}
\item Finding the first prime of $v_{a}$ ($a\in S$) in its unique prime
factorization;
\begin{lstlisting}[language=Mathematica,basicstyle={\scriptsize\ttfamily},breaklines=true]
QSAFirstPrime[x_List] := (
   Module[{ans2 = {}, i2 = 1}, (
     While[
      i2 <= Length[x] && 
       QSAIsPrime[x[[1 ;; i2]]], {ans2 = x[[1 ;; i2]]; i2++;}];
     ans2
     )]);
\end{lstlisting}
\item Unique prime factorization (UPF) of $v_{a}$ ($a\in S$);
\begin{lstlisting}[language=Mathematica,basicstyle={\scriptsize\ttfamily},breaklines=true]
QSAUniquePrimeFactorization[x_List] := (
   Module[{remx = x, upf = {}, nextprime}, (
     While[Length[remx] > 0, {
       nextprime = QSAFirstPrime[remx];
       AppendTo[upf, nextprime];
       If[Length[nextprime] < Length[remx], 
        remx = remx[[Length[nextprime] + 1 ;; -1]], remx = {}];
       }];
     upf
     )]);
\end{lstlisting}
\item Partition, permutation, and other auxiliary functions;
\begin{lstlisting}[language=Mathematica,basicstyle={\scriptsize\ttfamily},breaklines=true]
QSAV[x_List, xi_] := (
   Apply[TensorProduct, Map[Subscript[v, xi[[#]]] &, x]]
   );
QSASumV[x_List, xi_] := (
   Module[{sum = 0}, (
     AddFun[item1_] := (
       If[ListQ[item1[[1]]], 
        sum = sum + item1[[2]] QSAV[item1[[1]], xi], 
        sum = sum + QSAV[item1, xi]]
       );
     Map[AddFun, x];
     sum
     )]);
QSAIndexToObject[x_, xi_] := (
   If[ListQ[x[[1]]], Map[xi[[#]] &, x[[;; , 1]]], Map[xi[[#]] &, x]]
   );
QSALengthToPartitionIndex[L_List] := (
   Module[{M = {1}, CurM = 1, i3}, (
     For[i3 = 1, i3 <= Length[L], i3++, {
       CurM = CurM + L[[i3]];
       AppendTo[M, CurM];
       }];
     M
     )]);
Permutation[xlist_List, yitem_, aword_, qletter_: q] := (
   Module[{plist, k, i}, (
     plist = {};
     For[k = 1, k <= Length[xlist], k++, {
       For[i = 1, i <= Length[xlist[[k, 1]]] + 1, i++, {
         AppendTo[
          plist, {Insert[xlist[[k, 1]], yitem, i], 
           xlist[[k, 2]] Product[
             If[qletter === 1, 1, Subscript[qletter, aword[[jj]], 
              aword[[Length[xlist[[k, 1]]] + 1]]]], {jj, i, 
              Length[xlist[[k, 1]]]}]}]
         }]
       }];
     plist
     )]);
PermuteList[zlist_List, aword_, qletter_: q] := (
   Module[{plist2, i1}, (
     plist2 = {};
     If[Length[zlist] == 1, plist2 = {{zlist, 1}}, {
       plist2 = {{{zlist[[1]]}, 1}};
       For[i1 = 2, i1 <= Length[zlist], i1++, {
         plist2 = Permutation[plist2, zlist[[i1]], aword, qletter]
         }];
       }];
     plist2
     )]);
Complement1[list1_List, list2_List] := (
   Module[{list3, list4 = {}, i}, (
     list3 = Complement[list1[[;; , 1]], list2[[;; , 1]]];
     For[i = 1, i <= Length[list1], i++, {
       If[MemberQ[list3, list1[[i, 1]]], AppendTo[list4, list1[[i]]]]
       }];
     list4
     )]);
QSAWordToLength[x_List] := (
   Map[Length[#] &, x]
   );
\end{lstlisting}
\item Quantum shuffle multiplication $v_{a}\cdot v_{b}$ ($a,b\in S$);
\begin{lstlisting}[language=Mathematica,basicstyle={\scriptsize\ttfamily},breaklines=true]
QSAGeneratePrimaryShuffles[L_List, aword_, qpar_: True] := (
   Module[{ShuffleList = {}, LL = Total[L], 
     M = QSALengthToPartitionIndex[L], ShuffleListTemp = {}, i4}, (
     For[i4 = 1, i4 < Length[M], i4++, {
       AppendTo[ShuffleListTemp, Range[M[[i4]], M[[i4 + 1]] - 1]];
       }];
     ShuffleList = 
      PermuteList[ShuffleListTemp, Range[Length[ShuffleListTemp]], 
       If[qpar, Q, 1]];
     For[i4 = 1, i4 <= Length[ShuffleList], i4++, {
       ShuffleList[[i4]] = {Flatten[ShuffleList[[i4, 1]]], 
          ShuffleList[[i4, 2]]};
       }];
     ShuffleList
     )]);
QSAGenerateShuffles[L_List, aword_, qpar_: True] := (
   Module[{ShuffleList = {}, LL = Total[L], 
     M = QSALengthToPartitionIndex[L], i4, item1, plist0}, (
     ShuffleListTemp = PermuteList[Range[LL], aword, If[qpar, q, 1]];
     AddShuffle[item_] := (
       TestF = True;
       item1 = item[[1]];
       For[i4 = 1, i4 < Length[M], i4++, {
         If[
           Not[OrderedQ[
             Select[item1, # >= M[[i4]] && # < M[[i4 + 1]] &]]], 
           TestF = False];
         }];
       If[TestF, AppendTo[ShuffleList, item]];
       );
     Map[AddShuffle, ShuffleListTemp];
     plist0 = QSAGeneratePrimaryShuffles[L, aword, qpar];
     ShuffleList = Union[plist0, Complement1[ShuffleList, plist0]];
     ShuffleList
     )]);
QSAGenerateSecondaryShuffles[L_List, aword_, qpar_: True] := (
   Complement1[QSAGenerateShuffles[L, aword, qpar], 
    QSAGeneratePrimaryShuffles[L, aword, qpar]]
   );
QSAShuffleMultiplication[x_List] := 
  QSASumV[QSAGenerateShuffles[QSAWordToLength[x], Flatten[x]], 
   Flatten[x]];
\end{lstlisting}
\pagebreak{}
\item Calculation of $X_{a}$ ($a\in S$);
\begin{lstlisting}[language=Mathematica,basicstyle={\scriptsize\ttfamily},breaklines=true]
QSAX[x_, qpar_: True] := 
  QSASumV[QSAGenerateShuffles[
    QSAWordToLength[QSAUniquePrimeFactorization[x]], x, qpar], x];
\end{lstlisting}
\item Expression of $v_{a}$ ($a\in S$) in terms of Lyndon words (primes)
.
\begin{lstlisting}[language=Mathematica,basicstyle={\scriptsize\ttfamily},breaklines=true]
QSAExpressInLyndonWords[x_] := (
   Module[{shuffles, shuffles1, lhs = {}, rhs = {}, eqns = {}, 
     rhs1 = {}, ia, avec, coeffarrays, sol}, (
     shuffles = QSAGenerateShuffles[L, x];
     shuffles1 = QSAIndexToObject[shuffles, a];
     For[ia = 1, ia <= Length[shuffles1], ia++, {
       AppendTo[lhs, Evaluate[Subscript[X, shuffles1[[ia]]]]];
       AppendTo[rhs, QSAX[shuffles1[[ia]]]];
       AppendTo[rhs1, QSAX[shuffles1[[ia]], False]];
       AppendTo[eqns, 
        Evaluate[Subscript[X, shuffles1[[ia]]]] == 
         QSAX[shuffles1[[ia]]]];
       }];
     avec = DeleteDuplicates[Flatten[rhs1, 1, Plus]];
     coeffarrays = Normal[CoefficientArrays[eqns, avec]];
     sol = LinearSolve[coeffarrays[[2]], coeffarrays[[1]]];
     MapThread[#1 == #2 &, {avec, sol}]
     )]);
\end{lstlisting}
\end{enumerate}

\subsection{Calculation examples}
\begin{example}[Unique prime factorization]

Consider $a=18\cdot19\cdot4\cdot8\cdot5\cdot7$. It is written in
Mathematica notation using an ordered list as $a=\{18,19,4,8,5,7\}$.
(Since in this section we are specifically speaking of the Mathematica
package, we will disregard the usual notation of curly brackets for
an unordered set and use it only for an ordered list in this section.)

Using the function $\mathtt{QSAUniquePrimeFactorization}$, we obtain
the UPF as an ordered list of two ordered lists $\left\{ \left\{ 18\right\} ,\left\{ 19,4,8,5,7\right\} \right\} $,
which means that $a=p_{1}\cdot p_{2}$, where primes $p_{1}$and $p_{2}$
are $p_{1}=18$ and $p_{2}=19\cdot4\cdot8\cdot5\cdot7$.
\end{example}

\begin{example}[Calculation of $X_{a}$ ($a\in S$)]
\textcolor{white}{\scriptsize{}T}{\scriptsize\par}

Consider again $a=\{18,19,4,8,5,7\}$ in the Mathematica notation
for an ordered list. Using the function $\mathtt{QSAX}$, we obtain
for $X_{a}$ that 

\begin{multline*}
X_{a}=v_{18}\otimes v_{19}\otimes v_{4}\otimes v_{8}\otimes v_{5}\otimes v_{7}+\\
+Q_{1,2}v_{19}\otimes v_{4}\otimes v_{8}\otimes v_{5}\otimes v_{7}\otimes v_{18}+\\
+q_{4,8}q_{8,5}q_{18,19}q_{19,4}v_{19}\otimes v_{4}\otimes v_{8}\otimes v_{5}\otimes v_{18}\otimes v_{7}+\\
+q_{4,8}q_{18,19}q_{19,4}v_{19}\otimes v_{4}\otimes v_{8}\otimes v_{18}\otimes v_{5}\otimes v_{7}+\\
+q_{18,19}q_{19,4}v_{19}\otimes v_{4}\otimes v_{18}\otimes v_{8}\otimes v_{5}\otimes v_{7}+\\
+q_{18,19}v_{19}\otimes v_{18}\otimes v_{4}\otimes v_{8}\otimes v_{5}\otimes v_{7}
\end{multline*}
where $\sigma\left(v_{i}\otimes v_{j}\right)=q_{ij}\left(v_{j}\otimes v_{i}\right)$
for $i,j\in\mathbb{N}$ and, for conciseness purposes, $\sigma\left(v_{i}\otimes v_{j}\right)=Q_{ij}\left(v_{j}\otimes v_{i}\right)$
for $i,j\in P\subset S=\left(\mathbb{N}\right)$.

Using the function $\mathtt{QSAPrimaryCoefficient}$, we obtain $\alpha_{aa}=1$.
\end{example}

\begin{example}[Quantum shuffle multiplication]
\label{exa:QSM}

Consider $b=\left\{ 5,10,10\right\} $ and $c=\left\{ 7,4,10\right\} $.
Using the function $\mathtt{QSAShuffleMultiplication}$, we obtain
for quantum shuffle multiplication $v_{b}\cdot v_{c}$ the result
in Figure \ref{fig:QSMR}.

\begin{figure}
\includegraphics[angle=90,height=0.9\textheight]{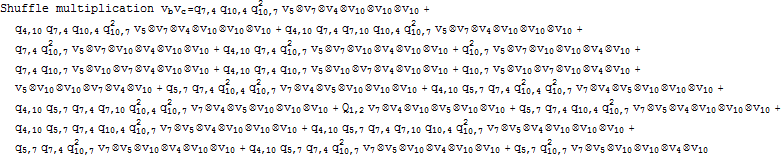}\caption{\label{fig:QSMR}Result in Mathematica for quantum shuffle multiplication
$v_{b}\cdot v_{c}$ in Example \ref{exa:QSM}.}
\end{figure}
\end{example}

\begin{example}[Expression of $v_{a}$ ($a\in S$) in terms of $X_{c}$'s ($c\in S$
and $c\ge a$)]
\label{exa:EITLW}

We again consider $a=\{18,19,4,8,5,7\}$. Solving the matrix equation,
we express $v_{a}$ and other summands in $X_{a}$ in terms of $X_{c}$'s
($c\ge a$). The result is shown in Figure \ref{fig:QSMR-1}.

\begin{figure}
\includegraphics[angle=90,height=0.9\textheight]{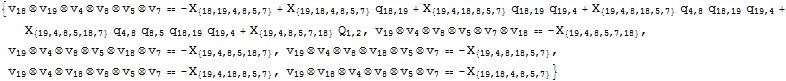}\caption{\label{fig:QSMR-1}Result in Mathematica for expression of $v_{a}$
in terms of Lyndon words in Example \ref{exa:EITLW}.}
\end{figure}
\end{example}

\section{Quantum group algebras II}

\subsection{Idea and underlying principles}

We can adapt the construction of bases of quantum shuffle algebras
in terms of Lyndon words to quantum group algebras by noting the following:
\begin{enumerate}
\item Braiding is defined on $T\left(V\right)$ for a $U_{q}\left(\mathfrak{g}\right)$-module
$V$ by the universal R-matrix and on $S_{\sigma}\left(M^{R}\right)$
by an $N\times N$ matrix $\left(q_{ij}\right)$.
\item Specifying a $U_{q}\left(\mathfrak{g}\right)$-module $T\left(V\right)$,
where $V=\mathfrak{g}$, we can define an associative structure on
$T\left(V\right)$, for $x_{1},\cdots,x_{n}$ in $V$,
\[
\left(x_{1}\otimes\cdots\otimes x_{p}\right)\cdot\left(x_{p+1}\otimes\cdots\otimes x_{n}\right)=\sum_{w\in\sum_{p,n-p}}T_{w}\left(x_{1}\otimes\cdots\otimes x_{n}\right)
\]
Proof of the associativity is the same as in case of quantum shuffle
algebras.
\item If case the quantum group parameter $q$ is unity, the R-matrix is
$R=1\otimes1$, and braiding is trivial: $\sigma\left(v\otimes w\right)=w\otimes v$.
\item One should keep in mind that generators of $U_{q}\left(\mathfrak{g}\right)$
are not the linear basis of $V$.
\item Failing to adapt the bases construction method for quantum group algebras
as a whole, we will use the same idea and principles for subalgebras
of quantum group algebras.
\end{enumerate}

\subsection{Braiding and the universal $R$-matrix}

If $v\in V$ and $w\in W$, then 
\[
\sigma\left(v\otimes w\right)=\tau\left(R\left(v\otimes w\right)\right)
\]
where $\tau$ is the flip $\tau:v\otimes w\mapsto w\otimes v$ and
$R$ is the universal $R$-matrix of the form \cite[p. 175]{kassel}
\[
R=\sum_{i}s_{i}\otimes t_{i}
\]
i.e.
\[
\sigma\left(v\otimes w\right)=\sum_{i}t_{i}w\otimes s_{i}v
\]

\begin{prop}[Block permutation of two equal primes]

Let $a\in S$ and $a=p^{2}$ its prime factorization. We denote the
string decomposition of $p$ as $\prod_{j=1}^{L}e_{j}$, where $L$
is its length. Let $w$ be in $\sum_{2}$, a subgroup of ``block
permutations'' that permutes the $p$'s among themselves. Let the
braiding in $V\otimes V$ be given by $\sigma\left(e_{i}\otimes e_{j}\right)=\tau\left(R\left(e_{i}\otimes e_{j}\right)\right)$,
where $R$ is the universal $R$-matrix. Then either

\begin{enumerate}
\item $w=Id$ and $T_{w}\left(v_{a}\right)=v_{a}$;
\item $w=s_{1}$, $T_{w}=\sigma_{1}=\left(1\mapsto2\right)$, and 
\[
T_{w}\left(v_{a}\right)=\left(\sum_{i_{1},\cdots,i_{L}}t_{i_{1}}\cdots t_{i_{L}}\right)v_{p}\otimes\left(\sum_{i_{1},\cdots,i_{L}}s_{i_{1}}\cdots s_{i_{L}}\right)v_{p}\mathrm{;}
\]
\item $w=s_{1}$ and $T_{w}=\sigma_{1}^{-1}=\left(2\mapsto1\right)$ (N/A).
\end{enumerate}
\end{prop}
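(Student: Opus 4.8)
The plan is to treat the three cases exactly as in the diagonal Lemma~\ref{lem:BlockPerm2Primes}, but with the scalar factors $q_{kl}$ there replaced by the action of the universal $R$-matrix summands $s_i,t_i$. Item~(1) is immediate: $w=Id$ lifts to $T_w=Id$, so $T_w\left(v_a\right)=v_a$. Item~(3) is the formal inverse of item~(2): since $\sigma_1^{-1}$ braids the two blocks in the opposite sense it produces the inverse operator, and because it uses $\sigma^{-1}$ it never occurs in shuffle multiplication (hence the ``N/A''). So the entire content is item~(2).

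For item~(2), first I would recall that here $T_w=\sigma_1$ denotes the lift of the block transposition $s_1\in\sum_2$, i.e.\ the generalized flip $T_{w_{L,L}}$ of Proposition~\ref{prop:AssocAlgStuc} that carries $V^{\otimes L}\otimes V^{\otimes L}$ to itself while interchanging the two length-$L$ blocks. Since the braid group representation $T$ is induced by $\sigma=\tau\circ R$, this braid is precisely the induced braiding $\sigma_{V^{\otimes L},V^{\otimes L}}$ of the two copies of $v_p$. Writing $v_a=v_p\otimes v_p$, I would then compute $\sigma_{V^{\otimes L},V^{\otimes L}}\left(v_p\otimes v_p\right)$ by sweeping the second block to the left one letter at a time, applying the elementary rule $\sigma\left(e\otimes f\right)=\sum_i t_i f\otimes s_i e$ at each crossing, mirroring the ``transpose the rightmost $p$ to the left'' argument of Lemma~\ref{lem:BlockPerm2Primes}. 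Each letter of the second block, as it moves into the first slot, collects $t$-operators, while each displaced letter of the first block, landing in the second slot, collects $s$-operators; this is the source of the $t_{i_1}\cdots t_{i_L}$ on the left factor and $s_{i_1}\cdots s_{i_L}$ on the right.

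The hard part will be showing that the iterated crossings collapse to the stated product form with a \emph{single} shared family of indices $i_1,\cdots,i_L$, rather than the a priori much larger expression carrying one independent summation index per elementary crossing. This is exactly where quasitriangularity enters: I would invoke the hexagon identities $\left(\Delta\otimes Id\right)\left(R\right)=R_{13}R_{23}$ and $\left(Id\otimes\Delta\right)\left(R\right)=R_{13}R_{12}$ (equivalently, naturality of $\sigma$ together with $\sigma_{M,N\otimes P}=\left(Id\otimes\sigma_{M,P}\right)\left(\sigma_{M,N}\otimes Id\right)$ and its mirror) to recombine successive $R$-matrix summands into the iterated coproduct of a single summand, reducing the number of independent indices from one-per-crossing down to $L$.

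A secondary technical point to address is that for $U_q\left(\mathfrak{g}\right)$ the universal $R$-matrix is only a formal, infinite sum, so these manipulations must be read in the appropriate completion; since every $v_a$ lies in a fixed graded piece $T\left(V\right)\left(u\right)$ and each $T\left(V\right)\left(u\right)$ is finite-dimensional by Definition~\ref{def:uG}, each weight component of the action is a finite sum, which legitimizes the bookkeeping. Assembling these pieces then yields $T_w\left(v_a\right)=\left(\sum_{i_1,\cdots,i_L}t_{i_1}\cdots t_{i_L}\right)v_p\otimes\left(\sum_{i_1,\cdots,i_L}s_{i_1}\cdots s_{i_L}\right)v_p$, completing item~(2).
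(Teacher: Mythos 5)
Your proposal is correct and takes essentially the same route as the paper: the paper's entire proof is the one-sentence remark that the result is a ``trivial application of the expression of the universal $R$-matrix in terms of elements $s_{i}$ and $t_{i}$ to the framework we have developed above,'' i.e.\ the letter-by-letter sweep of Lemma \ref{lem:BlockPerm2Primes} with the scalars $q_{kl}$ replaced by $R$-matrix summands, which is precisely your item-(2) computation. The quasitriangularity/hexagon bookkeeping that collapses the one-index-per-crossing expression to the shared family $i_{1},\cdots,i_{L}$, and the graded-finiteness caveat for the formal infinite sum, are details the paper leaves entirely implicit (its displayed formula tacitly uses the same shared-index reading you adopt), so your write-up is simply a fuller, more careful version of the paper's argument.
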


\begin{proof}
This is trivial application of the expression of the universal $R$-matrix
in terms of elements $s_{i}$ and $t_{i}$ to the framework we have
developed above.
\end{proof}
To apply the same method of construction of bases in terms of Lyndon
words for the quantum group algebra as for quantum shuffle algebras,
we need the universal $R$-matrix to be diagonal, which corresponds
to diagonal braiding. We will assume that this is the case. For that,
it is sufficient that all $t_{i}$ and $s_{i}$ in the expression
for the universal $R$-matrix are elements of the Cartan subalgebra.

For a highest weight module of a quantum group algebra, it is useful
to write the weight vector expression as
\[
k_{\lambda}\cdot v=d_{\lambda}v=c_{\lambda}q^{\left(\lambda,v\right)}v
\]
where $\nu$ is an element of the weight lattice. \cite[p. 72]{jantzen}

Accordingly, we consider the specific form of the diagonal universal
$R$-matrix
\[
R=q^{\sum_{i}d_{ij}\tilde{k}_{\lambda_{i}}\otimes\tilde{k}_{\mu_{i}}}
\]
where Cartan subalgebra generators $k_{i}$ are formally identified
with $q^{\tilde{k}_{i}}$ \cite{khoroshin}. It is one of the possible
functions $f$ as described, for example, in \cite[ch. 3,7]{jantzen}.
Setting $q^{\tilde{k}_{\lambda_{i}}}e_{j}=q^{\alpha_{ij}}e_{j}$,
and $q^{\tilde{k}_{\mu_{i}}}e_{j}=q^{\beta_{ij}}e_{j}$, it acts on
an element $v\otimes w\in V\otimes W$ as 
\[
R\cdot\left(v\otimes w\right)=q^{\sum_{k}d_{ij}\alpha_{ki}\beta_{kj}}\left(v\otimes w\right)
\]

We now state the following trivial propositions:
\begin{prop}[Diagonal braiding on $V\otimes V$]
\textcolor{white}{\scriptsize{}T}

\begin{enumerate}
\item If all elements $t_{i}$ and $s_{i}$ in the expression {\small{}$R=\sum_{i}s_{i}\otimes t_{i}$
of the universal $R$-matrix are elements of the Cartan subalgebra,
then braiding in $V\otimes V$ is diagonal (i.e. given by $\sigma\left(e_{i}\otimes e_{j}\right)=q_{ij}\left(e_{j}\otimes e_{i}\right)$).}{\small\par}
\item {\small{}If additionally $V$ is a highest weight module, $q^{\tilde{k}_{\lambda_{i}}}e_{j}=q^{\alpha_{ij}}e_{j}$,
and $q^{\tilde{k}_{\mu_{i}}}e_{j}=q^{\beta_{ij}}e_{j}$ for all $i$
and $j$, then
\[
\sigma\left(e_{i}\otimes e_{j}\right)=q^{\sum_{k}d_{ij}\alpha_{ki}\beta_{kj}}\left(e_{j}\otimes e_{i}\right)
\]
}{\small\par}
\end{enumerate}
\end{prop}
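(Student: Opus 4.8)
The plan is to derive both statements directly from the defining relation $\sigma(v\otimes w)=\tau\bigl(R(v\otimes w)\bigr)$ and the expansion $R=\sum_i s_i\otimes t_i$ introduced just above, together with the elementary fact that elements of the Cartan subalgebra act diagonally on weight vectors. The starting point in both cases is to apply $R$ to a pair of basis vectors, obtaining $R(e_i\otimes e_j)=\sum_k (s_k e_i)\otimes(t_k e_j)$, and then to read off $\sigma$ by composing with the flip $\tau$.

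For part (1), the key observation is that each $s_k$ and each $t_k$ lies in the Cartan subalgebra; choosing the basis $\{e_i\}$ to consist of weight vectors, each $e_i$ is a simultaneous eigenvector for every Cartan element. Hence $s_k e_i=\gamma_{ki}e_i$ and $t_k e_j=\delta_{kj}e_j$ for suitable scalars $\gamma_{ki},\delta_{kj}$, so that $R(e_i\otimes e_j)=\bigl(\sum_k\gamma_{ki}\delta_{kj}\bigr)(e_i\otimes e_j)$ is a scalar multiple of $e_i\otimes e_j$ and no mixing of distinct basis vectors occurs. Applying $\tau$ then yields $\sigma(e_i\otimes e_j)=q_{ij}(e_j\otimes e_i)$ with $q_{ij}=\sum_k\gamma_{ki}\delta_{kj}$, which is exactly the asserted diagonal form.

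For part (2), I would specialize to the explicit diagonal $R$-matrix $R=q^{\sum_i d_{ij}\tilde{k}_{\lambda_i}\otimes\tilde{k}_{\mu_i}}$ and substitute the stated eigenvalue relations $q^{\tilde{k}_{\lambda_i}}e_j=q^{\alpha_{ij}}e_j$ and $q^{\tilde{k}_{\mu_i}}e_j=q^{\beta_{ij}}e_j$. This is precisely the computation already carried out in the preceding paragraph of the text, which gives $R(e_i\otimes e_j)=q^{\sum_k d_{ij}\alpha_{ki}\beta_{kj}}(e_i\otimes e_j)$; composing with $\tau$ then reads off the claimed scalar $q^{\sum_k d_{ij}\alpha_{ki}\beta_{kj}}$ in front of $e_j\otimes e_i$. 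Thus part (2) adds nothing beyond evaluating the specific $R$-matrix on weight vectors and flipping.

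The only point needing care — minor, consistent with the paper labelling these as trivial propositions — is the justification that the chosen basis vectors $e_i$ are common eigenvectors for all the relevant Cartan elements, so that the sum $\sum_k s_k\otimes t_k$ preserves each line $e_i\otimes e_j$ and the braiding remains strictly diagonal rather than merely upper-triangular. Once that is in place, both statements reduce to a direct substitution into the defining relation $\sigma=\tau\circ R$.
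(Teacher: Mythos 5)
Your proposal is correct and takes essentially the same route as the paper, which states this proposition as trivial and whose justification is precisely the computation in the preceding paragraph: expand $R=\sum_{i}s_{i}\otimes t_{i}$ on a pair of weight vectors, use that Cartan elements act by scalars so that $R$ preserves each line $e_{i}\otimes e_{j}$, and compose with the flip $\tau$ (with part (2) being the substitution of $R=q^{\sum_{i}d_{ij}\tilde{k}_{\lambda_{i}}\otimes\tilde{k}_{\mu_{i}}}$ and the stated eigenvalue relations). Your closing caveat --- that the basis $\left\{ e_{i}\right\} $ must consist of simultaneous eigenvectors of the relevant Cartan elements --- is exactly the standing assumption the paper leaves implicit, so nothing is missing.
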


\subsection{Bases in terms of Lyndon words}
\begin{prop}[Block permutation of two equal primes in case of a diagonal universal
$R$-matrix ]

Let $a\in S$ and $a=p^{2}$ its prime factorization. We denote the
string decomposition of $p$ as $\prod_{j=1}^{L}e_{j}$, where $L$
is its length. Let $w$ be in $\sum_{2}$, a subgroup of ``block
permutations'' that permutes the $p$'s among themselves. Let the
braiding in $V\otimes V$ be given by $\sigma\left(e_{i}\otimes e_{j}\right)=q_{ij}\left(e_{j}\otimes e_{i}\right)$,
and let $Q=\prod_{k,l\in\left[\left[1,L\right]\right]}q_{kl}$. Then
either

\begin{enumerate}
\item $w=Id$ and $T_{w}\left(v_{a}\right)=v_{a}$;
\item $w=s_{1}$, $T_{w}=\sigma_{1}=\left(1\mapsto2\right)$, and $T_{w}\left(v_{a}\right)=Qv_{a}$;
\item $w=s_{1}$, $T_{w}=\sigma_{1}^{-1}=\left(2\mapsto1\right)$, and $T_{w}\left(v_{a}\right)=Q^{-1}v_{a}$
(N/A).
\end{enumerate}
\end{prop}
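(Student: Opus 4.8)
The plan is to recognize this statement as the diagonal specialization of the general $R$-matrix computation given just above, which in turn coincides essentially verbatim with Lemma \ref{lem:BlockPerm2Primes}. The hypothesis here already postulates the diagonal braiding $\sigma(e_i\otimes e_j)=q_{ij}(e_j\otimes e_i)$, and the preceding proposition on diagonal braiding on $V\otimes V$ shows that this is precisely the form $\sigma$ takes once all $s_i$ and $t_i$ in the universal $R$-matrix lie in the Cartan subalgebra. So I would first invoke that proposition to reduce the assumption of a diagonal universal $R$-matrix to the scalar braiding relation, after which the computation of $T_w(v_a)$ is carried out exactly as in Lemma \ref{lem:BlockPerm2Primes}. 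Because $\sigma$ satisfies the braid equation, the Matsumoto lift $T_w$ is well defined independently of the chosen reduced expression, so the scalar factors I extract below are unambiguous.

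For the three cases I would argue as follows. Case (1) is immediate, since $w=\mathrm{Id}$ lifts to $T_w=\mathrm{Id}$ and hence fixes $v_a$. For case (2), with $w=s_1$ and $T_w=\sigma_1=(1\mapsto2)$, I would transpose the rightmost copy of $p$ past the leftmost copy one letter at a time. Writing $p=e_1\cdots e_L$ so that $v_a=(e_1\cdots e_L)\otimes(e_1\cdots e_L)$, moving the $m$-th letter of the second block leftward past every letter $e_k$ of the first block contributes the factor $\prod_{k=1}^{L}q_{km}$ by the diagonal braiding relation, each crossing being a single application of $\sigma$ to the adjacent pair $(e_k,e_m)$. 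Accumulating over all $m\in\left[\left[1,L\right]\right]$ yields $\prod_{k,m}q_{km}=Q$, so $T_w(v_a)=Qv_a$. Case (3) is the component-wise inverse of case (2), producing $Q^{-1}$; as noted in the discussion following Lemma \ref{lem:BlockPerm2Primes}, this case is marked N/A because only $\sigma$, and never $\sigma^{-1}$, arises in the Matsumoto section used to build the shuffle product.

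The only step requiring genuine care is the bookkeeping in case (2): I must verify that the lift $T_{s_1}$ of the block swap realizes exactly the $L^2$ crossings indexed by the ordered pairs $(k,m)\in\left[\left[1,L\right]\right]\times\left[\left[1,L\right]\right]$, each contributing precisely one factor $q_{km}$, with no spurious braidings. This is controlled by the observation that swapping two adjacent blocks of length $L$ reverses the relative order of every block-one letter with every block-two letter while preserving the internal order of each block, so the underlying permutation has exactly $L^2$ inversions and none within a block. The generalized flip is therefore realized by a reduced word of length $L^2$ whose Matsumoto lift inserts one positive generator $\sigma_i$ per inversion, giving one factor $q_{km}$ per crossing and nothing more. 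Once this is checked, the accumulated scalar collapses to $Q=\prod_{k,l\in\left[\left[1,L\right]\right]}q_{kl}$ and the proposition follows.
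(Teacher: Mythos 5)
Your proposal is correct and takes essentially the same route as the paper, which gives no separate proof here but relies on the argument of Lemma \ref{lem:BlockPerm2Primes}: the block swap is realized by moving each letter of the right-hand copy of $p$ leftward through the left-hand copy one adjacent crossing at a time, each crossing contributing one diagonal factor, so the total scalar is $Q=\prod_{k,l\in\left[\left[1,L\right]\right]}q_{kl}$, with the inverse case yielding $Q^{-1}$ and being inapplicable to the shuffle product. Your explicit check that the block swap has exactly $L^{2}$ inversions, none internal to a block, so the Matsumoto lift contributes precisely one factor per crossing, spells out a bookkeeping point the paper leaves implicit, but it is the same argument.
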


Assuming that the universal $R$-matrix is diagonal (i.e. we have
diagonal braiding) for quantum group algebra (or its subalgebra),
we can apply Proposition \ref{prop:BasesQSA} used for quantum shuffle
algebras to the quantum group algebra (resp. its subalgebra) verbatim.
Even though the proposition's formulation is the same as that of Proposition
\ref{prop:BasesQSA} (as can be expected), will restate it here for
text structure and reference purposes.
\begin{prop}[Bases of quantum group algebras in terms of Lyndon words]
\label{prop:BasesQGA}

Let $a\in S\left(u\right)$ and $a=\prod_{i=1}^{s}p_{i}^{n_{i}}$
be its unique prime factorization. We define $X_{a}=\prod_{i=1}^{s}v_{p_{i}}^{n_{i}}$,
where quantum multiplication is used between the terms of the form
$v_{p_{i}}$. Then:

\begin{enumerate}
\item $X_{a},a\in S\left(u\right)$ form a basis of $T\left(V\right)\left(u\right)$;
and
\item the change of basis with respect to $a$ is triangular, i.e. there
exist $\alpha_{ab}\in k$ such that $X_{a}=\sum_{a\le b}\alpha_{ab}v_{b}$;
\item setting for $1\le i\le s$ $p_{i}=\prod_{j=1}^{l_{i}}x_{ij}$ and
$Q_{i}=\prod_{k,l\in\left[\left[1,l_{i}\right]\right]}q_{x_{k}x_{l}}$,
we have $\alpha_{aa}=\prod_{j=1}^{s}\left(\left[n_{j}\right]_{Q_{j}}!\right)\ne0$
for all $Q_{i}\in k$ being different from unity, or $Q_{i}$'s indeterminate;
\item the $X_{a}$'s, $a\in S$, form a linear basis of $T\left(V\right)$;
\item the $v_{p}$'s, $p\in P$, form a polynomial basis for $T\left(V\right)$;
\item $X_{a}\cdot X_{b}=X_{a\cdot b}$ for $a,b\in S$.
\end{enumerate}
\end{prop}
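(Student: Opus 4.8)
The plan is to reduce this statement to Proposition \ref{prop:BasesQSA} on bases of quantum shuffle algebras, exploiting the standing assumption that the universal $R$-matrix is diagonal. Under that assumption the braiding on $V\otimes V$ takes exactly the form $\sigma(e_i\otimes e_j)=q_{ij}(e_j\otimes e_i)$ used throughout the quantum shuffle setting, so the entire apparatus built for $S_\sigma(V)$ — the shuffle product, the representation of $B_n$ on $V^{\otimes n}$, and the relation $X_a=\mathscr{B}_{(l_1,\ldots,l_s)}(v_a)$ with $v_a=\otimes_{i=1}^s v_{p_i}^{n_i}$ — becomes available without modification, and the six assertions can be proved by transcribing the corresponding arguments.

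First I would isolate the two structural inputs that the proof of Proposition \ref{prop:BasesQSA} actually consumes. The first is the purely combinatorial Theorem \ref{thm:TwAA}, asserting that $T_w(a)\ge a$ for every $(l_1,\ldots,l_s)$-shuffle $w$, with equality precisely for block permutations. Since Theorem \ref{thm:TwAA} is phrased in terms of the lexicographic order and the prime factorization alone, and its proof never invokes the particular scalars produced by the braiding, it transfers to the quantum group algebra setting \emph{verbatim}. The second input is the scalar evaluation of a block permutation; here I would invoke the just-proved block-permutation proposition for the diagonal $R$-matrix in place of Lemma \ref{lem:BlockPerm2Primes}, noting that both yield the identical factor $Q$ for a single transposition of two equal primes, and hence the identical power of $Q$ for a general block transposition.

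With these inputs in hand the six claims assemble as follows. Assertion (2) is immediate from $T_w(a)\ge a$: writing $X_a=\sum_w T_w(v_a)$ expresses $X_a$ as a combination of $v_b$ with $b\ge a$, which is the required triangularity. For assertion (3) I would reduce to a single prime power $a=p_1^{n_1}$ using the block-permutation characterization in Theorem \ref{thm:TwAA}, then run the same induction on $n_1$ as before: at each step the newly inserted prime contributes the geometric factor $\sum_{m=0}^{r}Q_1^m$, so the accumulated coefficient is the $Q_1$-factorial $[n_1]_{Q_1}!$, and across distinct primes the coefficients multiply to $\prod_j [n_j]_{Q_j}!$. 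Assertion (1) then follows because $\alpha_{aa}\ne 0$ forces the triangular change-of-basis matrix $(\alpha_{ab})$ to be invertible on each finite-dimensional graded piece $T(V)(u)$, so the $X_a$ span it. Finally (4), (5), and (6) are formal consequences of the $U$-grading and of the definition $X_a=\prod_i v_{p_i}^{n_i}$, exactly as in the shuffle case.

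The hard part will not be any single calculation but justifying that the transfer is genuinely \emph{verbatim}: I must check that nothing in the shuffle-algebra proof secretly relied on the Hopf-bimodule origin of $\sigma$ beyond its diagonal shape and the braid relation it satisfies. Concretely, this means verifying that the diagonal braiding coming from the universal $R$-matrix does satisfy the braid equation — so that the $B_n$-action, and therefore the operators $T_w$, are well-defined — and that $Q_j\ne 1$ for each $j$ under the stated hypothesis, which is precisely what keeps $[n_j]_{Q_j}!\ne 0$ and hence $\alpha_{aa}\ne 0$. Once diagonality and the braid relation are confirmed, no further obstruction remains, since every other ingredient is insensitive to whether the diagonal scalars arise from a matrix $(q_{ij})$ or from the $R$-matrix.
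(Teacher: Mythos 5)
Your proposal follows essentially the same route as the paper, which proves Proposition \ref{prop:BasesQGA} simply by asserting that, under the standing diagonal-braiding assumption on the universal $R$-matrix, Proposition \ref{prop:BasesQSA} applies verbatim (with the block-permutation proposition for the diagonal $R$-matrix playing the role of Lemma \ref{lem:BlockPerm2Primes}). Your added checks --- that Theorem \ref{thm:TwAA} is braiding-independent, that the diagonal braiding satisfies the braid equation so the operators $T_{w}$ are well-defined, and that $Q_{j}\ne1$ guarantees $\alpha_{aa}\ne0$ --- are exactly the points the paper's ``verbatim'' transfer implicitly relies on, so they strengthen rather than alter the argument.
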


\paragraph*{Notes on $\alpha_{aa}$}

As mentioned earlier, when constructing bases in terms of Lyndon words
for quantum shuffle algebras and quantum group algebras, we only deal
with $\sigma$ and not $\sigma^{-1}$.
\begin{enumerate}
\item If all $Q_{i}\in k$ are different from unity or indeterminate, then
$\left[n_{j}\right]_{Q_{j}}!$ is the Mahonian number such that
\[
\left[n\right]_{r}!=\prod_{j=1}^{n}\sum_{i=0}^{j-1}r^{i}=\prod_{j=1}^{n}\frac{1-r^{j}}{1-r}
\]
\item If some $Q_{i}\in k$ are unity, then can't use the geometric series
formula and have for respective Mahonian numbers $\left[n_{j}\right]_{Q_{j}}!$
that
\[
\left[n\right]_{r}!=\prod_{j=1}^{n}\sum_{i=0}^{j-1}r^{i}=\prod_{j=1}^{n}j=n!
\]
\item If the quantum group parameter $q$ is unity, then we have a nondeformed
universal enveloping algebra module, and all $Q_{i}$'s are unity.
\item The case of $q$ being a root of unity (other than $q=1$) has to
be addressed separately.
\end{enumerate}

\paragraph*{Algorithm for bases construction in terms of Lyndon words}

Similarly to classical quantum algebras and the general case of quantum
group algebras, Proposition \ref{prop:BasesQGA} implicitly gives
a method of bases construction in terms of Lyndon words. In Example
\ref{exa:EITLW}, we have expressed one $v_{a},a\in S$ in terms of
$X_{c}$'s, which is equivalent to expression as a polynomial of $v_{p}$'s,
where $p\in P$. Suppose that we have a linear basis of $T\left(V\right)$
or its subspace as a set of $v_{a}$'s, where $a\in S$. Then we can
express that basis using a polynomial basis of Lyndon words as follows:
(1) following Proposition \ref{prop:BasesQGA}, express each $v_{a}$
in the linear basis of $T\left(V\right)$ using shuffle multiplication
in terms of primes $v_{p},p\in P$; and (2) take the union of the
sets of applicable $v_{p}$'s, remembering to delete any duplicates
during the process.

\paragraph*{$U_{q}\left(\mathfrak{g}\right)$ structure and quantum shuffle multiplication}

The quantum shuffle multiplication is based on the natural representation
of the braid group on quantum group algebra on $T\left(V\right)$,
where $V$ is a quantum group algebra module. It is an additional
structure that is, as a multiplication, not compatible with the quantum
group algebra's coproduct. If we wish, can can define an additional
coproduct on the quantum group algebra that is compatible with the
quantum shuffle multiplication -- for example, by using the approach
of universal construction in the braid category.

\paragraph*{Application of the Mathematica function package}

Continuing to consider the case where the braiding is given in $V\otimes V$
by $\sigma\left(e_{i}\otimes e_{j}\right)=q_{ij}\left(e_{j}\otimes e_{i}\right)$,
where $V$ is a quantum group algebra module, we can directly apply
the Mathematica program discussed above to quantum shuffle multiplication
and construction of bases in terms of Lyndon words for quantum group
algebras.

\paragraph*{Scope of applicability to quantum group algebras}

The diagonal universal $R$-matrix condition is quite restrictive,
and we expect it to apply only to exceptional types of quantum group
algebras. This can be seen from the expression for the standard universal
$R$-matrix for $U_{q}\left(\mathfrak{g}\right)$ with assumption
that $\mathfrak{g}$ is of finite type:
\[
R_{q}=exp\left(q\sum_{i,j}\left(B^{-1}\right)_{ij}k_{i}\otimes k_{j}\right)\prod_{\beta}exp_{q_{\beta}}\left[\left(1-q_{\beta}^{-2}\right)e_{\beta}\otimes f_{\beta}\right]
\]
where the product is over all the positive roots of $\mathfrak{g}$,
and the order of the terms is such that $\beta_{r}$ appears to the
left of $\beta_{s}$ if $r\ge s$. \cite[Theorem 8.3.9]{charipressley}

\paragraph*{Applicability to positive and negative parts of quantum group algebras}

Continuing to assume that $\mathfrak{g}$ is if finite type, the positive
part $U_{q}^{+}\left(\mathfrak{g}\right)$ of the quantum group algebra
is a Nichols algebra. Restriction of the universal $R$-matrix of
$U_{q}\left(\mathfrak{g}\right)$ to $U_{q}^{+}\left(\mathfrak{g}\right)$
satisfies the diagonality condition, i.e. the braiding in $U_{q}^{+}\left(\mathfrak{g}\right)$
is diagonal \cite{dumas,takeuchi}. Fundamental results on this include
\cite{MRosso} and, in form an implicit discussion, \cite{lusztig}.
This diagonal braiding is given by $q_{ij}=q^{d_{i}a_{ij}}$ \cite{MRosso,dumas,takeuchi}.
Moreover, keeping in mind that the universal $R$-matrix and the corresponding
braiding are not unique, the results of constructing bases of $U_{q}^{+}\left(\mathfrak{g}\right)$
in terms of Lyndon words as described in this section and in \cite{MRosso}
are equivalent. Same applies to the negative part $U_{q}^{-}\left(\mathfrak{g}\right)$.

\paragraph*{Specialized and non-specialized quantum group algebras}

The non-restricted specialization of $U_{q}\left(\mathfrak{g}\right)$
is obtained by using a specific value of $q$ instead of the indeterminate.
For $q$ not being a root of unity, the restricted and non-restricted
specializations coincide. While the standard universal $R$-matrix
obtained for a non-specialized $U_{q}\left(\mathfrak{g}\right)$ using
\cite[Theorem 8.3.9]{charipressley} gives a well-defined endomorphism
for the specialized case when $q$ is not a root of unity, it does
not give an element of $U_{q}\otimes U_{q}$ due to fractional powers
of $q$ and due to the expression being an infinite sum. This issue
is not substantial for our purposes and can be addressed as described
in \cite[pp. 327-331]{charipressley}.

\subsection{A PBW-type basis analogy for $T\left(V\right)$}

Parts (4) and (5) of Proposition \ref{prop:BasesQGA} can be restated
to resemble a PBW-type bases theorem.
\begin{defn}[$\left(T^{j}\right)^{i}$ notation]

Define $\left(T^{j}\right)^{i}$ as $\left(j\mapsto j-i\right)\in B_{n}$
that permutes the $p\in P$ (or respective $v_{p}\in T\left(V\right)$)
from position $j$ in $a$ ($v_{a}$ resp.) to position $j-i$. In
the same context, let $I=\left[\left[1,n\right]\right]$.
\end{defn}

\begin{cor}[A PBW-type basis analogy for $T\left(V\right)$]
\textcolor{white}{\scriptsize{}T}

\begin{enumerate}
\item The set of elements of $T\left(V\right)\left(u\right)$ of the form
$T^{\gamma_{1}}\cdots T^{\gamma_{k}}v_{a}$ where $a\in S\left(u\right)$
, with $\gamma_{j}$ being a monotonically increasing sequence of
$k$ elements of $I$, i.e.
\[
\gamma_{1}\le\cdots\le\gamma_{k}
\]
and with $k$ any non-negative integer, is a linear basis of $T\left(V\right)\left(u\right)$.
\item The set of elements of $T\left(V\right)$ of the form $T^{\gamma_{1}}\cdots T^{\gamma_{k}}v_{p}$
where $p\in P$, with $\gamma_{j}$ being a monotonically increasing
sequence of $k$ elements of $I$, i.e.
\[
\gamma_{1}\le\cdots\le\gamma_{k}
\]
and with $k$ any non-negative integer, is a polynomial basis of $T\left(V\right)$.
\end{enumerate}
\end{cor}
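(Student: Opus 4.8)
The plan is to derive both parts from items (4) and (5) of Proposition \ref{prop:BasesQGA} by reparametrizing the bases already constructed there, rather than by proving independence from scratch. First I would record the scalar bookkeeping: under the diagonal braiding $\sigma(e_i\otimes e_j)=q_{ij}(e_j\otimes e_i)$ with every $q_{ij}$ invertible (for a quantum group module they are powers of $q\neq 0$), each generator $T^{\gamma_j}$, which shifts a prime block leftward exactly as in the definition of $(T^{j})^{i}$, sends a standard tensor $v_b$ to a \emph{nonzero} scalar multiple of the standard tensor obtained by performing that block-move. This is precisely the computation in Lemma \ref{lem:BlockPerm2Primes} and in its corollary on $n$ equal primes, extended to distinct blocks by the same leftward-sweep argument. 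Hence any word $T^{\gamma_1}\cdots T^{\gamma_k}v_a$ is a nonzero scalar times a single standard basis vector $v_b$, so whether such elements form a basis depends only on which $v_b$'s they hit and with what multiplicity.

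Next I would set up the combinatorial bijection that is the real content. Reading the statement so that it yields a basis, one fixes for each prime-multiset its canonical prime-sorted representative $a=\prod_{i=1}^{s}p_i^{n_i}$ with $p_1<\cdots<p_s$, and shows that the weakly increasing sequences $\gamma_1\le\cdots\le\gamma_k$ of block-moves are in bijection with the $(l_1,\ldots,l_s)$-shuffles of the prime blocks, hence with the distinct reorderings $b$ of the blocks of $a$. This is the inversion-table (Lehmer-code) encoding of a coset representative: every reordering is reached by sweeping blocks leftward past their predecessors, and requiring the target positions to increase weakly pins down a unique reduced move-sequence per shuffle. Theorem \ref{thm:TwAA} guarantees that within a fixed grading the blocks can only be permuted among themselves, so letting $a$ range over the prime-sorted words of $S(u)$ and $\gamma$ over all such monotone sequences produces each $b\in S(u)$ exactly once.

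Assembling these ingredients gives part (1): the chosen elements $T^{\gamma_1}\cdots T^{\gamma_k}v_a$ are nonzero scalar multiples of the standard basis $\{v_b : b\in S(u)\}$ of $T(V)(u)$, placed in bijection with it, and therefore form a basis. For part (2) I would specialize to primes $p\in P$: since the $v_p$'s form a polynomial basis by Proposition \ref{prop:BasesQGA}(5) and the shuffle product satisfies $X_a\cdot X_b=X_{a\cdot b}$ by Proposition \ref{prop:BasesQGA}(6), the monotone move-sequences applied to concatenations of $v_p$'s realize exactly the ordered monomials in the $v_p$'s, i.e. the polynomial basis written in the claimed PBW form. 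The main obstacle I anticipate is the bijection of the second paragraph: one must verify that the condition $\gamma_1\le\cdots\le\gamma_k$ selects \emph{exactly one} move-sequence per reordering, neither omitting reorderings nor double-counting, which is equivalent to identifying these sequences as the reduced words of the minimal-length coset representatives of $\Sigma_{n_1}\times\cdots\times\Sigma_{n_s}$ in the ambient symmetric group.
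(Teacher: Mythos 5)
Your strategy of deducing the corollary by reparametrizing the bases of Proposition \ref{prop:BasesQGA} is reasonable in outline, and your first step is sound: with diagonal braiding, every block-move sends a standard tensor $v_b$ to a nonzero scalar multiple of a standard tensor (Lemma \ref{lem:BlockPerm2Primes} and its corollary). The genuine gap is in your second paragraph, namely the claim that letting $a$ range over the prime-sorted words of $S\left(u\right)$ and $\gamma$ over the weakly increasing move-sequences ``produces each $b\in S\left(u\right)$ exactly once.'' By Proposition \ref{prop:FormUPF} the unique prime factorization of \emph{every} word is already sorted ($a=\prod_{j}p_{j}^{n_{j}}$ with $p_{1}<\cdots<p_{s}$), so ``the prime-sorted words of $S\left(u\right)$'' is all of $S\left(u\right)$: the $k=0$ terms alone already exhaust the standard basis $\left\{ v_{b}\,|\,b\in S\left(u\right)\right\}$, and every nontrivial move-sequence then contributes a further, generically distinct, scalar multiple of a vector already in the family, which is therefore linearly dependent rather than a basis. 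Concretely, take $X=\left\{ 1,2\right\}$ with $1<2$ and $u$ consisting of one $1$ and one $2$: then $S\left(u\right)=\left\{ 12,21\right\}$, the word $12$ factors as $1\cdot2$ while $21$ is itself prime, and the one nontrivial block-move on $v_{12}$ yields $q_{12}\,v_{21}$, so your family contains $v_{12}$, $v_{21}$ and $q_{12}v_{21}$ -- the word $21$ is hit twice, not once. The root of the error is the assumption that reordering the prime blocks of $a$ stays inside the same prime multiset: it does not, since the reordered concatenation has a different, coarser factorization ($2\cdot1=21$ is prime), so block-reorderings do not partition $S\left(u\right)$; and Theorem \ref{thm:TwAA} cannot repair this, because it describes the \emph{stabilizer} of $a$ (the shuffles $w$ with $T_{w}\left(a\right)=a$), not a transversal of $S\left(u\right)$.

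A second, smaller defect: in part (2) you identify the elements $T^{\gamma_{1}}\cdots T^{\gamma_{k}}v_{p}$ with ``ordered monomials in the $v_{p}$'s,'' but a braid-group word applied to a single standard tensor is a scalar multiple of a single standard tensor, never the \emph{sum} of shuffles that constitutes an ordered shuffle monomial $X_{a}$; moreover, for $a=p$ a single prime there are no nontrivial block-moves at all, so the monotone sequences are vacuous there. For comparison, the paper supplies no independent proof: the corollary is introduced with the remark that parts (4) and (5) of Proposition \ref{prop:BasesQGA} ``can be restated,'' i.e.\ the intended argument is nothing more than rewriting the bases $\left\{ X_{a}\right\}$ and $\left\{ v_{p}\right\}$ in the $\left(T^{j}\right)^{i}$ notation, resting on the triangularity and the nonvanishing of $\alpha_{aa}$ already established there. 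Your Lehmer-code idea is the right tool for a different task -- parametrizing the distinct arrangements of the blocks of a \emph{fixed} $a$, where it does select one reduced move-sequence per coset of the block-permutation subgroup -- but, as written, it cannot produce a linearly independent family indexed by $S\left(u\right)$.
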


\subsection{Case of $q$ as a root of unity}

\subsubsection{Restricted and non-restricted specializations}

For a quantum group $U_{q}\left(\mathfrak{g}\right)$, there are two
ways to specialize the group parameter $q$ to a root of unity $\epsilon$:
non-restricted and restricted, resulting in different algebras $U_{\epsilon}\left(q\right)$
and $U_{\epsilon}^{res}\left(q\right)$ respectively. It is usually
assumed that $\epsilon$ is the primitive $l$th root of unity, where
$l$ is odd and $l>d_{i}$ for all $i$ ($d_{i}$ are the coprime
positive integers such that the matrix $\left(d_{i}a_{ij}\right)$
is symmetric). Both $U_{\epsilon}\left(\mathfrak{g}\right)$ and $U_{\epsilon}^{res}\left(\mathfrak{g}\right)$
are not quasitriangular and do not have a universal $R$-matrix. \cite[p. 327-329]{charipressley}
\begin{enumerate}
\item However, it is possible to obtain matrix-valued solutions of the quantum
Yang-Baxter equation (QYBE) on representations of $U_{\epsilon}\left(\mathfrak{g}\right)$,
where the tensor product is commutative up to an isomorphism. In this
subcase, we can directly apply the method of constructing bases in
terms of Lyndon words to the solution of the QYBE.
\item This method of obtaining matrix-valued solutions of QYBE is specific
to $U_{\epsilon}\left(\mathfrak{g}\right)$ and is not applicable
to $U_{\epsilon}^{res}\left(\mathfrak{g}\right)$.
\end{enumerate}

\subsubsection{Solutions of QYBE in $U_{\epsilon}\left(\mathfrak{g}\right)$}

The following proposition is useful for obtaining matrix-valued QYBE
solutions in the non-restricted case of $U_{\epsilon}\left(\mathfrak{g}\right)$.
\begin{prop}[{Solutions of QYBE in $U_{\epsilon}\left(\mathfrak{g}\right)$ \cite[pp. 349-359]{charipressley}}]

Let $\left\{ V\left(u\right)\right\} _{u\in V}$ be a family of representations
of a Hopf algebra $A$, all with the same underlying vector space
$V$ and parametrized by the elements $v$ of some set $\mathcal{V}$,
such that:

for all $v_{1},v_{2}\in\mathcal{V}$, there is an isomorphism of representations
\[
I\left(v_{1},v_{2}\right):V\left(v_{1}\right)\otimes V\left(v_{2}\right)\rightarrow V\left(v_{2}\right)\otimes V\left(v_{1}\right)
\]

for all $v_{1},v_{2},v_{3}\in\mathcal{V}$, the only isomorphisms
of representations
\[
V\left(v_{1}\right)\otimes V\left(v_{2}\right)\otimes V\left(v_{3}\right)\rightarrow V\left(v_{1}\right)\otimes V\left(v_{2}\right)\otimes V\left(v_{3}\right)
\]
are the scalar multiples of identity.

Then, if $R=\tau\circ I$, where $\tau$ is the interchange of the
factors in the tensor product,
\[
R_{12}\left(v_{1},v_{2}\right)R_{13}\left(v_{1},v_{3}\right)R_{23}\left(v_{2},v_{3}\right)=cR_{23}\left(v_{2},v_{3}\right)R_{13}\left(v_{1},v_{3}\right)R_{12}\left(v_{1},v_{2}\right)
\]
where $c$ is a scalar (possibly depending on $v_{1}$, $v_{2}$,
and $v_{3}$).
\end{prop}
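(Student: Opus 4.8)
The plan is to prove the statement first in its braid (braiding) form and only at the end convert to the literal $R$-matrix form. Since each $I(v_i,v_j)$ is an isomorphism of representations by the first hypothesis, any composite built by iterating tensor-factor swaps is again an isomorphism of representations. I would assemble two such composites, each carrying $V(v_1)\otimes V(v_2)\otimes V(v_3)$ to $V(v_3)\otimes V(v_2)\otimes V(v_1)$: performing the swaps of adjacent factors in one order gives $\Phi_L=(I(v_2,v_3)\otimes\mathrm{Id})\circ(\mathrm{Id}\otimes I(v_1,v_3))\circ(I(v_1,v_2)\otimes\mathrm{Id})$, and performing them in the opposite order gives $\Phi_R=(\mathrm{Id}\otimes I(v_1,v_2))\circ(I(v_1,v_3)\otimes\mathrm{Id})\circ(\mathrm{Id}\otimes I(v_2,v_3))$. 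A direct check of the intermediate spaces shows that both have source $V(v_1)\otimes V(v_2)\otimes V(v_3)$ and target $V(v_3)\otimes V(v_2)\otimes V(v_1)$, and both are isomorphisms of representations.

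Next I would form $\Phi_R^{-1}\circ\Phi_L$, which is an isomorphism of representations from $V(v_1)\otimes V(v_2)\otimes V(v_3)$ to \emph{itself}. By the second hypothesis, every such self-isomorphism is a scalar multiple of the identity, so $\Phi_R^{-1}\circ\Phi_L=c\,\mathrm{Id}$ for some scalar $c=c(v_1,v_2,v_3)$; equivalently $\Phi_L=c\,\Phi_R$. This is exactly the braid relation, valid up to the scalar $c$, and the dependence of $c$ on the three parameters is automatic from this construction.

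Finally I would translate into the $R$-matrix form using $R=\tau\circ I$, hence $I=\tau\circ R$. Substituting $I(v_i,v_j)=\tau_{ij}\circ R_{ij}(v_i,v_j)$ into $\Phi_L=c\,\Phi_R$ and pushing all the flips $\tau_{ij}$ to one side by the standard permutation identities (e.g.\ $\tau_{ij}^2=\mathrm{Id}$ and $\tau_{12}R_{13}\tau_{12}=R_{23}$, with the parameters relabelled accordingly) converts both composites into operators on the single fixed space $V(v_1)\otimes V(v_2)\otimes V(v_3)$ and yields $R_{12}(v_1,v_2)R_{13}(v_1,v_3)R_{23}(v_2,v_3)=c\,R_{23}(v_2,v_3)R_{13}(v_1,v_3)R_{12}(v_1,v_2)$, which is the asserted QYBE up to the scalar $c$.

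The main obstacle is conceptual rather than computational. The key realization is that the rigidity hypothesis (2) collapses the a priori genuine question---are the two composite intertwiners literally equal?---into a one-dimensional one, because the space of self-intertwiners of $V(v_1)\otimes V(v_2)\otimes V(v_3)$ is forced to be spanned by the identity. This is precisely what produces the scalar $c$ and explains why equality can be expected only projectively; it is also the reason the argument is specific to $U_{\epsilon}\left(\mathfrak{g}\right)$, where such irreducibility/rigidity of triple tensor products is available. The remaining work---the $\tau$-bookkeeping converting the braid relation into the literal QYBE---is routine but must be carried out carefully to track which flip acts on which pair of factors.
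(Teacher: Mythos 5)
Your proposal is correct: the paper itself states this proposition without proof, citing \cite[pp.~349--359]{charipressley}, and your argument --- composing the two adjacent-swap intertwiners $\Phi_L,\Phi_R : V\left(v_1\right)\otimes V\left(v_2\right)\otimes V\left(v_3\right)\rightarrow V\left(v_3\right)\otimes V\left(v_2\right)\otimes V\left(v_1\right)$, applying the rigidity hypothesis to the self-intertwiner $\Phi_R^{-1}\circ\Phi_L$ to get $\Phi_L=c\,\Phi_R$, and then pushing the flips through via $\tau$-conjugation identities --- is exactly the standard argument given in that reference. The only cosmetic point is that your bookkeeping naturally yields $R_{23}R_{13}R_{12}=c\,R_{12}R_{13}R_{23}$, which is the stated identity with $c$ replaced by $c^{-1}$; since $c$ is an arbitrary nonzero scalar (nonzero because $\Phi_L$ is an isomorphism), this is immaterial.
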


Since the intertwiners $I$ are only determined up to a scalar multiple,
it may be possible to normalize them so that $c=1$.

\section{Discussion}

We have derived a method to construct bases of positive (negative)
parts $U_{q}^{+}\left(\mathfrak{g}\right)$ ($U_{q}^{-}\left(\mathfrak{g}\right)$
resp.) of quantum group algebras using Lyndon words (primes). We have
examined the case of quantum parameter $q$ being a root of unity.
A secondary result is that we have developed a Wolfram Mathematica
package that performs a number of relevant operations, including quantum
shuffle multiplication and construction of bases in terms of Lyndon
words for quantum group algebras.

We have founded the bases construction method on classical shuffle
algebra \cite{Radford} and quantum shuffle algebra \cite{MRosso}
theory. In this work, we have attempted to balance independent perspective
and coherence with these primary references. We found that our end
result for quantum group algebras agrees with that in \cite{MRosso}.
On the one hand, this limits the novelty of our work, but on the other,
it validates it.

The Mathematica package's functionality is limited to the concrete
case of $X=\mathbb{N}$, but can be easily extended to the general
case of any totally ordered set. The memory requirement of its current
implementation is roughly proportional to the factorial of the length
of a word, and all calculations are done in random-access memory.
To address this issue, one can optimize the source code, perform the
calculations piecewise, and/or store interstitial calculation results
in a file.

Interesting directions for more specific research include (1) determining
whether our bases construction method may have broader applications
for specific types of Kac-Moody algebra $\mathfrak{g}$ than as detailed
here, (2) fully developing the approach toward diagonal braiding using
the 
\[
T_{w}\left(v_{a}\right)=\left(\sum_{i_{1},\cdots,i_{L}}t_{i_{1}}\cdots t_{i_{L}}\right)v_{p}\otimes\left(\sum_{i_{1},\cdots,i_{L}}s_{i_{1}}\cdots s_{i_{L}}\right)v_{p}
\]
 expression, and (3) researching the case in which $q$ is a root
of unity in more depth, including cyclic representations of $U_{\epsilon}\left(\mathfrak{g}\right)$.
With some extension of functionality, performance optimization, and
thorough documentation, the Mathematica package can be shared publicly
by means of a repository and accessed by practitioners.

This work details theory of shuffle algebras and bases construction
in terms of Lyndon words (primes) for classical and quantum shuffle
algebras. We have applied this theory to positive (negative) parts
of quantum group algebras, including the case of quantum parameter
$q$ being a root of unity.

This thesis (mémoire de stage), along with the Mathematica package,
may be useful to graduate students and researchers familiarizing themselves
with the topic. 

\bibliographystyle{amsplain}
\nocite{*}
\bibliography{C:/Users/ereme/OneDrive/Documents/UPMC/Stage}

\end{document}